\documentclass[11pt,a4paper]{amsart}
\usepackage[left=3.5cm,right=3.5cm,top=3.5cm,bottom=3.5cm]{geometry}
\usepackage{enumerate}
\usepackage{leftidx}
\usepackage{mathtools}
\usepackage{amsmath,amscd,amssymb,amsthm}
\usepackage[arrow,matrix]{xy}
\usepackage[OT2,T1]{fontenc}
\usepackage{tikz}
\usepackage{graphicx}
\usepackage{hyperref}


\DeclareMathOperator{\gal}{Gal}

\DeclareMathOperator{\im}{Im}
\DeclareMathOperator{\aut}{Aut}
\DeclareMathOperator{\ab}{ab}
\DeclareMathOperator{\sh}{sh}

\theoremstyle{definition}
\newtheorem{definition}{Definition}[section]
\newtheorem{example}[definition]{Example}
\newtheorem{remark}[definition]{Remark}
\newtheorem*{remark*}{Remark}

\theoremstyle{plain}
\newtheorem{theorem}[definition]{Theorem}
\newtheorem{corollary}[definition]{Corollary}
\newtheorem{lemma}[definition]{Lemma}
\newtheorem{proposition}[definition]{Proposition}
\newtheorem{conjecture}[definition]{Conjecture}
\newtheorem*{theorem*}{Theorem}
\newtheorem*{theoremA}{Theorem A}
\newtheorem*{theoremB}{Theorem B}
\newtheorem*{theoremC}{Theorem C}

\newcommand{\F}{{ \mathbb F }}
\newcommand{\N}{{ \mathbb N }}
\newcommand{\Q}{{ \mathbb Q }}
\newcommand{\Z}{{ \mathbb Z }}
\newcommand{\R}{{ \mathbb R }}

\author[A. Ferraguti]{Andrea Ferraguti}
\address{Max Planck Institute for Mathematics\\
Vivatsgasse 7\\
53111 Bonn, Germany\\
}
\address{DICATAM, Università degli Studi di Brescia\\
via Branze 43\\
I-25123 Brescia, Italy\\
}
\email{and.ferraguti@gmail.com}

\author[C. Pagano]{Carlo Pagano}
\address{Max Planck Institute for Mathematics\\
Vivatsgasse 7\\
53111 Bonn, Germany\\
}
\email{carlein90@gmail.com}

\author[D. Casazza]{Daniele Casazza}
\address{ICMAT, Campus de Cantoblanco, 13-15 Calle de Nicol\'as Cabrera, 28049 Madrid, Spain}
\email{daniele.casazza@icmat.es}

\title[The inverse problem for arboreal Galois representations]{The inverse problem for arboreal Galois representations of index two}
\keywords{Arithmetic dynamics, arboreal Galois representations, inverse problems.}
\subjclass[2010]{Primary  37P55, 20E08, 20E18; Secondary 14G05.}

\begin{document}

\begin{abstract}
This paper introduces a systematic approach towards the inverse problem for arboreal Galois representations of finite index attached to quadratic polynomials. Let $F$ be a field of characteristic $\neq 2$ and $f$ be a monic, quadratic polynomial in $F[x]$. Let $\rho_f$ be the arboreal Galois representation associated to $f$, taking values in the group $\Omega_{\infty}$ of automorphisms of the infinite, rooted, regular binary tree. We give a complete description of the maximal closed subgroups of each closed subgroup of index at most two of $\Omega_{\infty}$, and then we show how this description can be naturally given in terms of linear relations modulo squares between certain universal functions evaluated in elements of the post-critical orbit of $f$. We use such description in order to derive necessary and sufficient criteria for $\im(\rho_f)$ to be a given index two subgroup of $\Omega_\infty$. These criteria depend exclusively on the arithmetic of the critical orbit of $f$. Afterwards, we prove that if $\phi=x^2+t\in\Q(t)[x]$, then there exist exactly five distinct subgroups of index two of $\Omega_{\infty}$ that can appear as images of $\rho_{\phi_{t_0}}$ for infinitely many $t_0\in\Q$, where $\phi_{t_0}$ is the specialized polynomial. We show that two of these groups appear infinitely often, by providing explicit examples, and if Vojta's conjecture over $\Q$ holds true, then so do the remaining three. Finally, we give an explicit description of the derived series of each subgroup of index two. Using this, we introduce a sequence of combinatorial invariants for subgroups of index two. With a delicate use of these invariants we are able to establish that subgroups of index at most two of $\Omega_\infty$ are pairwise non-isomorphic as topological groups, a result of independent interest. This implies, in particular, that the five aforementioned groups are pairwise distinct topological groups, and therefore yield five genuinely different instances of the infinite inverse Galois problem over $\Q$. 
\end{abstract}

\maketitle

\section{Introduction}

Let $F$ be a field of characteristic $\neq 2$, fix a separable closure $F^{\text{sep}}$, and let $f\in F(x)$ be a quadratic rational function. Assume that $f^{(n)}$ has $2^n$ distinct zeroes for every $n$, where $f^{(n)}$ denotes the $n$-fold iteration, and let $\rho_f\colon \gal(F^{\text{sep}}/F)\to \Omega_{\infty}$ be the associated arboreal Galois representation, where $\Omega_{\infty}$ is the automorphism group of the infinite, rooted, regular binary tree (cf.\ Section \ref{translations} for the precise definition). It is a central problem in arithmetic dynamics to compute the image of such representations. The main conjecture in the field (see \cite[Conjecture 3.11]{jones2}) predicts that the image of $\rho_f$ has finite index in $\Omega_{\infty}$, provided that $f$ does not satisfy a set of special conditions. Following the philosophy that sees arboreal representations as analogues of Galois representations attached to Tate modules of abelian varieties, the above conjecture would be a dynamical avatar of Serre's open image theorem.

In general, it is an extremely hard task to compute $\im(\rho_f)$. The first results on this topic date back to Odoni \cite{odoni1},\cite{odoni2}, although they did not involve the modern language of arboreal Galois representations. When $f$ is a monic, quadratic polynomial with rational coefficients, Stoll \cite{Stoll} proved that $\rho_f$ is surjective if and only if the adjusted post-critical orbit of $f$ is a linearly independent set in $\Q^{\times}/(\Q^{\times})^2$, and then constructed infinite families of examples of polynomials of the form $x^2+a$ whose post-critical orbit satisfies such condition.

Nowadays, several other results are available for quadratic polynomials and rational functions over various ground fields, such as \cite{boston,ferra3,ferra2,hindes3,jones4,jones5}. Stoll's work has been generalized in various directions (see for example \cite{hindes2},\cite{jones4}), but always with the goal of proving surjectivity for the arboreal representation attached to a polynomial over certain specific base fields. On the other hand, it is a very natural question to ask which conjugacy classes of closed subgroups of $\Omega_{\infty}$ can arise as images of arboreal representations attached to quadratic polynomials. \footnote{Here $\Omega_{\infty}$ denotes the automorphism group of a fixed model for the tree. As explained in Section \ref{basics}, for each quadratic polynomial one has a well-defined conjugacy class of images of closed subgroups of $\Omega_{\infty}$.} We shall refer to this as the \emph{inverse problem for arboreal Galois representations}. 

In this paper, we address this problem for closed subgroups of $\Omega_{\infty}$ of index two (that are precisely maximal subgroups, since $\Omega_\infty$ is a pro-2-group). Our approach is based on a systematic classification of the maximal closed subgroups of each index two closed subgroup of $\Omega_{\infty}$ that translates very naturally into a classification based on linear dependence relations among (universal functions evaluated in) elements of the post-critical orbit of $f$. The strength of our approach is that it is completely algebraic, so that it does not depend in any way on the arithmetic of the ground field $F$. This is reflected in one of our side results, namely Corollary \ref{PCF_polynomials}. Here we show that a monic quadratic polynomial over a field of characteristic $\neq 2$ that is post-critically finite or having coefficient in a field $F$ such that $F^{\times}/(F^{\times})^2$ is a finite group, has an arboreal representation of infinite index. This was already known for post-critically finite rational functions over global fields (see \cite[Theorem 3.1]{jones2}) (with some condition relating the degree of the function and the characteristic of the field), but the proof critically relies on a deep result of Ihara that makes use of the fact that the ground field is a global one.

As we show in Section \ref{subgroups of index 2, abstract}, maximal subgroups of $\Omega_\infty$ are parametrized by the set of non-zero vectors $\underline{a}$ in $\F_2^{(\Z_{\geq 0})}$, the $\F_2$-vector space of maps $\Z_{\geq 0}\to \F_2$ that attain non-zero values at finitely many points. We denote by $M_{\underline{a}}$ the maximal subgroup of $\Omega_\infty$ corresponding to the non-zero vector $\underline{a}$. Our first main result, Theorem A below, establishes necessary and sufficient conditions for $\im(\rho_f)$ to coincide with a given $M_{\underline{a}}$. Recall that the adjusted post-critical orbit of a polynomial $f=(x-\gamma)^2-\delta$ is the sequence defined by $c_0\coloneqq \delta$, $c_n\coloneqq f^{(n+1)}(\gamma)$, for $n\geq 1$.\footnote{The indexing is shifted by 1 with respect to the usual one in order to keep consistence with the notation that will be introduced in Section \ref{subgroups of index 2, abstract}.}

\begin{remark*}
In order to emphasize the importance of the critical point of a quadratic polynomial for the results of this paper we will, in accordance with most of the existing literature on the topic, write monic quadratic polynomials in the form $(x-\gamma)^2-\delta$ (notice that any monic, quadratic polynomial over a field of characteristic different from 2 can be written in this form). Moreover, all results of this paper that involve the adjusted post-critical orbit are valid for arboreal representation with an arbitrary basepoint $z\in F$. This means that instead of considering the tree of roots of the $f^{(n)}$'s, one considers the tree of roots of $f^{(n)}-z$, where $z\in F$ is fixed. In order to ease the exposition, we have not stated our results in this generality; instead we took $z=0$ everywhere. Nevertheless, all arguments require no non-trivial modification, and hold for any $z$ up to replacing the adjusted post-critical orbit $\{c_n\}_{n\geq 0}$ of $f$ with the sequence $\{c_0+z,c_1-z,\ldots,c_n-z,\ldots\}$.
\end{remark*}

Recall that if $f,g\in F[x]$ then $g$ is called \emph{$f$-stable} if $g\circ f^{(n)}$ is irreducible for every $n\geq 1$. If $f$ is $f$-stable then we just call it \emph{stable}.

\begin{theoremA} \label{theoremA}
 Let $F$ be a field of characteristic $\neq 2$, $f=(x-\gamma)^2-\delta\in F[x]$ and $\{c_n\}_{n\geq 0}$ be its adjusted post-critical orbit. Let $\rho_f$ be the associated arboreal representation. Finally, let $\underline{a}=(a_i)_{i\geq 0}\in\F_2^{(\Z_{\geq 0})}$ be a non-zero vector.
 \begin{enumerate}
  \item If $\underline{a}\neq (1,0,\ldots,0,\ldots)$, then $\im(\rho_f)=M_{\underline{a}}$ if and only if one of the following two conditions is satisfied:
  \begin{enumerate}[a)]
  \item $a_0=1$, $\prod_{i\geq 0}c_i^{a_i}\in {F^{\times}}^2$ and for every non-zero vector $\underline{a}'=(a_i')_{i\geq 0}\in\F_2^{(\Z_{\geq 0})}$ different from $\underline{a}$, we have that $\prod_{i\geq 0}c_i^{a_i'}\notin {F^{\times}}^2$.
  \item $a_0=0$, $\prod_{i\geq 0}c_i^{a_i}\in {F^{\times}}^2$, for every non-zero vector $\underline{a}'=(a_i')_{i\geq 0}\in\F_2^{(\Z_{\geq 0})}$ different from $\underline{a}$ we have that $\prod_{i\geq 0}c_i^{a_i'}\notin {F^{\times}}^2$ and the element $\widetilde{c}_{\underline{a}}\in F^{\times}$ defined in Proposition \ref{The abelian "additional" extension} does not belong to the span of $\{c_n\}_{n\geq 0}$ in the $\F_2$-vector space $F^{\times}/{F^{\times}}^2$.
  \end{enumerate}
  If one of the two above conditions hold, then in particular $f$ is stable.
  \item If $\underline{a}=(1,0,\ldots,0,\ldots)$, then the following are equivalent:
\begin{enumerate}[i)]
\item $\im(\rho_f)=M_{\underline{a}}$;
\item the set
  $$\{ c_1+\gamma\pm u,c_2-\gamma\pm u,\ldots,c_n-\gamma\pm u,\ldots\}$$
  is linearly independent in $F^{\times}/(F^{\times})^2$, where $u=\sqrt{\delta}\in F$.
\end{enumerate}  
    If one of the two above equivalent condition holds, then $f$ factors into a product of two $f$-stable linear factors.
 \end{enumerate}

\end{theoremA}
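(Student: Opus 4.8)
The plan is to reduce Theorem~A to the purely group-theoretic classification carried out in Section~\ref{subgroups of index 2, abstract} together with its translation into square classes of elements of $F^\times$; the remainder is bookkeeping. The organizing principle is the elementary remark that, $\Omega_\infty$ being a pro-$2$ group, every proper closed subgroup of the index-two subgroup $M_{\underline{a}}$ lies in a maximal (hence index-two) closed subgroup of $M_{\underline{a}}$, so that $\im(\rho_f)=M_{\underline{a}}$ if and only if $\im(\rho_f)\subseteq M_{\underline{a}}$ and $\im(\rho_f)$ is contained in no maximal closed subgroup of $M_{\underline{a}}$. Each such containment is detected by a quadratic character of $\gal(F^{\mathrm{sep}}/F)$, and the whole proof is a matter of identifying these characters and reading off the corresponding classes in $F^\times/(F^\times)^2$.

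For the membership $\im(\rho_f)\subseteq M_{\underline{a}}$: writing $M_{\underline{a}}=\ker\chi_{\underline{a}}$ for the continuous character $\chi_{\underline{a}}\colon\Omega_\infty\to\F_2$ attached to $\underline{a}\in\F_2^{(\Z_{\geq 0})}\cong(\Omega_\infty^{\mathrm{ab}})^\vee$, the computation of the quadratic subextensions inside the tower of splitting fields of the $f^{(n)}$ (the same computation underlying Stoll's surjectivity criterion) identifies $\chi_{\underline{a}}\circ\rho_f$ with the quadratic character cut out by $\prod_{i\geq 0}c_i^{a_i}$ modulo squares, whence $\im(\rho_f)\subseteq M_{\underline{a}}\iff\prod_{i\geq 0}c_i^{a_i}\in(F^\times)^2$; for $\underline{a}=(1,0,\ldots)$ this reads $c_0=\delta\in(F^\times)^2$, i.e.\ $u=\sqrt\delta\in F$ and $f=(x-\gamma-u)(x-\gamma+u)$. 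For $\underline{a}\neq(1,0,\ldots)$ I would then invoke from Section~\ref{subgroups of index 2, abstract} that the maximal closed subgroups of $M_{\underline{a}}$ come in two families. The first consists of the $M_{\underline{a}}\cap M_{\underline{a}'}$ with $\underline{a}'$ nonzero and distinct from $\underline{a}$ (the vectors $\underline{a}'$ and $\underline{a}+\underline{a}'$ yielding the same subgroup); as $\chi_{\underline{a}}\circ\rho_f$ is already trivial, one has $\im(\rho_f)\subseteq M_{\underline{a}}\cap M_{\underline{a}'}$ exactly when $\prod_i c_i^{a_i'}\in(F^\times)^2$, so avoiding this entire family is precisely the clause ``$\prod_i c_i^{a_i'}\notin(F^\times)^2$ for every nonzero $\underline{a}'\neq\underline{a}$''. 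The second family is nonempty precisely when $a_0=0$ (this is where the structure of $M_{\underline{a}}$ departs from that of $\Omega_\infty$, giving rise to a continuous character of $M_{\underline{a}}^{\mathrm{ab}}$ not restricted from $\Omega_\infty^{\mathrm{ab}}$), its members being parametrized by $\F_2^{(\Z_{\geq 0})}$, and Proposition~\ref{The abelian "additional" extension} identifies the class attached to the member labelled $\underline{\epsilon}$ as $\widetilde{c}_{\underline{a}}\cdot\prod_i c_i^{\epsilon_i}$ modulo squares; hence avoiding the entire second family is exactly ``$\widetilde{c}_{\underline{a}}\notin\langle c_i:i\geq 0\rangle$ in $F^\times/(F^\times)^2$'', which is the extra clause of b), while no such clause is needed when $a_0=1$. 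Putting these together proves both directions of part~(1). For its ``in particular'', one uses that $M_{\underline{a}}$ with $\underline{a}\neq(1,0,\ldots)$ acts transitively at every level of the tree (an index-two subgroup of $\Omega_\infty$ is intransitive at some level only if it coincides with the stabilizer $M_{(1,0,\ldots)}$ of the two level-one subtrees), so $\im(\rho_f)=M_{\underline{a}}$ forces every $f^{(n)}$ to be irreducible, i.e.\ $f$ stable.

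Part~(2) must be handled on its own because $M_{(1,0,\ldots)}$ is the stabilizer of the two level-one vertices, hence $M_{(1,0,\ldots)}\cong\Omega_\infty\times\Omega_\infty$, which has far more maximal closed subgroups (the hyperplanes of $\Omega_\infty^{\mathrm{ab}}\times\Omega_\infty^{\mathrm{ab}}$) than a generic $a_0=1$ group. Realizing the action of $\gal(F^{\mathrm{sep}}/F)$ on the two subtrees rooted at the roots $\gamma\pm u$ of $f$ as a pair $(\rho^+,\rho^-)$ of arboreal representations attached to the pointed systems $(f,\gamma+u)$ and $(f,\gamma-u)$, one has $\im(\rho_f)=\Omega_\infty\times\Omega_\infty$ if and only if $\rho^+$ and $\rho^-$ are each surjective and the associated towers are linearly disjoint over $F$; the latter reduces, by a Goursat-type argument together with the fact that any nontrivial quotient of $\Omega_\infty$ has nontrivial abelianization, to the absence of a common quadratic subextension. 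Applying the Stoll-type surjectivity criterion to each of $\rho^\pm$ (whose level-$n$ discriminant data are expressible through the post-critical orbit, which is what produces the shift in indexing and the two signs $\pm u$) and then combining them, after removing redundancies via the multiplicative relations among the orbit elements (notably that the product of the level-$n$ data of the two subtrees is again an element of the orbit), yields exactly the linear independence asserted in ii). Finally, the two $M_{(1,0,\ldots)}$-orbits on the level-$n$ vertices of the tree are the root sets of the degree-$2^{n-1}$ factors $(x-\gamma\mp u)\circ f^{(n-1)}$ of $f^{(n)}$, and transitivity on each orbit is the irreducibility of the corresponding factor, so $\im(\rho_f)=M_{(1,0,\ldots)}$ forces both $x-\gamma-u$ and $x-\gamma+u$ to be $f$-stable.

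I expect the main work, and the place where Section~\ref{subgroups of index 2, abstract} and Proposition~\ref{The abelian "additional" extension} carry the weight, to be the precise dictionary invoked above: computing, for each maximal closed subgroup of $M_{\underline{a}}$, the exact class in $F^\times/(F^\times)^2$ through which the corresponding quotient of $\gal(F^{\mathrm{sep}}/F)$ factors, and in particular proving that the only genuinely new class that can arise is the single class $\widetilde{c}_{\underline{a}}$ and that it arises exactly when $a_0=0$. Once that dictionary is in place, Theorem~A is a matter of matching the conditions.
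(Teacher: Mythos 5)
Your proposal is correct and follows essentially the same route as the paper: containment detected by identifying $\chi_{\underline{a}}\circ\rho_f$ with the square class of $\prod_i c_i^{a_i}$ (Corollary \ref{when you are in Ma}), maximality via the description of the index-two subgroups of $M_{\underline{a}}$ coming from $M_{\underline{a}}^{\mathrm{ab}}$ (Theorem \ref{Describing abelianized Ma}), with the extra family, cut out by $\widetilde{c}_{\underline{a}}$ times orbit elements, present exactly when $a_0=0$ since for $a_0=1$ the new character has order $4$ and its square is a restricted character, stability from transitivity of $M_{\underline{a}}$ on every level (Lemma \ref{nontransitive}), and part (2) via $M_{(1,0,\ldots,0,\ldots)}\cong\Omega_{\infty}^2$, Stoll's criterion with basepoints $\gamma\pm u$, and detection of linear disjointness of the two towers at the quadratic level. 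This is precisely the argument of Theorems \ref{Classification of realizing stable Ma} and \ref{nonstable} together with Propositions \ref{field theoretic description of phi,tildephi} and \ref{The abelian "additional" extension}.
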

The element $\widetilde{c}_{\underline{a}}$ mentioned in case 1b) is the evaluation of a universal polynomial $g_{\underline{a}}\in\Z[X_0,\ldots,X_{n-1},Y]$ in the point $\left(c_0,\ldots,c_{n-1},\sqrt{\prod_{i\geq 0}c_i^{a_i}}\right)$, where $n$ is the largest index such that $a_n\neq 0$. Although the value $\widetilde{c}_{\underline{a}}=g_{\underline{a}}\left(c_0,\ldots,c_{n-1},\sqrt{\prod_{i\geq 0}c_i^{a_i}}\right)$ depends on a choice of the square root, the linear independence of $\widetilde{c}_{\underline{a}}$ from $\{c_n\}_{n\geq 0}$ does not (cf.\ Proposition \ref{The abelian "additional" extension}).

The key ingredient to prove the above theorem is a detailed analysis of the abelianization of subgroups of index two of $\Omega_\infty$. Products of elements of the adjusted post-critical orbit yield quadratic extensions of $F$ contained in the direct limit of the splitting fields of the $f^{(n)}$'s. When $\rho_f$ is surjective, these are all the quadratic subextensions. We show that when $f$ is irreducible and $\im(\rho_f)$ has index two in $\Omega_\infty$, one can describe explicitly all such quadratic extensions, with a dichotomy of cases that is reflected by conditions 1a) and 1b) of Theorem A. It is worth noticing that the post-critical orbit contains yet again all information to decide whether $\rho_f$ has image of index two, because also the element $\widetilde{c}_{\underline{a}}$ mentioned in point 1b) is constructed using only elements of the post-critical orbit.

\vspace{3mm}

The second part of the paper focuses on polynomials with rational coefficients. For a polynomial $f\in \Q(t)[x]$ and a rational number $t_0$, we denote by $f_{t_0}$ the specialized polynomial in $\Q[x]$. Let $\phi=x^2+t\in \Q(t)[x]$. One can ask which values $t_0\in \Q$ yield a specialized polynomial $\phi_{t_0}\in \Q[x]$ whose arboreal representation has index two image. In Proposition \ref{subgroups}, we prove that there exist exactly five maximal subgroups $\mathcal G_1,\ldots,\mathcal G_5$ of $\Omega_{\infty}$ that can appear as $\im(\rho_{\phi_{t_0}})$ for infinitely many $t_0$, and we describe the vectors of $\F_2^{(\Z_{\geq 0})}$ to which these subgroups correspond. It will follow from our results in Section \ref{proofC} that the $\mathcal G_i$'s are pairwise non-isomorphic topological groups.

Let $\psi\coloneqq x^2-1-t^2\in \Q(t)[x]$ and $\displaystyle \vartheta\coloneqq x^2+\frac{1}{t^2-1}\in \Q(t)[x]$. These have the property that $\im(\rho_\psi)\subseteq \mathcal G_1$ and $\im(\rho_\vartheta)\subseteq \mathcal G_2$, and hence $\im(\rho_{\psi_{t_0}})\subseteq \mathcal G_1$ and $\im(\rho_{\vartheta_{t_0}})\subseteq \mathcal G_2$ for every $t_0\in \Q$. The second main result of the paper is the following.
\begin{theoremB}
Let $\phi,\psi,\vartheta$ be as above.
\begin{enumerate}
 \item Let $t_0\in 2\Z\setminus\{0\}$. Then:
 $$\im(\rho_{\psi_{t_0}})=\im\left(\rho_{\phi_{-1-t_0^2}}\right)=\mathcal G_1,$$
 and consequently $\im(\rho_\psi)=\mathcal G_1$.
 \item Let $t_0\in 2\Z\setminus\{0\}$. Then:
 $$\im(\rho_{\vartheta_{t_0}})=\im\left(\rho_{\phi_{\frac{1}{t_0^2-1}}}\right)=\mathcal G_2,$$
 and consequently $\im(\rho_\vartheta)=\mathcal G_2$.
 \item Assume Vojta's conjecture over $\Q$ (see Subsection \ref{sub:vojta} for the precise statement), and let $i\in \{3,4,5\}$. Then there exists an infinite, thin set $E_i\subseteq \Q$ such that if $t_0\in E_i$, then $\im(\rho_{\phi_{t_0}})=\mathcal G_i$.
\end{enumerate}
\end{theoremB}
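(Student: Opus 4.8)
The plan is to exploit Theorem A to translate each statement about $\im(\rho)$ into an arithmetic statement about the adjusted post-critical orbit, and then verify that statement. For parts (1) and (2), the post-critical orbit of $\psi_{t_0}$ (resp.\ $\vartheta_{t_0}$) is an explicit sequence of rational functions of $t_0$, so the task is to check: (a) which vector $\underline a$ the product relation $\prod_i c_i^{a_i}\in(\Q^\times)^2$ forces — this should be dictated by the shape of $c_0$, which for $\psi$ is $\delta = 1+t_0^2$ and for $\vartheta$ is $\delta=\tfrac{-1}{t_0^2-1}$, each of which is (up to an obvious square) designed to make exactly one of the defining conditions of $\mathcal G_1$, $\mathcal G_2$ hold; (b) that no \emph{other} nonzero vector $\underline a'$ gives a product in $(\Q^\times)^2$, i.e.\ the remaining elements of the post-critical orbit are multiplicatively independent modulo squares; and, in the case corresponding to condition 1b), (c) that the auxiliary element $\widetilde c_{\underline a}$ (computed from the universal polynomial $g_{\underline a}$ evaluated at the first few $c_i$'s and the relevant square root) is not in the span of $\{c_n\}$ modulo squares. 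First I would pin down $\mathcal G_1,\mathcal G_2$ concretely using Proposition \ref{subgroups}, determine which of 1a)/1b) each requires, and then carry out (a)–(c). The identity $\im(\rho_{\psi_{t_0}}) = \im(\rho_{\phi_{-1-t_0^2}})$ is immediate since $\psi_{t_0}$ and $\phi_{-1-t_0^2}$ are affinely conjugate (indeed equal after the translation absorbing the linear term), and the "Consequently" clauses follow because a group that occurs for infinitely many specializations $t_0$ must already be the generic image $\im(\rho_\psi)$ over $\Q(t)$ — the generic image contains every specialized image, and by Proposition \ref{subgroups} the only index-two candidates are the $\mathcal G_i$, so if $\im(\rho_\psi)$ were the full $\Omega_\infty$ a Hilbert-irreducibility argument would make the generic behaviour surjective for all but a thin set, contradicting that $\mathcal G_1$ occurs on the non-thin set $2\Z\setminus\{0\}$; hence $\im(\rho_\psi)=\mathcal G_1$, and similarly for $\vartheta$.

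The main work is establishing the independence statements (b) and (c), which amount to showing that an explicit finite collection of rational functions of $t$ (or of $t_0$ ranging over $2\Z\setminus\{0\}$) is linearly independent in $\Q(t)^\times/(\Q(t)^\times)^2$ (resp.\ in $\Q^\times/(\Q^\times)^2$ for each admissible $t_0$). Over the function field $\Q(t)$ this is a matter of tracking zeros and poles: two elements of $\Q(t)^\times$ differ by a square iff their divisors on $\mathbb P^1_{\overline\Q}$ agree modulo $2$ and the leading constants agree modulo squares, so I would compute the factorizations of $c_n(t)$ — which are governed by the dynamics $c_{n+1} = c_n^2 + (\text{shift})$ — and argue that new irreducible factors appear at each step (a standard stability-type induction, essentially the same mechanism underlying Stoll's criterion), so that no nontrivial product of them is a square. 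For the specialized statements I would then invoke that the property "$\{c_n(t_0)\}$ independent mod squares" holds for all $t_0$ outside a thin (or, better, density-zero) set, and check by hand that the arithmetic progression $2\Z\setminus\{0\}$ avoids the finitely many bad primes/places — here the evenness is presumably chosen precisely to control the $2$-adic valuations of the $c_i$. The element $\widetilde c_{\underline a}$ requires extracting the universal polynomial $g_{\underline a}$ from Proposition \ref{The abelian "additional" extension} for the specific short vector $\underline a$ defining $\mathcal G_1$ or $\mathcal G_2$ and repeating the valuation bookkeeping one more element further.

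For part (3), the strategy is fundamentally different because the groups $\mathcal G_3,\mathcal G_4,\mathcal G_5$ cannot be realized on a non-thin set: the defining vectors $\underline a$ have support extending to some index $n\geq 1$, so realizing $\mathcal G_i$ forces $c_0,\dots,c_n$ to satisfy one square condition while \emph{all} other products avoid squares, and — crucially — forces $f$ to be stable, which by the results behind Corollary \ref{PCF_polynomials} and the finite-index philosophy is expected to fail for all but finitely many $t_0$ unless one works inside a thin set cut out by requiring a specific polynomial in $t_0$ to be a square. Concretely, I would: write down, for each $i\in\{3,4,5\}$, the curve $C_i/\Q$ parametrizing those $t_0$ for which the single required product $\prod c_j^{a_j}$ is a square (a hyperelliptic-type curve $w^2 = (\text{explicit polynomial in } t_0)$ coming from the post-critical orbit of $\phi_{t_0}$), note that $C_i$ has infinitely many rational points so $E_i\coloneqq$ (image of $C_i(\Q)$) is infinite and thin; then show that for $t_0\in E_i$ outside a thin exceptional subset, \emph{none} of the other (infinitely many) candidate products is a square. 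This last point is exactly where Vojta's conjecture enters: each competing condition "$\prod_{j} c_j(t_0)^{a'_j}\in(\Q^\times)^2$" defines a higher-genus curve or a variety of general type over $C_i$, and Vojta's conjecture over $\Q$ (as stated in Subsection \ref{sub:vojta}) bounds its rational points, so only thinly-many $t_0\in E_i$ can satisfy any of them; a diophantine/counting argument then packages the (infinitely many, but controlled) exceptional families into a single thin set that we remove from $E_i$. I expect this diophantine step — simultaneously controlling the whole infinite family of competing square-conditions via Vojta, and checking the $\widetilde c_{\underline a}$-condition of 1b) for whichever of $\mathcal G_3,\mathcal G_4,\mathcal G_5$ is of that type — to be the principal obstacle; parts (1) and (2) are unconditional precisely because there the relevant independence can be verified directly by $p$-adic valuations rather than through a deep diophantine input.
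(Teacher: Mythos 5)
Your overall architecture (reduce to Theorem A / Theorem \ref{Classification of realizing stable Ma}, parametrize the one required square relation by a curve with infinitely many rational points, treat the competing relations diophantinely) matches the paper, but at the two places where the real work happens your mechanism would not close. For parts (1) and (2) the theorem claims the image for \emph{every} $t_0\in 2\Z\setminus\{0\}$, and your plan — prove independence of the $c_n(t)$ over $\Q(t)$ by divisor bookkeeping, then ``invoke that independence holds for all $t_0$ outside a thin set and check that $2\Z\setminus\{0\}$ avoids the finitely many bad primes'' — cannot deliver that. The locus of bad $t_0$ is a union of \emph{infinitely many} conditions (one curve for each finite nonzero vector $\underline a'$), it is not cut out by finitely many congruences, a countable union of thin sets need not be thin, and in any case a thin-avoidance statement cannot yield a conclusion valid for all even integers. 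The paper's unconditional proof instead generalizes Stoll's trick: pass to the coprime M\"obius factors $b_n=\prod_{d\mid n}\mathfrak c_d^{\mu(n/d)}$ and prove, via Lemma \ref{trick}, a congruence argument uniform in $n$ (a modulus $m_n\equiv -1\bmod 4$ dividing $\gamma_n+\gamma_{n+1}$, exploiting exactly the parity of $t_0$ through $\gamma_{2n}\equiv 0$, $\gamma_{2n+1}\equiv -1\bmod 4$) that no $\overline\beta_n$, $n\geq 3$, is a square; this, plus the explicit computation of $\widetilde{\mathfrak c}_2$ for $\mathcal G_2$, is the missing idea. (Your Hilbert-irreducibility detour for the ``consequently'' clauses is also more roundabout than needed: the square relation defining $\mathcal G_i$ holds identically in $\Q(t)$, so $\im(\rho_\psi)\subseteq\mathcal G_i$ by Corollary \ref{when you are in Ma}, and any single index-two specialization forces equality.)

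For part (3) the same kind of gap reappears: applying Vojta separately to each of the infinitely many competing square conditions and then ``packaging'' the exceptional families is not an argument — the exceptional sets of infinitely many curves could a priori cover all of $E_i$. The paper uses Vojta only once, through Hindes' primitive-divisor lemma (Lemma \ref{primitive_divisors}, whose proof needs the height bound of Conjecture \ref{vojta} to be uniform in the coefficients): for all $t_0$ outside a fixed finite set and all $m\geq n_\psi$, $\mathfrak c_m(t_0)$ (resp.\ $\mathfrak c_m(t_0)\pm t_0$) has a primitive prime divisor occurring to odd valuation, which kills \emph{all} competing relations involving large indices simultaneously; the finitely many remaining relations are then eliminated unconditionally by Faltings via explicit genus computations (including a Riemann--Hurwitz argument for the $\widetilde{\mathfrak c}_3$ condition in the $\mathcal G_4$ case). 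Finally, your description of $\mathcal G_5$ is wrong: its vector is $v_5=(1,0,\ldots,0,\ldots)$, whose support does not extend past index $0$, and the relevant polynomials $x^2-t_0^2$ are \emph{not} stable — this case is governed by the non-stable criterion (Theorem \ref{nonstable}), handled in the paper with primitive divisors for both half-orbits $\mathfrak c_m(t_0)\pm t_0$ together with Lemma \ref{inter}, and the conclusion there is for all $t_0$ outside a thin set, not on a thin set of the $\psi$-parameter.
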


For a definition of thin set, see Subsection \ref{sub:vojta}. A conjecture of Hindes \cite[Conjecture 1.5]{hindes2} implies, together with Theorem A, that the only subgroups of index two of $\Omega_{\infty}$ that can appear as images of $\rho_{\phi_{t_0}}$ for some $t_0\in\Z$ are $\mathcal G_1$ and $\mathcal G_5$. In particular, we show that there exist infinitely many integers $t_0$ such that $\im(\rho_{\phi_{t_0}})=\mathcal G_1$, and that the same holds for $\mathcal G_5$ under Vojta's conjecture.

Even though all our sets of specializations are thin, there is a difference between $\mathcal G_1,\mathcal G_2$ and $\mathcal G_5$ on one side and $\mathcal G_3,\mathcal G_4$ on the other one. In fact, for the first three we can find a polynomial of the form $x^2+h(t)\in \Q(t)[x]$, where $h(t)$ is non-constant, such that its associated arboreal representation (over $\Q(t)$) is $\mathcal G_i$. It might therefore be reasonable to expect that almost all specializations (in the appropriate sense) yield the same image. On the other hand, polynomials of the form $x^2+t_0$, for some $t_0\in\Q$, having as image $\mathcal G_3$ or $\mathcal G_4$ do not come from specializations of a polynomial in the aforementioned form.

In order to prove (1), we use Theorem A, combined with a suitable generalization of the arguments described in \cite[$\mathsection$ 2]{Stoll} to polynomials with rational coefficients. To prove (2), we generalize an argument of Hindes \cite{hindes1} showing that Vojta's conjecture implies the claim if one can show that all algebraic curves in a certain finite set have finitely many rational points. We will be able to deduce this from Faltings' theorem via some algebraic manipulations.

\vspace{3mm}

Finally, we consider the question of whether the various closed subgroups of index at most two of $\Omega_{\infty}$ are non-isomorphic in the category of profinite groups. To this end we obtain a full description of the derived series of each closed subgroup of index at most two of $\Omega_{\infty}$, upgrading the description of their abelianization used to prove Theorem A. 
In parallel we introduce and make systematic use of the concept of the \emph{graph of commutativity} of a topological group equipped with a set of topological generators. A delicate combination of these two inputs enable us to establish the following. 

\begin{theoremC}
Let $H_1,H_2$ be two closed subgroups of $\Omega_{\infty}$, both having index at most $2$. Then $H_1 \cong_{\emph{top.gr.}} H_2$ if and only if $H_1=H_2$. 
\end{theoremC}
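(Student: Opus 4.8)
The plan is to separate the statement into the easy direction and the substantive one. The implication "$H_1=H_2 \Rightarrow H_1\cong H_2$" is trivial, so everything is in proving that distinct closed subgroups of index at most two of $\Omega_\infty$ cannot be isomorphic as topological groups. There are two such groups of index at most two up to equality that require a priori separation: $\Omega_\infty$ itself and each $M_{\underline a}$; and then the $M_{\underline a}$ among themselves as $\underline a$ ranges over non-zero vectors of $\F_2^{(\Z_{\geq 0})}$. Since an isomorphism of topological groups induces an isomorphism of every term of the descending central series and of every successive quotient, the strategy is to extract from the explicit description of the descending central series (obtained earlier in the paper, upgrading the abelianization computation behind Theorem A) a complete set of isomorphism invariants that distinguishes all these groups. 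Concretely, for $H\in\{\Omega_\infty\}\cup\{M_{\underline a}\}$ I would record the sequence of (topological) successive quotients $\gamma_i(H)/\gamma_{i+1}(H)$ — each of which is an elementary abelian pro-$2$ group, hence determined by a cardinal/dimension — together with the finer combinatorial data coming from the \emph{graph of commutativity} of $H$ with respect to its canonical topological generating set. The first invariant already separates $\Omega_\infty$ from every $M_{\underline a}$ (the abelianization of $\Omega_\infty$ is $\F_2^{\Z_{\geq 0}}$-profinite of a different "shape" than that of a maximal subgroup, since passing to an index-two subgroup changes the ranks of the lower central quotients in a controlled way), and also separates $M_{\underline a}$ from $M_{\underline a'}$ whenever the largest index $n$ with $a_n\neq 0$ differs, because that index governs at which level of the tree the defining condition "kicks in" and hence the ranks of the first few lower central quotients.

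The main work, and the main obstacle, is separating $M_{\underline a}$ from $M_{\underline a'}$ when $\underline a\neq\underline a'$ but both have the same top index $n$ — here the coarse numerical invariants (ranks of lower central quotients) will in general agree, and one genuinely needs the graph of commutativity. The plan is to prove that the isomorphism type of the graph of commutativity associated to the canonical generators of $M_{\underline a}$ (suitably rigidified — e.g. together with the filtration by tree level, or with the distinguished vertices corresponding to generators that act non-trivially at level one) is itself an isomorphism invariant of the topological group, and then to compute this graph explicitly as a function of $\underline a$ and check that $\underline a\mapsto(\text{graph})$ is injective. Making "the graph is an invariant" precise is the delicate point: a topological-group isomorphism need not send one canonical generating set to another, so one must characterize the relevant generators intrinsically — typically as those elements whose image in $H/\gamma_2(H)$ is non-trivial and which are "minimal" in an appropriate sense (not products of commutators times other generators), or equivalently by how they sit in the tower $H\supseteq\gamma_2(H)\supseteq\cdots$ — so that the commutativity relations among them, read off in $\gamma_2(H)/\gamma_3(H)$ (where $[x,y]$ lives), become canonical. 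Once a canonical description of the generators and their pairwise commutators modulo $\gamma_3$ is in place, the graph of commutativity is forced, and the remaining step is the bookkeeping: from the explicit formulas for $\gamma_i(M_{\underline a})$ one reads the edge set of the graph and verifies it determines $\underline a$.

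Thus the skeleton is: (i) reduce to showing distinct groups in the list are non-isomorphic; (ii) invoke the explicit descending central series to get the ranks of all lower central quotients, disposing of $\Omega_\infty$ versus the $M_{\underline a}$'s and of pairs $M_{\underline a},M_{\underline a'}$ with distinct top index; (iii) for the remaining pairs, show the graph of commutativity (with respect to an intrinsically characterized generating set) is a topological-group invariant; (iv) compute this graph in terms of $\underline a$ and verify injectivity of $\underline a\mapsto\text{graph}$. I expect steps (iii)–(iv) to carry all the difficulty: step (iii) because extracting a canonical generating set from the abstract topological group requires careful use of the lower central filtration, and step (iv) because the combinatorics of the commutator relations in $M_{\underline a}$ is intricate and the injectivity check is exactly the "delicate combination" the introduction alludes to. Everything else — the easy direction, and the numerical separations in step (ii) — should be routine given the structural results already established.
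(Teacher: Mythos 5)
Your overall toolkit (descending central series plus graphs of commutativity) is the right one, but two of your concrete steps do not survive contact with the actual structure of these groups, and they are precisely where the content lies. First, step (ii) fails: the ranks of the successive quotients of the descending central series separate nothing here. By the explicit description in the paper's Section \ref{descending stuff}, for a non-extremal $\underline{a}$ with top index $N$ one has $M_{\underline{a}}^{(i)}=\Omega_\infty^{(i),\underline{a}}$ of index two in $\Omega_\infty^{(i)}$ for $i<2^N$ and $M_{\underline{a}}^{(i)}=\Omega_\infty^{(i)}$ for $i\geq 2^N$, and every successive quotient $\Omega_\infty^{(i)}/\Omega_\infty^{(i+1)}$ is topologically $\prod_{2^N>i}\F_2\cong\F_2^{\Z_{\geq 0}}$; so all the lower central quotients of all the groups in play have the same countably infinite rank (up to a finite factor), and no cardinality bookkeeping distinguishes $\Omega_\infty$ from $M_{\underline{a}}$ or different top indices. (Also, these quotients are not always elementary abelian: when $a_0=1$ the abelianization has a $\Z/4\Z$ summand, by Theorem \ref{Describing abelianized Ma}.) What actually recovers the coordinates of $\underline{a}$ below the top index is a torsion invariant you do not use: via $M_{\underline{a}}^{(2^N-1)}\cong (M_{\sh_N(\underline{a})}^{2^N})_{\text{fib}}$, the existence of a continuous surjection $M_{\underline{a}}^{(2^N-1)}\twoheadrightarrow\Z/4\Z$ holds if and only if $a_N=1$ and some higher coordinate is $1$ (Proposition \ref{when you surject to Z/4Z}); this is how the paper reads off $a_M$ for all $M$ below the top index.

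Second, step (iii) as you state it -- rigidify a canonical generating set so that ``the'' graph of commutativity becomes an isomorphism invariant, then prove $\underline{a}\mapsto\text{graph}$ injective -- is the hard part you correctly flag, and it is not what is needed nor what the paper does. Generating sets are highly non-unique and the graph depends strongly on the choice (indeed the paper's own statements are of the form ``there exists a generating set with property P''). The paper sidesteps canonicalization entirely by using only quantified numerical invariants of these graphs, which are automatically preserved by topological isomorphisms: the top index is detected by the dichotomy ``there exists a generating set of $M_{\underline{a}}^{(2^N-1)}$ with $d_\Gamma\geq 3$'' versus ``every generating set has $d_\Gamma\leq 2$'' (Proposition \ref{A special graph: non-extremal}), and the extremal vectors $e_N$ (including $M_{e_0}\cong\Omega_\infty^2$) are handled separately by counting connected components of commutativity graphs of deep terms $M_{e_N}^{(2^k(2^{N+1}-1))}$, exploiting the shift/jump in their filtration (Propositions \ref{extremal subgroups have shifted augmentation filtration}, \ref{One index in advance}, Corollary \ref{less components}). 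Your plan contains no substitute for either the $\Z/4\Z$-quotient criterion or the extremal-case analysis, so as written steps (ii)--(iv) do not yield the theorem.
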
 

In particular Theorem C implies that the groups $\mathcal G_1,\ldots,\mathcal G_5$ yield five distinct instances of the infinite Galois inverse problem. We remark that, for multiple reasons, the realization of a closed subgroup of $\Omega_{\infty}$ cannot be inferred from the infinite version of Shafarevich theorem given in \cite[Corollary 9.5.10]{neukirch}. 

Here is a brief outline of the content of the paper. In Sections \ref{basics} and \ref{subgroups of index 2, abstract} we recall the fundamental definitions and obtain a group theoretic criterion for having index two. In Section \ref{translations} we reprove Stoll's criterion for surjectivity for the arboreal representation attached to a quadratic polynomial using our language and we translate our group theoretic criterion in terms of linear dependence relations among elements of the post-critical orbit, establishing Theorem A. Next, in Section \ref{realizations} we prove Theorem B. Finally in Section \ref{proofC} we give an explicit description of the derived series of closed subgroups of index at most two and we use it to prove Theorem C.

\subsection*{Notation and conventions}
For a topological group $G$, we denote, as usual, by $[G,G]$ the subgroup topologically generated by commutators and by $G^{\ab}$ the quotient of $G$ by $[G,G]$.

We denote by $G^\vee$ the dual group of $G$, namely the group $\hom_{\text{top.gr.}}(G,S^1)$, where the topology on $S^1$ is the Euclidean one. Notice that for a pro-$2$-group $G$, the group $G^\vee$ coincides with $\hom_{\text{top.gr.}}(G^{\ab},\Q_2/\Z_2)$. Here the topology on $\Q_2/\Z_2$ can be either the $2$-adic one or the one induced by its inclusion in $S^1$; the corresponding $\hom_{\text{top.gr.}}$-sets are in fact equal.

The set $\F_2^{\Z_{\geq 0}}$ is the $\F_2$-vector space of functions $\Z_{\geq 0}\to \F_2$, while the set $\F_2^{(\Z_{\geq 0})}$ is the $\F_2$-vector space of functions $\Z_{\geq 0}\to \F_2$ that take non-zero values at finitely many points. For every $N\geq 0$, we denote by $e_N$ the $N$-th vector of the canonical basis of $\F_2^{(\Z_{\geq 0})}$, namely the function $e_N\colon\Z_{\geq 0}\to\F_2$ such that $e_N(m)=1$ if $m=N$ and $e_N(m)=0$ otherwise.

For a field $F$ of characteristic not 2 and a set of non-zero elements $\{c_1,\ldots,c_n\}\subseteq F$, we denote by $\langle c_1,\ldots,c_n\rangle_F$ the $\F_2$-span of $\{c_1{F^{\times}}^2,\ldots,c_n{F^{\times}}^2\}$ inside the $\F_2$-vector space $F^{\times}/{F^{\times}}^2$.

\section*{Acknowledgements}
We are extremely grateful to the anonymous referees of this work for valuable feedback. We are especially grateful to the referee whose careful read of the proof of Theorem \ref{thmC} and insightful suggestions brought us to a shorter and more elegant version of the argument. 

The first two authors are very grateful to the Max Planck Institute for Mathematics in Bonn for its financial support and for the inspiring atmosphere and great work conditions, which helped them to obtain the most significant ideas of this project. The first two authors wish to thank the third author for helping them with some of the calculations of Section \ref{realizations} and for, thereby, accepting to coauthor the paper. The third author acknowledges the Spanish Ministry of Economy and Competitiveness for its financial support through the Severo-Ochoa programme for Centres of Excellence in R\&D (SEV-2015-0554). Finally, we would like to thank Rafe Jones for giving useful feedbacks on an early version of this paper and the anonymous referee for providing many helpful comments.

\section{The tree of roots as a Cayley graph}\label{basics}
In this section, we will explain how to think of the infinite, rooted, regular binary tree as a Cayley graph, and show how this relates to the tree of roots of a quadratic rational function. The language of Cayley graphs will be a more natural one for our arguments.

Let $S\coloneqq\{x,y\}$ and let $F_2$ be the free monoid generated by $S$ . Put
$$T_{\infty}\coloneqq\text{Cayley}(F_2,S),
$$
the Cayley graph on $F_2$ with respect to the generating set $S$: each word $w$ in $F_2$ is connected to $xw$ and $yw$. This is an infinite, regular, binary tree rooted at $e$, the identity element of $F_2$. We put
$$\Omega_{\infty}\coloneqq\text{Aut}_{\text{graph}}(T_{\infty}).
$$
The monoid $F_2$ is naturally a \emph{graded} monoid, where the grading in consideration is simply given by the length of a word: this defines naturally a monoid homomorphism from $F_2$ to $\mathbb{Z}_{\geq 0}$. For each $n \in \mathbb{Z}_{\geq 0}$ we put $L_n$ to be the set of elements of $T_{\infty}$ having length $n$. It is clear that $|L_n|=2^{n}$. We have that $\Omega_{\infty}$ preserves each set $L_n$, and so it does preserve
$$T_N\coloneqq\bigcup_{i=0}^{N}L_i,
$$
For every $N\geq 1$ we will let $\Omega_N\coloneqq \text{Aut}_{\text{graph}}(T_N)$. The restriction homomorphism
$$\Omega_{\infty} \to \Omega_N
$$
is surjective. More details on the action of $\Omega_N$ on $T_N$ can be found for example in \cite{ferra,jones2,silverman2}.

The system of $\{\Omega_N\}_{N \in \mathbb{Z}_{\geq 0}}$, together with the restriction homomorphisms, forms an inverse system, and it is a fact that the natural homomorphism
$$\Omega_{\infty} \to \varprojlim_N\Omega_N,
$$
gotten from the above system of restriction maps, is an isomorphism. This naturally endows $\Omega_{\infty}$ with the structure of a profinite group. Furthermore one can show that for each non-negative integer $N$ we have that $\Omega_N$ is a $2$-Sylow of $\text{Sym}(L_N)$. This gives that $\Omega_{\infty}$ is a pro-$2$-group. This description is of course just a rephrasing of the usual wreath product formulation used, for example, in \cite{boston,ferra,jones2,jones6}.

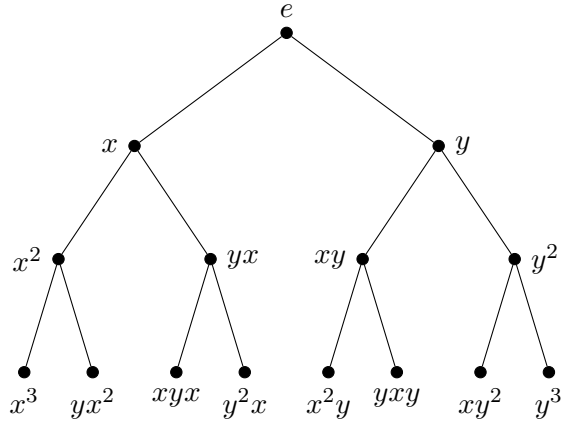
\begin{figure}[h]
\centering

\begin{tikzpicture}
\tikzstyle{solid node}=[circle,draw,inner sep=1.5,fill=black]
\node[solid node,label=above:$e$]{}[sibling distance=40mm]
child{node[solid node,label=left:$x$]{}[sibling distance=20mm]
child{node[solid node,label=left:$x^2$]{}[sibling distance=9mm]
child{node[solid node,label=below:$x^3$]{}}
child{node[solid node,label=below:$yx^2$]{}}}
child{node[solid node,label=right:$yx$]{}[sibling distance=9mm]
child{node[solid node,label=below:$xyx$]{}}
child{node[solid node,label=below:$y^2x$]{}}}
}
child{node[solid node,label=right:$y$]{}[sibling distance=20mm]
child{node[solid node,label=left:$xy$]{}[sibling distance=9mm]
child{node[solid node,label=below:$x^2y$]{}}
child{node[solid node,label=below:$yxy$]{}}}
child{node[solid node,label=right:$y^2$]{}[sibling distance=9mm]
child{node[solid node,label=below:$xy^2$]{}}
child{node[solid node,label=below:$y^3$]{}}}
};
\end{tikzpicture}\caption{The first three levels of $T_\infty$.}\label{fig:tree}
\end{figure}

\vspace{4mm}

Now let $F$ be a field of characteristic different from $2$. Let $f(x)\coloneqq(x-\gamma)^2-\delta$ in $F[x]$ be a quadratic polynomial. Fix once and for all $F^{\text{sep}}$ a separable closure of $F$ and put $G_F\coloneqq\gal(F^{\text{sep}}/F)$ the group of $F$-algebra automorphisms of $F^{\text{sep}}$, viewed as a topological group with its natural profinite topology. For $n \in \mathbb{Z}_{\geq 0}$, the $n$-th iterate of $f$ will be denoted by $f^{(n)}$, where we set $f^{(0)}\coloneqq x$ by convention. Assume that $f^{(n)}$ is a separable polynomial for every $n \in \mathbb{Z}_{\geq 0}$. The infinite, regular, rooted binary tree, $T_{\infty}(f)$ is constructed in the following way: for every non-negative integer $n$, the nodes of the tree at level $n$ are labeled by the roots of $f^{(n)}$ in $F^{\text{sep}}$. If $n \in \mathbb{Z}_{\geq 1}$ then a node $\alpha$ at level $n$ is connected to a node $\beta$ at level $n-1$ if and only if $f(\alpha)=\beta$.

In this manner $T_{\infty}(f)$ is isomorphic, as a graph, to $T_{\infty}$. Observe that $T_{\infty}(f)$ is a $G_F$-set and furthermore, since $f \in F[x]$, we have that $G_F$ preserves the tree structure on $T_{\infty}(f)$. As such we have a continuous homomorphism 
$$\rho_f:G_F \to \Omega_{\infty}(f),
$$
where $\Omega_{\infty}(f)\coloneqq\text{Aut}_{\text{graph}}(T_{\infty}(f))$, which is, as a topological group, isomorphic to $\Omega_{\infty}$. The map $\rho_f$ is the \emph{arboreal Galois representation} associated to $f$.

We denote by $\text{Isom}_{\text{graph}}(T_{\infty}(f),T_{\infty})$ the set of graph isomorphisms. Observe that $\Omega_{\infty}$ acts freely and transitively on $\text{Isom}_{\text{graph}}(T_{\infty}(f),T_{\infty})$. Every element $\iota \in \text{Isom}_{\text{graph}}(T_{\infty}(f),T_{\infty})$ induces an isomorphism of profinite groups
$$\iota_{*}:\Omega_{\infty}(f) \to \Omega_{\infty}.
$$
Observe that after applying $\text{Isom}_{\text{graph}}(T_{\infty}(f),T_{\infty})$ we obtain  that $\rho(f)$ gives a unique $\Omega_{\infty}$-conjugacy class of continuous homomorphisms, $[\rho_f]$, from $G_F$ to $\Omega_{\infty}$. \footnote{This situation is analogous to the one of $l$-adic Galois representations attached to the Tate module of an elliptic curve $E$ where the representation $\rho_{E,l}$ is canonically attached to $E$, but if one identifies $T_l(E)$ with $\mathbb{Z}_l^2$, then one obtains a conjugacy class of representations in $\text{GL}_2(\mathbb{Z}_l)$.}

\begin{remark}
 In particular, in the case in which $\text{Im}(\rho_f)$ has index two in $\Omega_{\infty}(f)$, all elements of $[\rho_f]$ have the same image in $\Omega_{\infty}$, since subgroups of index two are always normal and therefore in this case $\text{Im}(\iota_{*} \circ \rho_f)$ does not depend on the choice of $\iota \in \text{Isom}_{\text{graph}}(T_{\infty}(f),T_{\infty})$. For this reason it makes sense to say that $\text{Im}(\rho(f))$ is contained in a given maximal closed subgroup of $\Omega_\infty$. 
\end{remark}

\section{Maximal subgroups of \texorpdfstring{$\Omega_{\infty}$}{} and their abelianizations} \label{subgroups of index 2, abstract}

The purpose of this section is to describe the abelianization of the maximal closed subgroups of $\Omega_{\infty}$. We will use the notation of Section \ref{basics}, so recall that $T_\infty$ is the Cayley graph of the free monoid $F_2$ generated by $S=\{x,y\}$ and $\Omega_{\infty}$ is the group of graph automorphisms of $T_{\infty}$.

We have an evident involution $\sigma_e \in \Omega_{\infty}$ induced by the unique non-trivial monoid automorphism of $F_2$, which consists of exchanging $x$ and $y$. Furthermore for each word $w \in F_2$ we can define an involution
$$\sigma_w \in \Omega_{\infty},
$$
defined by saying that $\sigma_w(v)=v$ if $v \neq v'w$ for each $v' \in F_2$ and $\sigma_w(v)=\sigma_e(v')w$ if $v=v'w$ for some (and hence unique) $v' \in F_2$. Using these involutions one sees easily that for every non-negative integer $N$ we have that $\Omega_{N+1}$ equals the semidirect product 
$$\mathbb{F}_2^{L_{N}} \rtimes \Omega_{N},
$$ 
where $\mathbb{F}_2^{L_{N}}$ can be naturally identified with the subspace spanned by the set of involutions $\{\sigma_w\}_{w \in L_{N}}$, and the implicit action of $\Omega_{N}$ on $\mathbb{F}_2^{L_{N}}$ is simply the $\mathbb{F}_2$-linearization of the action of $\Omega_N$ on $L_{N}$. In particular we shall view $\mathbb{F}_2^{L_{N}}$ as a $\mathbb{F}_2[\Omega_N]$-module. 

Therefore, by means of the involutions $\sigma_w$ we have an identification:
\begin{equation}\label{semidirect}
\Omega_{N+1}=\mathbb{F}_2^{L_{N}} \rtimes (\mathbb{F}_2^{L_{N-1}} \rtimes( \ldots \mathbb{F}_2^{L_0})).
\end{equation}
This gives for each element $\sigma \in \Omega_{N+1}$ a \emph{unique} decomposition as
$$\sigma_{N} \cdot \sigma_{N-1} \ldots \cdot \sigma_0
$$ 
where each $\sigma_i$ belongs to the subspace $\mathbb{F}_2^{L_i}$ generated by the $\sigma_{w}$ with $w \in L_i$.
\begin{definition}\label{digital_representation}
We will refer to this representation as the \emph{digital representation} of $\Omega_{\infty}$.
\end{definition}

 Pushing this representation to the limit $N \to \infty$, provides us with an isomorphism, as \emph{profinite spaces}, between $\Omega_{\infty}$ and $\mathbb{F}_2^{T_{\infty}}$: the formulas for the group law so obtained on $\mathbb{F}_2^{T_{\infty}}$, by transport of structure, are quite involved, reflecting the fact that $\Omega_{\infty}$ is a substantially more complicated group than $\mathbb{F}_2^{T_{\infty}}$.

For each non-negative integer $i$ denote by 
$$\phi_i: \mathbb{F}_2^{L_i} \to \mathbb{F}_2
$$
the unique non-trivial homomorphism as $\mathbb{F}_2[\Omega_{i}]$-modules, which is obtained by summing all coordinates. From the iterated semi-direct description of $\Omega_{\infty}$, given in \eqref{semidirect}, we see that each of the $\phi_i$ extends to a continuous homomorphism from $\Omega_{\infty}$ to $\mathbb{F}_2$. In formulas, using the digital representation, the extended map, which we shall denote still as $\phi_i$, is simply gotten by defining
$$\phi_i(\sigma)\coloneqq\phi_i(\sigma_i).
$$
It is precisely due to the fact that $\phi_i$ preserves the structure as an $\mathbb{F}_2[\Omega_{i}]$-module that one can show that it gives a group homomorphism from $\Omega_{\infty}$ to $\mathbb{F}_2$. Furthermore one has that
$$[\Omega_{\infty},\Omega_{\infty}]=\Phi(\Omega_{\infty})=\bigcap_{i \in \mathbb{Z}_{\geq 0}}\ker(\phi_i)
$$
(see also Proposition \ref{reducing descending central to augmentational} for a more general statement). Here the first group denotes the commutator subgroup and the second the Frattini subgroup (i.e. the intersection of all maximal proper closed subgroups). As observed for example in \cite[p.\ 241]{Stoll}, it follows that $\Omega_\infty^{\ab}\cong \F_2^{\Z_{\geq 0}}$.

Hence we can parametrize closed subgroups of index two of $\Omega_{\infty}$ simply using non-zero vectors in the direct sum $\mathbb{F}_2^{(\mathbb{Z}_{\geq 0})}$, via the assignment sending $\underline{a}\coloneqq(a_n)_{n \in \mathbb{Z}_{\geq 0}} \in \mathbb{F}_2^{(\mathbb{Z}_{\geq 0})}$ into
$$M_{\underline{a}}\coloneqq\ker\left(\sum_{i \in \mathbb{Z}_{\geq 0}}a_i\phi_i\right).
$$

We will make crucial use of some additional maps from $\Omega_{\infty}$ into $\mathbb{F}_2$, which are ``close'' to be homomorphisms. They will be used to describe the map
$$M_{\underline{a}}^{\text{ab}} \to \Omega_{\infty}^{\text{ab}},
$$
for each $\underline{a}  \in \mathbb{F}_2^{(\mathbb{Z}_{\geq 0})}$. For every positive integer $i \in \mathbb{Z}_{\geq 1}$ and $s \in S=\{x,y\}$, we put
$$\widetilde{\phi}_i(s):\mathbb{F}_2^{L_i} \to \mathbb{F}_2,
$$
consisting of summing only the coordinates of words $w$ of the form $w\coloneqq v's$ (observe this happens half of the time because $i>0$). Observe that these two maps are \emph{not} $\mathbb{F}_2[\Omega_i]$-linear but they are $\mathbb{F}_2[\ker(\phi_0) \cap \Omega_i]$-linear. We extend $\widetilde{\phi}_i(s)$ to a set theoretic map from $\Omega_{\infty}$ to $\mathbb{F}_2$ using the digital representation, in the same way we did for $\phi_i$. The maps $\widetilde{\phi}_i(s)$ fail to be additive in the following manner. Take $\sigma_1,\sigma_2 \in \Omega_{\infty}$, then we have that:
\begin{equation}\label{uncertain_additivity}
\widetilde{\phi}_i(x)(\sigma_1\sigma_2)=\begin{cases}\widetilde{\phi}_i(x)(\sigma_1)+\widetilde{\phi}_i(x)(\sigma_2) & \mbox{if }\phi_0(\sigma_1)=0,\\
\widetilde{\phi}_i(x)(\sigma_1)+\widetilde{\phi}_i(y)(\sigma_2) & \mbox{if } \phi_0(\sigma_1)=1.
\end{cases}
\end{equation}
The $\mathbb{F}_2[\ker(\phi_0) \cap \Omega_i]$-linearity of $\widetilde{\phi}_i(s)$ is clear from the formula above. We shall refer to this property as to the \emph{uncertain additivity} of $\widetilde{\phi}_i(s)$. 
 
Before stating and proving the next theorem, which describes the abelianization of $M_{\underline{a}}$, let us recall that the set $\mathbb{F}_2 \times \mathbb{F}_2$ can be equipped with the group law
\begin{equation}\label{cohomological_group_structure}
 (x_1,y_1)\star(x_2,y_2)\coloneqq(x_1+x_2+y_1 \cdot y_2,y_1+y_2),
\end{equation}

where the $\cdot$ is the ordinary product in $\mathbb{F}_2$, with respect to the field structure. This is a way to represent the cyclic group on $4$ elements: it comes from writing explicitly the unique non-trivial element of $H^2(\mathbb{F}_2,\mathbb{F}_2)$ given by $\chi \cup \chi$, where $\chi$ is the unique non-trivial character of $\mathbb{F}_2$.
 \begin{theorem} \label{Describing abelianized Ma}
Assume $\underline{a} \in \mathbb{F}_2^{(\mathbb{Z}_{\geq 0})}$ is non-zero and different from $(1,0, \ldots, 0, \ldots)$. Let 
$$\varphi\colon M_{\underline{a}}^{\emph{ab}} \to \Omega_{\infty}^{\text{ab}}
$$
be the natural map induced by the inclusion. Then $|\ker\varphi|=2$. Moreover, the following hold.
\begin{enumerate}
\item If $a_0=1$, then $M_{\underline{a}}^{\emph{ab}}$ has a cyclic direct summand of order $4$. A surjective homomorphism from $M_{\underline{a}}$ to $\mathbb{Z}/4\mathbb{Z}$ (represented as $\mathbb{F}_2 \times \mathbb{F}_2$ with the group law \eqref{cohomological_group_structure}), unique up to restriction of characters from $\Omega_{\infty}$, is given by
$$\psi'(\underline{a})\coloneqq\left(\sum_{i \in \mathbb{Z}_{\geq 1}}a_i\widetilde{\phi}_i(x),\phi_0\right).
$$
Its double is the restriction of $\sum_{i \in \mathbb{Z}_{\geq 1}}a_i\phi_i=\phi_0$, which is the unique non-trivial element of $2(M_{\underline{a}}^{\emph{ab}})^{\vee}$.

\item If $a_0=0$, then $M_{\underline{a}}^{\emph{ab}}$ is an $\mathbb{F}_2$-vector space, and a character from $M_{\underline{a}}$ to $\mathbb{F}_2$ (unique modulo restriction of characters from $\Omega_{\infty}$) generating
$$\frac{(M_{\underline{a}}^{\emph{ab}})^{\vee}}{(\Omega_{\infty}^{\emph{ab}})^{\vee}_{|M_{\underline{a}}}}
$$
is given by: 
$$\psi(\underline{a})\coloneqq\sum_{i \in \mathbb{Z}_{\geq 1}}a_i\widetilde{\phi}_i(x).
$$ 
\end{enumerate}

\end{theorem}
\begin{proof}

 To start, notice that we have a chain of subgroups:
 $$[M_{\underline{a}},M_{\underline{a}}]\subseteq [\Omega_\infty,\Omega_\infty]\subseteq M_{\underline{a}}\subseteq\Omega_\infty.$$
 Hence, $\displaystyle\ker\varphi= \frac{[\Omega_\infty,\Omega_\infty]}{[M_{\underline{a}},M_{\underline{a}}]}$. Let $N$ be the largest non-negative integer with $a_{N} \neq 0$, and let $I_{\Omega_{N}}$ be the augmentation ideal in $\mathbb{F}_2[\Omega_N]$. We will first prove the following claim:
 \begin{equation}\label{claim}
   \mbox{every class in $\ker\varphi$ can be represented by an element of $I_{\Omega_N}\F_2^{L_N}$.}   
 \end{equation}
 Here we are thinking of $\F_2^{L_N}$ as the subgroup $(\ldots,0,\ldots,0,\F_2^{L_N},0,\ldots,0)\subseteq\Omega_\infty$, using the iterated semiproduct description of $\Omega_\infty$. We will establish a more general version of the above claim in Proposition \ref{index at most 2 derived}. However, the reader only interested in the present theorem will find the proof self-contained.
 
  To prove \eqref{claim}, first notice that using the definition of $M_{\underline{a}}$ one easily has that:
 \begin{equation}\label{augmentation1}
  I_{\Omega_{N}} \mathbb{F}_2^{L_{N}}=\mathbb{F}_2^{L_{N}} \cap M_{\underline{a}}=\text{ker}(\phi_N),
 \end{equation}
 since $I_{\Omega_N}\F_2^{L_N}$ is generated by the set $\{\lambda_{t,w}\colon t,w\in L_N, t\neq w\}$, where $\lambda_{t,w}$ is the element of $\F_2^{L_N}$ defined by $(\varepsilon_v)_{v\in L_N}$ with $\varepsilon_v=1$ if and only if $v\in \{t,w\}$.

 Consider a commutator $x'=[\sigma,\tau]$, where $\sigma,\tau\in\Omega_\infty$. One can write $\sigma=(\sigma',\sigma_N)$ and $\tau=(\tau',\tau_N)$ where $\sigma_N,\tau_N\in\Omega_N$ and $\sigma',\tau'\in \ker(\Omega_{\infty}\to\Omega_N)$. Thus $x'=(\lambda,[\sigma_N,\tau_N])$ for some $\lambda\in \ker(\Omega_{\infty}\to\Omega_N)$. Since the natural projection $M_{\underline{a}}\to \Omega_N$ is surjective, there exist $\sigma'',\tau''\in\ker(M_{\underline{a}}\to \Omega_N)$ such that $(\sigma'',\sigma_N),(\tau'',\tau_N)\in M_{\underline{a}}$. Let $y'=[(\sigma'',\sigma_N),(\tau'',\tau_N)]\in [M_{\underline{a}},M_{\underline{a}}]$. It is then clear that $x'{y'}^{-1}\in \ker(\Omega_\infty\to\Omega_N)\cap [\Omega_\infty,\Omega_\infty]$. Now we claim that:
 \begin{equation}\label{kernels}
 [M_{\underline{a}},M_{\underline{a}}]\cap\ker(\Omega_\infty\to\Omega_{N+1})=[\Omega_\infty,\Omega_\infty]\cap\ker(\Omega_\infty\to\Omega_{N+1}).
 \end{equation}
We remark that this claim could be proved by an elementary ad-hoc argument relying on the fact that $M_{\underline{a}}=\ker(\Omega_\infty\to\Omega_{N+1}) \rtimes \text{Im}(M_{\underline{a}} \to \Omega_{N+1})$ combined with the fact that for $\underline{a} \neq (1,0,\ldots,0,\ldots)$ and each $N \in \mathbb{Z}_{\geq 0}$, the group $M_{\underline{a}}$ acts transitively on $L_N$. Here instead we refer to Proposition \ref{control at infinite level}, with $j=1$. 

It follows by \eqref{kernels} that there exists $z\in [M_{\underline{a}},M_{\underline{a}}]\cap\ker(\Omega_\infty\to\Omega_{N+1})$ such that $xy^{-1}z\in \F_2^{L_N}\cap M_{\underline{a}}$, and by \eqref{augmentation1} this proves \eqref{claim}.
 
 Now consider $[M_{\underline{a}},I_{\Omega_N}\F_2^{L_N}]$. On the one hand, by \eqref{augmentation1} this is a subset of $[M_{\underline{a}},M_{\underline{a}}]$. On the other hand, by a direct computation (see also, more in general, Proposition \ref{reducing descending central to augmentational}) one can check that it contains $I^2_{\Omega_N}\F_2^{L_N}$, because the natural projection $M_{\underline{a}}\to \Omega_N$ is surjective. Hence, it follows that:
 \begin{equation}\label{augmentation2}
  I_{\Omega_{N}}^2 \mathbb{F}_2^{L_{N}} \subseteq [M_{\underline{a}},M_{\underline{a}}].
 \end{equation}

Next, we claim that $I_{\Omega_N}^2 \mathbb{F}_2^{L_N}$ consists of all the elements $v \in \mathbb{F}_2^{L_N}$ such that $\widetilde{\phi}_N(x)(v)=\widetilde{\phi}_N(y)(v)=0$. In fact, first observe that the description of $I_{\Omega_N}\F_2^{L_N}$ given below \eqref{augmentation1} shows that $I_{\Omega_N}^2 \mathbb{F}_2^{L_N}$ is generated by all elements of the form $(1-\sigma)\lambda_{t,w}$, as $\sigma$ runs over $\Omega_N$ and $t,w$ over all distinct pairs of words in $L_N$. But it is clear that $\widetilde{\phi}_N(x)((1-\sigma)\lambda_{t,w})=\widetilde{\phi}_N(x)(\lambda_{t,w})+\widetilde{\phi}_N(x)(\sigma\lambda_{t,w})$, and the latter coincides with:
$$\begin{cases} 2\widetilde{\phi}_N(x)(\lambda_{t,w})=0 & \mbox{if } \phi_0(\sigma)=0\\
				    \widetilde{\phi}_N(x)(\lambda_{t,w})+\widetilde{\phi}_N(y)(\lambda_{t,w})=\phi_N(\lambda_{t,w})=0 & \mbox{if } \phi_0(\sigma)=1\\
                                                                                                                        
                                                                                                                       \end{cases}.
$$
Thus, every element $v\in I_{\Omega_N}^2 \mathbb{F}_2^{L_N}$ satisfies $\widetilde{\phi}_N(x)(v)=\widetilde{\phi}_N(y)(v)=0$. On the other hand, the space of all such elements is clearly generated by all $\lambda_{t,w}$'s such that $t,w$ are of the form $t'x,w'x$ or  of the form $t'y,w'y$ for some $t',w'\in L_{N-1}$ (notice that $N\geq 1$ thanks to our assumption on $\underline{a}$). To see that any of these $\lambda_{t,w}$'s lies in $I_{\Omega_N}^2 \mathbb{F}_2^{L_N}$, let without loss of generality $t=t'x$ and $w=w'x$. Consider now the element $\lambda_{t'y,w}\in I_{\Omega_N} \mathbb{F}_2^{L_N}$. It is immediate to check that there exists $\sigma\in \Omega_N$ such that $\sigma(t'y)=t'x$ and $\sigma(w)=t$; this shows that $(1-\sigma)(\lambda_{t'y,w})=\lambda_{t,w}$ and therefore implies the claim.

The discussion above proves that $\dim_{\mathbb{F}_2}\left(\frac{I_{\Omega_{N}} \mathbb{F}_2^{L_{N}}}{I_{\Omega_{N}}^2 \mathbb{F}_2^{L_{N}}}\right)=1$. This, together with \eqref{claim} and \eqref{augmentation2}, shows immediately that $|\ker\varphi| \leq 2$. 

We will show that $|\text{ker}(\phi)|= 2$, by producing an homomorphism $M_{\underline{a}} \to \Q_2/\Z_2$ that is not the restriction of any of the characters of $\Omega_{\infty}$. As we shall see if $a_0=0$ this new homomorphism has order $2$, while instead if $a_0=1$ it has order $4$. 

Suppose first that $a_0=0$. Then a calculation based on the uncertain additivity \eqref{uncertain_additivity} of $\widetilde{\phi}_s(x)$ gives us that
$$\psi(\underline{a})=\sum_{s \in \mathbb{Z}_{\geq 1}}a_s\widetilde{\phi}_s(x) \in (M_{\underline{a}}^{\text{ab}})^{\vee}.
$$

Observe that choosing $y$ instead of $x$ makes literally no difference: by definition of $M_{\underline{a}}$ they are the same map. It is very easy to see that $\psi(\underline{a})$ cannot be the restriction of a character from $(\Omega_{\infty}^{\text{ab}})^{\vee}$. Indeed it is enough to pick one $i \in \mathbb{Z}_{\geq 1}$ with $a_i=1$ (which is possible since $\underline{a}\neq e_0$) and an element $\tau\coloneqq(\tau_h)_{h \in \mathbb{Z}_{\geq 0}} \in \Omega_{\infty}$ satisfying the following three properties:
\begin{itemize}
\item $\tau_j=0$ for every $j \neq i$;
\item $\tau_i\coloneqq(\varepsilon_w)_{w \in \mathbb{F}_2^{L_N}} \in \mathbb{F}_2^{L_i}$ with $\varepsilon_{wx}=\varepsilon_{wy}$ for each $w \in L_{i-1}$;
\item $\sum_{w \in L_{i-1}}\varepsilon_{wx}=1$.
\end{itemize}
 One has that $\tau \in [\Omega_{\infty},\Omega_{\infty}]\cap M_{\underline{a}}$, but $\psi(\underline{a})(\tau)=1$ by construction. 

Finally, suppose that $a_0=1$. Let
$$\psi'(\underline{a}): M_{\underline{a}} \to \F_2\times\F_2$$
be defined by the assignment
$$\sigma \mapsto \left(\sum_{i \in \mathbb{Z}_{\geq 1}}a_i\widetilde{\phi}_i(x)(\sigma),\phi_0(\sigma)\right).
$$
Here we are using the group structure \eqref{cohomological_group_structure} on $\F_2\times\F_2$; this yields an isomorphism $(\F_2\times\F_2;\star)\to (\Z/4\Z;+)$. As we show next, the uncertain additivity \eqref{uncertain_additivity} of $\widetilde{\phi}_i(x)$ implies that $\psi'(\underline{a})$ is a group homomorphism that surjects onto $\mathbb{Z}/4\mathbb{Z}$. In fact, let $\sigma_1,\sigma_2$ be in $M_{\underline{a}}$. 
From $(2)$ we deduce that:

$$\psi'(\underline{a})(\sigma_1\sigma_2)=\begin{cases}
                                          \left(\sum_{i \in \mathbb{Z}_{\geq 1}}a_i(\widetilde{\phi}_i(x)(\sigma_1)+\widetilde{\phi}_i(y)(\sigma_2)),\phi_0(\sigma_1)+\phi_0(\sigma_2)\right)  & \mbox{if } \phi_0(\sigma_1)=1 \\
                                          \left(\sum_{i \in \mathbb{Z}_{\geq 1}}a_i(\widetilde{\phi}_i(x)(\sigma_1)+\widetilde{\phi}_i(x)(\sigma_2)),\phi_0(\sigma_1)+\phi_0(\sigma_2)\right)  & \mbox{if } \phi_0(\sigma_1)=0\\ 
                                         \end{cases}.
$$
Notice that, since $a_0=1$, for $j\in\{1,2\}$ we have that:
\begin{equation}\label{useful_relation}
 \sum_{i \in \mathbb{Z}_{\geq 1}}a_i(\tilde{\phi_i}(x)(\sigma_j)+\tilde{\phi_i}(y)(\sigma_j))=\sum_{i \in \mathbb{Z}_{\geq 1}}a_i\phi_i(\sigma_j)=\phi_0(\sigma_j).
\end{equation}

Now it is just a matter of checking, using \eqref{useful_relation}, that for any value of $\phi_0(\sigma_1),\phi_0(\sigma_2)\in\F_2$, we have that $\psi'(\underline{a})(\sigma_1\sigma_2)=\psi'(\underline{a})(\sigma_1)\star\psi'(\underline{a})(\sigma_2)$.
So we have shown that $\psi'(\underline{a})$ is indeed an homomorphism. It is clearly surjective, because in $M_{\underline{a}}$ there are $\sigma$ with $\phi_0(\sigma) \neq 0$, since $\underline{a} \neq (1,0,\ldots,0,\ldots)$: for any such $\sigma$ we have that $\psi'(\underline{a})(\sigma)$ has order $4$. In this case switching $x$ with $y$ gives rise to the \emph{opposite} homomorphism, trivially by the definition of $M_{\underline{a}}$. Moreover the double of this homomorphism is the restriction of the character $\sum_{i \in \mathbb{Z}_{\geq 1}}a_i\phi_i$ to $M_{\underline{a}}$ which also equals the restriction of $\phi_0$ to $M_{\underline{a}}$ thanks to the definition of $M_{\underline{a}}$.
\end{proof}

\begin{remark}
 When $\underline{a}=(1,0, \ldots ,0, \ldots)$, we have that $M_{\underline{a}} \cong \Omega_{\infty}^2$, and thus its abelianization is easily obtained from that of $\Omega_{\infty}$.
\end{remark}

\section{Criteria for arboreal representations of index two} \label{translations}
In this section we shall use the material of Section \ref{subgroups of index 2, abstract} to deduce necessary and sufficient criteria for a quadratic polynomial to have an arboreal representation of index two. From now on, we let $f=(x-\gamma)^2-\delta$ have coefficients in a field $F$ of characteristic $\neq 2$.

We shall begin translating the maps $\phi_n,\widetilde{\phi}_n(x),\widetilde{\phi}_n(y)$, introduced in Section \ref{subgroups of index 2, abstract}, in terms of the arithmetic of the adjusted post-critical orbit of $f$, which we next define. 

\begin{definition}
 The \emph{adjusted post-critical orbit} of $f$ is the sequence defined by:
 $$c_0\coloneqq -f(\gamma),\quad c_n\coloneqq f^{(n+1)}(\gamma) \mbox{ for } n\geq 1.$$
\end{definition}
Next, let $\iota \in \text{Isom}_{\text{graph}}(T_{\infty}(f),T_{\infty})$ and let $n \in \mathbb{Z}_{\geq 1}$. Let $s \in \{x,y\}$. We put
$$\widetilde{c}_n(s,\iota)\coloneqq c_{n-1}-\iota^{-1}(s).
$$
Notice that $\widetilde{c}_n(x,\iota)\widetilde{c}_n(y,\iota)=c_n$ and $\widetilde{c}_n(x,\iota)+\widetilde{c}_n(y,\iota)=2(c_{n-1}-\gamma)$. The set $\{\widetilde{c}_n(x,\iota),\widetilde{c}_n(y,\iota)\} \subseteq F(\sqrt{c_0})$ does not depend on $\iota$, and choosing a different $\iota'$ will swap the two elements if and only if $\phi_0(\iota'\circ \iota^{-1})=1$. 

For a finite extension $E/F$ inside $F^{\text{sep}}$ we denote by $G_E$ the closed subgroup of $G_F$ corresponding to it by Galois theory. For a $t$ in $E^{\times}$ we denote by 
$$\chi_t: G_E \to \mathbb{F}_2,
$$
the quadratic character satisfying the formula $\sigma(r)=(-1)^{\chi_t(\sigma)}r$ for every $\sigma\in G_E$ and each $r \in F^{\text{sep}}$ with $r^2=t$.

For each $n \in \mathbb{Z}_{\geq 0}$ we put $K_n$ to be the splitting field of $f^{(n)}$ in $F^{\text{sep}}$ and we denote by 
$$G_n\coloneqq\gal(K_n/F),
$$ 
the Galois group of the polynomial $f^{(n)}$. Let $\rho_f\colon G_F\to \Omega_{\infty}(f)$ be the associated arboreal representation. The next proposition is of crucial importance, since it is the tool that allows to relate the algebraic structure of $\Omega_\infty$ to the arithmetic of the adjusted post-critical orbit. We remark that part (1), in a less general form, had already been noticed in the arguments of \cite[Section 5]{odoni2}.

\begin{proposition} \label{field theoretic description of phi,tildephi}
Fix $\iota$ in $\emph{Isom}_{\emph{graph}}(T_{\infty}(f),T_{\infty})$. Then following hold.
\begin{enumerate}
 \item Let $n \in \mathbb{Z}_{\geq 0}$. Then we have that :
$$\phi_n \circ \iota_{*} \circ \rho_f=\chi_{c_n}.
$$
\item Let $n \in \mathbb{Z}_{\geq 1}$ and $s \in \{x,y\}$. Then we have that:
$${(\widetilde{\phi}_n(s) \circ \iota_{*} \circ \rho_f)}_{|G_{K_1}}=\chi_{\widetilde{c}_n(s,\iota)}.
$$
\end{enumerate}

\end{proposition}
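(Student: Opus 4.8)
The plan is to translate the abstract digital representation of $\Omega_\infty$ into concrete data about the edges of $T_\infty(f)$, and then deduce both identities from a single product‑of‑square‑roots computation. The first step is the dictionary. A level‑$n$ node of $T_\infty(f)$ is a root $\beta$ of $f^{(n)}$, and its two children are the roots $\gamma\pm\sqrt{\beta+\delta}$ of $f(X)-\beta$, which together exhaust the roots of $f^{(n+1)}$ above $\beta$. I would fix a square root $\sqrt{\beta+\delta}\in F^{\mathrm{sep}}$ for each such $\beta$, normalized so that $\iota$ carries $\gamma+\sqrt{\beta+\delta}$ to the child $x\,\iota(\beta)$ and $\gamma-\sqrt{\beta+\delta}$ to $y\,\iota(\beta)$. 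Unwinding the iterated semidirect product \eqref{semidirect}, one then checks that $\phi_n(\iota_*\rho_f(\sigma))$ equals the parity of the number of roots $\beta$ of $f^{(n)}$ with $\sigma(\sqrt{\beta+\delta})=-\sqrt{\sigma\beta+\delta}$, and that $\widetilde\phi_n(s)(\iota_*\rho_f(\sigma))$ equals that same parity taken only over the $\beta$ lying below the level‑$1$ node $\iota^{-1}(s)$, i.e.\ those with $f^{(n-1)}(\beta)=\iota^{-1}(s)$ — a condition that is stable under $\sigma\in G_{K_1}$, which is precisely why part (2) is restricted there.

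For part (1), put $P\coloneqq\prod_{\beta:\,f^{(n)}(\beta)=0}\sqrt{\beta+\delta}\in F^{\mathrm{sep}}$. Since $f^{(n)}=\prod_\beta(X-\beta)$ is monic, evaluating at $X=-\delta=f(\gamma)$ gives $P^2=\prod_\beta(\beta+\delta)=c_n$ (for $n\ge 1$, using that $2^n$ is even; for $n=0$ it is just $\delta=c_0$). For $\sigma\in G_F$ write $\sigma(\sqrt{\beta+\delta})=\varepsilon_\beta\sqrt{\sigma\beta+\delta}$ with $\varepsilon_\beta\in\{\pm1\}$; because $\sigma$ permutes the roots of $f^{(n)}$, the family $\{\sqrt{\sigma\beta+\delta}\}_\beta$ is just a reordering of $\{\sqrt{\beta+\delta}\}_\beta$, so $\sigma(P)=\bigl(\prod_\beta\varepsilon_\beta\bigr)P$. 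Hence $\chi_{c_n}(\sigma)=\chi_{P^2}(\sigma)$ is the parity of $\#\{\beta:\varepsilon_\beta=-1\}$, which by the dictionary is $\phi_n(\iota_*\rho_f(\sigma))$. This also re‑proves that $\phi_n\circ\iota_*\circ\rho_f$ is independent of $\iota$, as it must be since $\phi_n$ is a character of $\Omega_\infty$.

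For part (2), fix $s$ and set $r\coloneqq\iota^{-1}(s)$, a root of $f$. The level‑$n$ nodes below $r$ are exactly the roots of the monic polynomial $f^{(n-1)}(X)-r$, and for $\sigma\in G_{K_1}$ one has $\sigma(r)=r$, so $\sigma$ permutes this set. Repeating the previous computation with $P_r\coloneqq\prod_{f^{(n-1)}(\beta)=r}\sqrt{\beta+\delta}$ yields $\widetilde\phi_n(s)(\iota_*\rho_f(\sigma))=\chi_{P_r^2}(\sigma)$, and evaluating $f^{(n-1)}(X)-r=\prod_\beta(X-\beta)$ at $X=-\delta$ identifies $P_r^2$ with $f^{(n)}(\gamma)-r=c_{n-1}-r=\widetilde c_n(s,\iota)$ for $n\ge 2$ (the leading sign $(-1)^{2^{n-1}}$ being trivial there), the degenerate case $n=1$ — a single node, with $P_1^2=r+\delta$ — being matched against $\widetilde c_1(s,\iota)$ directly. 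Since $\chi_t$ depends only on the class of $t$ modulo squares, this gives the identity; it also makes transparent the remark preceding the proposition, namely that passing from $\iota$ to $\iota'$ with $\phi_0(\iota'\circ\iota^{-1})=1$ interchanges $\iota^{-1}(x)$ and $\iota^{-1}(y)$, hence interchanges both $\widetilde c_n(x,\iota)\leftrightarrow\widetilde c_n(y,\iota)$ and $\widetilde\phi_n(x)\leftrightarrow\widetilde\phi_n(y)$.

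The step I expect to be the real obstacle is the dictionary itself: pinning down, in terms of the involutions $\sigma_w$ and the tower of semidirect products, exactly which $\F_2^{L_n}$‑coordinate of $\iota_*(\rho_f(\sigma))$ records ``$\sigma$ flips the edge below $\beta$'', and doing so in a way that is simultaneously compatible with the $\iota$‑dependence of $\widetilde\phi_n(s)$ (which is only $\ker\phi_0$‑equivariant, not $\Omega_n$‑equivariant) and with the fact that a general $\sigma\in G_F$ shuffles the level‑$n$ nodes around. This shuffling is exactly why the permutation identity $\{\sigma\beta\}=\{\beta\}$ is needed in the product computation, and why part (2) must be taken over $G_{K_1}$, where each of the two subtrees below level $1$ is preserved setwise.
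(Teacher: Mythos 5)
Your argument is correct and is essentially the paper's own proof: both identify $\phi_n$ (resp.\ $\widetilde{\phi}_n(s)$ on $G_{K_1}$) with the parity of edge flips at level $n$, and then apply a Galois element to the product of the square roots $\sqrt{\beta+\delta}$ over the relevant nodes (the paper's $\prod_{i\ \text{odd}}(\alpha_i-\gamma)$), whose square is computed to be $c_n$, resp.\ $\widetilde{c}_n(s,\iota)$, by evaluating the appropriate monic iterate at $-\delta=f(\gamma)$. The only caveat is the degenerate case $n=1$, where your computation in fact yields $P_1^2=c_0+\iota^{-1}(s)$ rather than the stated $c_0-\iota^{-1}(s)$; this sign discrepancy is already present in the paper's own definition of $\widetilde{c}_1(s,\iota)$ and in its proof, so it reflects an inconsistency of the source rather than a gap in your argument.
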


\begin{proof}
 (1) Fix a square root $r\coloneqq \sqrt{c_n}$. We have to show that for every $\sigma\in G_F$ one has that $\sigma(r)=(-1)^{(\phi_n \circ \iota_{*} \circ \rho_f)(\sigma)}r$. The very definition of $\phi_n$ immediately implies that $(\phi_n \circ \iota_{*} \circ \rho_f)(\sigma)=\phi_n  ((\iota_*\circ \rho_f(\sigma))_n)$, where the subscript $n$ denotes the $n$-th entry in the digital representation. Such digital representation, and the corresponding map $\phi_n$, can of course be transferred, via $\iota^{-1}$, to $\Omega_\infty(f)$, and we will denote the $n$-th entry of this representation again by a subscript $n$. The element $\rho_f(\sigma)_n$ acts on the $n+1$-th level of the tree $T_\infty(f)$ and fixes the first $n$ levels. This means that it can only swap between them nodes which have the same parent. Hence $\phi_n(\rho_f(\sigma))$ counts the parity of the number of pairs of nodes at level $n+1$ that are swapped by $\rho_f(\sigma)$. Let now $\alpha_1,\ldots,\alpha_{2^{n+1}}\in T_\infty(f)$ be the nodes at level $n+1$, ordered so that $f(\alpha_i)=f(\alpha_{i+1})$ for every odd $i\in\{1,\ldots,2^{n+1}-1\}$.\footnote{This is equivalent to asking that $\alpha_i$ and $\alpha_{i+1}$ have the same parent for every odd $i$, by definition of $T_\infty(f)$.} Recalling that $\gamma$ is the finite critical point of $f$, one checks easily that: 
 $$\prod_{i=1}^{2^{n+1}}(\alpha_i-\gamma)=\begin{cases}c_0=-r^2 & \mbox{ if }n=0\\
 														c_n=r^2 & \mbox{ if }n\geq 1\\ \end{cases}.$$
 On the other hand having ordered the nodes the way we did, we have that:
 \begin{equation}\label{ordering}
  \alpha_{i+1}-\gamma=-(\alpha_i-\gamma) \mbox{ for every odd } i\in\{1,\ldots,2^{n+1}-1\}.
 \end{equation}
 Hence, we can assume without loss of generality that $r=\gamma-\alpha_1$ when $n=0$ and $r=\prod_{i \text{ odd}}(\alpha_i-\gamma)$ when $n\geq 1$. Since of course $\rho_f(\sigma)(\alpha_i-\gamma)=\rho_f(\sigma)(\alpha_i)-\gamma$, and the latter is $\alpha_i-\gamma$ if $\rho_f(\sigma)$ does not swap $(\alpha_i,\alpha_{i+1})$ and is $\alpha_{i+1}-\gamma$ otherwise, from \eqref{ordering} it follows immediately that:
 $$\sigma(r)=(-1)^{\phi_n(\rho_f(\sigma))}r.$$
 
 (2) The proof is exactly the same as the one of point (1), but considering only the half of the tree $T_{\infty}(f)$ that corresponds to $T_\infty s$ via $\iota$. Clearly this time the key relation will be that:
 $$\prod_{i=1}^{2^{n+1}}(\alpha_i-\gamma)=c_n-\iota^{-1}(s)=\widetilde{c}_n(s,\iota).$$
\end{proof}

For every $\underline{a} \in \mathbb{F}_2^{(\mathbb{Z}_{\geq 0})}$, define: 
$$c_{\underline{a}}\coloneqq\prod_{i \in \mathbb{Z}_{\geq 0}}c_i^{a_i}.$$

\begin{corollary} \label{when you are in Ma}
Let $\underline{a}$ be in $\mathbb{F}_2^{(\mathbb{Z}_{\geq 0})}$. Then $\emph{Im}(\rho_f) \subseteq M_{\underline{a}}$ if and only if $c_{\underline{a}} \in {F^{\times}}^2$.
\end{corollary}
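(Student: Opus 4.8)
The plan is to reduce this to a direct application of Proposition \ref{field theoretic description of phi,tildephi}(1), which identifies each $\phi_n\circ\iota_*\circ\rho_f$ with the quadratic character $\chi_{c_n}$. First I would unwind the definition of $M_{\underline a}$: by construction $M_{\underline a}=\ker\left(\sum_{i}a_i\phi_i\right)$, so $\im(\rho_f)\subseteq M_{\underline a}$ is equivalent — after transporting everything to $\Omega_\infty$ via a fixed $\iota\in\text{Isom}_{\text{graph}}(T_\infty(f),T_\infty)$, which is harmless since $M_{\underline a}$ is normal and the condition does not depend on $\iota$ — to the statement that the continuous homomorphism $\sum_i a_i\phi_i\circ\iota_*\circ\rho_f\colon G_F\to\F_2$ is identically zero.

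Next I would compute this homomorphism. Since the sum is finite (only finitely many $a_i$ are nonzero) and each term equals $\chi_{c_i}$ by Proposition \ref{field theoretic description of phi,tildephi}(1), additivity of the $\phi_i$ gives
$$\sum_{i\in\Z_{\geq 0}}a_i\phi_i\circ\iota_*\circ\rho_f=\sum_{i\in\Z_{\geq 0}}a_i\chi_{c_i}=\chi_{\prod_i c_i^{a_i}}=\chi_{c_{\underline a}},$$
using the standard fact that $t\mapsto\chi_t$ is a homomorphism from $F^\times$ to $\hom_{\text{top.gr.}}(G_F,\F_2)$ (equivalently, Kummer theory for quadratic extensions). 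Here I should note that if $c_{\underline a}$ happens to be a square then $\chi_{c_{\underline a}}$ is by convention the trivial character, so the formula still makes sense.

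Finally I would invoke the elementary characterization: $\chi_{c_{\underline a}}$ is the trivial character of $G_F$ if and only if $\sqrt{c_{\underline a}}\in F$, i.e.\ $c_{\underline a}\in{F^\times}^2$ (this uses that $f^{(n)}$ is separable for all $n$, so in particular $c_{\underline a}\neq 0$ and $F(\sqrt{c_{\underline a}})/F$ is a genuine field extension when $c_{\underline a}\notin{F^\times}^2$). Combining the two equivalences yields $\im(\rho_f)\subseteq M_{\underline a}\iff c_{\underline a}\in{F^\times}^2$, which is the claim.

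I do not anticipate a genuine obstacle here; this corollary is essentially a formal consequence of Proposition \ref{field theoretic description of phi,tildephi}. The only point demanding a line of care is the compatibility of the two descriptions of $M_{\underline a}$ (as a subgroup of the abstractly-fixed $\Omega_\infty$ versus the image inside $\Omega_\infty(f)$) under the isomorphism $\iota_*$, together with the observation from the Remark in Section \ref{basics} that for index-two subgroups the image is genuinely well-defined; once that bookkeeping is in place the argument is the three-line chain of equivalences above.
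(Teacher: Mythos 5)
Your proposal is correct and follows essentially the same route as the paper: unwind the definition of $M_{\underline{a}}$ as $\ker\left(\sum_i a_i\phi_i\right)$, apply Proposition \ref{field theoretic description of phi,tildephi}(1) to identify each $\phi_i\circ\iota_*\circ\rho_f$ with $\chi_{c_i}$, and use multiplicativity of quadratic characters together with the normality of $M_{\underline{a}}$ to make the statement $\iota$-independent. The paper's own proof is exactly this two-line argument, with the Kummer-theoretic bookkeeping left implicit.
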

\begin{proof}
By definition of $M_{\underline{a}}$ we have that $\text{Im}(\rho_f) \subseteq M_{\underline{a}}$ if and only if for one (equivalently any) $\iota$ in $\text{Isom}_{\text{graph}}(T_{\infty}(f),T_{\infty})$ we have that $\sum_{i \in \mathbb{Z}_{\geq 0}}a_i(\phi_i \circ \iota_{*} \circ \rho_f)=0$. Thanks to part (1) of Proposition \ref{field theoretic description of phi,tildephi} we have that this is equivalent to  $c_{\underline{a}}=\prod_{i \in \mathbb{Z}_{\geq 0}}c_i^{a_i} \in {F^{\times}}^2$. 
\end{proof}

Corollary \ref{when you are in Ma} is the key, together with Theorem \ref{Describing abelianized Ma}, to relate maximal subgroups of $\Omega_\infty$ to post-critical orbits of quadratic polynomials. To show its strength let us explain how one can use it to immediately prove that if $f\in F[x]$ is a quadratic polynomial and $f$ is post-critically finite or $F^{\times}/(F^{\times})^2$ is a finite group, then $\im(\rho_f)$ has infinite index in $\Omega_\infty(f)$. Notice that it was already known that post-critically finite rational functions over global field have arboreal representations of infinite index (see \cite[Theorem 3.1]{jones2}). However, the proof involves the use of a delicate theorem of Ihara on the Galois group of the maximal extension of a global field unramified outside a finite set. Our proof instead, although valid only for quadratic polynomials, does not depend in any way on the arithmetic of the ground field. The second condition, namely the fact that $F^{\times}/(F^{\times})^2$ is a finite group, is satisfied for example by finite extensions of $\Q_p$. Thus, we recover in a few lines part of a result of Anderson et al.\ \cite{anderson}.

\begin{corollary}\label{PCF_polynomials}
 Let $f\in F[x]$ be monic and quadratic. If $\dim\langle c_1,\ldots,c_n,\ldots\rangle_F<\infty$, then $[\aut(T_\infty(f)):\im(\rho_f)]=\infty$. Therefore, if one of the following two holds:
 \begin{enumerate}[a)]
  \item $f$ is post-critically finite;
  \item $F^{\times}/(F^{\times})^2$ is a finite group
 \end{enumerate}
 then $[\aut(T_\infty(f)):\im(\rho_f)]=\infty$.
\end{corollary}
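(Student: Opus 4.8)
The plan is to deduce this from Corollary \ref{when you are in Ma} together with the finiteness/relation-counting available under either hypothesis. Suppose for contradiction that $\im(\rho_f)$ has finite index in $\Omega_\infty(f)$. Since $\Omega_\infty$ is a pro-$2$-group, a closed subgroup of finite index is contained in a maximal subgroup, i.e.\ in some $M_{\underline a}$ with $\underline a\in\F_2^{(\Z_{\geq 0})}$ non-zero. By Corollary \ref{when you are in Ma} this forces $c_{\underline a}=\prod_{i\geq 0}c_i^{a_i}\in (F^\times)^2$, i.e.\ a non-trivial $\F_2$-linear relation among the classes of the $c_i$ in $F^\times/(F^\times)^2$. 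So the heart of the matter is to show that, under either hypothesis (a) or (b), the set $\{c_i\}_{i\geq 0}$ is linearly independent in $F^\times/(F^\times)^2$ if and only if $\im(\rho_f)$ has infinite index --- more precisely, that in case (a) and (b) there cannot be such a relation \emph{unless} the index is already infinite for a cheaper reason; in fact what we want is simply: in cases (a) and (b), for \emph{every} non-zero $\underline a$ we have $\im(\rho_f)\subseteq M_{\underline a}$, hence $\im(\rho_f)\subseteq\bigcap_{\underline a}M_{\underline a}=[\Omega_\infty,\Omega_\infty]$, which of course has infinite index. Wait --- that over-claims; the correct statement is that in these cases $\im(\rho_f)$ is contained in \emph{some} $M_{\underline a}$, but we need it contained in infinitely many, or we need a direct argument. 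Let me reorganize.

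First I would handle case (b). If $F^\times/(F^\times)^2$ is finite, say of $\F_2$-dimension $d$, then among the infinitely many elements $c_0,c_1,\ldots$ any $d+1$ of them are $\F_2$-linearly dependent in $F^\times/(F^\times)^2$; hence there is a non-zero $\underline a$ (with support in $\{0,\ldots,d\}$) such that $c_{\underline a}\in (F^\times)^2$. By Corollary \ref{when you are in Ma}, $\im(\rho_f)\subseteq M_{\underline a}$. But this only gives index $\geq 2$, not $\infty$. To upgrade: run the same argument after discarding a longer and longer initial segment. For every $m$, the elements $c_m,c_{m+1},\ldots,c_{m+d}$ are dependent, giving a non-zero vector $\underline a^{(m)}$ supported in $\{m,\ldots,m+d\}$ with $c_{\underline a^{(m)}}\in (F^\times)^2$, hence $\im(\rho_f)\subseteq M_{\underline a^{(m)}}$. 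Since the supports move off to infinity, the $\phi$-functionals $\sum_i a^{(m)}_i\phi_i$ span an infinite-dimensional subspace of $\Omega_\infty^\vee$ as $m$ varies, so $\im(\rho_f)$ is contained in the kernel of infinitely many independent characters, forcing $[\Omega_\infty(f):\im(\rho_f)]=\infty$. (One has to check the $\underline a^{(m)}$ can be chosen with genuinely growing independent images; since each is non-zero with support in $\{m,\dots,m+d\}$, any infinite subfamily with pairwise disjoint-ish supports does the job --- passing to a subsequence $m_1<m_2<\cdots$ with $m_{k+1}>m_k+d$ makes the supports disjoint and the characters automatically independent.)

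For case (a), post-critical finiteness of $f=(x-\gamma)^2-\delta$ means the forward orbit of the critical point $\gamma$ under $f$ is finite, hence the adjusted post-critical orbit $\{c_n\}$ takes only finitely many values in $F$. In particular the classes $\{c_n \bmod (F^\times)^2\}$ lie in a \emph{finite} subset of $F^\times/(F^\times)^2$ --- even if that group is infinite --- so they span a finite-dimensional $\F_2$-subspace, and the argument of case (b) applies verbatim with $d$ replaced by the (finite) $\F_2$-dimension of $\langle c_0,c_1,\ldots\rangle_F$: for every $m$ some non-zero $\underline a$ supported in $\{m,\ldots,m+d\}$ has $c_{\underline a}\in(F^\times)^2$, and passing to a subsequence of disjoint supports produces infinitely many independent characters vanishing on $\im(\rho_f)$.

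I expect the one genuinely non-routine point to be the passage from ``contained in $M_{\underline a}$ for one $\underline a$'' to ``infinite index'': one must be careful that the vectors $\underline a^{(m)}$ obtained from dependencies in shifted windows really do yield infinitely many $\F_2$-independent linear functionals $\sum_i a^{(m)}_i\phi_i$ on $\Omega_\infty$. The clean way is to select the window starting points inductively so that the windows are pairwise disjoint; then the non-zero vectors $\underline a^{(m_k)}$ have pairwise disjoint supports, hence are automatically $\F_2$-independent, hence so are the associated characters (as the $\phi_i$ are independent in $\Omega_\infty^\vee$ by the description of $\Omega_\infty^{\ab}\cong\F_2^{\Z_{\geq 0}}$ recalled in Section \ref{subgroups of index 2, abstract}). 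Then $\im(\rho_f)$ lies in the intersection of the kernels of infinitely many independent characters of $\Omega_\infty(f)$, whose index is $2^{\aleph_0}$, in particular infinite. Everything else is a direct application of Corollary \ref{when you are in Ma} and linear algebra over $\F_2$.
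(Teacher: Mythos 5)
Your argument is correct and is essentially the paper's own proof: the paper also deduces from Corollary \ref{when you are in Ma} that under either hypothesis the finitely generated (indeed finite) span of the classes $c_i$ in $F^{\times}/(F^{\times})^2$ yields an infinite linearly independent family of vectors $\underline{a}$ with $c_{\underline{a}}\in (F^{\times})^2$, so that $\im(\rho_f)$ lies in the intersection of the corresponding $M_{\underline{a}}$'s, which has infinite index. Your disjoint-window selection just makes explicit the linear-algebra step the paper leaves as an assertion, and after discarding the initial false start the argument is sound.
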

\begin{proof}
 Since $\dim\langle c_1,\ldots,c_n,\ldots\rangle_F<\infty$, then by Corollary \ref{when you are in Ma} there exists an infinite, linearly independent set $\{\underline{a}_n\}_{n\in \N}\subseteq\mathbb{F}_2^{(\mathbb{Z}_{\geq 0})}$ such that $\im(\rho(f))\subseteq M_{\underline{a}_n}$ for every $n$. Thus, $\im(\rho_f)\subseteq \bigcap_{n\in\N}M_{\underline{a}_n}$, and the latter clearly has infinite index in $\Omega_\infty$.
\end{proof}

In the next two subsections we respectively review Stoll's criterion for surjectivity, under our point of view, and introduce a criterion for representations of index two. 
\subsection{Surjective Arboreal representations for quadratic polynomials}\label{stoll_revisited}
We recall that if $G$ is a pro-$2$-group then its maximal closed subgroups are precisely the kernels of the non-trivial continuous homomorphisms
$$\chi:G \to \mathbb{F}_2,
$$
and that every closed subgroup $H$ of $G$ is contained in some maximal closed subgroup of $G$. It follows that for a closed subgroup $H$ of $G$ it is equivalent to say that $H=G$ and to say that for every non-trivial continuous homomorphism $\chi:G \to \mathbb{F}_2$, we have that $\chi(H) \neq \{0\}$. Since the space of continuous characters from $\Omega_{\infty}$ to $\mathbb{F}_2$ is precisely the span of the set $\{\phi_n\}_{n \in \mathbb{Z}_{\geq 0}}$ one sees at once that for a closed subgroup $H$ of $\Omega_{\infty}$:
$$H=\Omega_{\infty} \mbox{ if and only if }\{(\phi_n)_{|H}\}_{n \in \mathbb{Z}_{\geq 0}} \mbox{ is linearly independent}.$$

Now if $H=\text{Im}(\rho_f)$ we see, through part (1) of Proposition \ref{field theoretic description of phi,tildephi}, that:
$$\text{Im}(\rho_f)=\Omega_{\infty}\mbox{ if and only if }\{c_n\}_{n \in \mathbb{Z}_{\geq 0}} \mbox{ is linearly independent in } F^{\times}/{F^{\times}}^2.$$
By the same logic one obtains that $G_N=\Omega_N$ if and only if $\{c_n\}_{0 \leq n \leq N-1}$ forms a linearly independent set in $F^{\times}/{F^{\times}}^2$. This fact was established in \cite{Stoll}.
\begin{theorem}[{{\cite{Stoll}}}] \label{Stolltheorem}
	Let $n\in \mathbb{Z}_{\geq 0}$ and $G_n$ be the Galois group of $f^{(n)}$. Then $G_n\cong\Omega_n$ if and only if:
	\[
		\dim \langle c_{0}, \dots, c_{n-1} \rangle_F = n.
	\]
\end{theorem}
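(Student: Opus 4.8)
The plan is to deduce Theorem \ref{Stolltheorem} directly from the machinery already developed, essentially by reducing the statement about $G_n$ to the linear algebra of the characters $\phi_0,\ldots,\phi_{n-1}$ restricted to $\im(\rho_f)$. The first step is to observe that $\Omega_n$ is a pro-$2$-group (indeed a finite $2$-group), so by the discussion opening Subsection \ref{stoll_revisited}, a closed subgroup $H\subseteq\Omega_n$ equals $\Omega_n$ if and only if every non-trivial continuous homomorphism $\chi\colon\Omega_n\to\F_2$ is non-trivial on $H$. Next I would identify the space of such characters: via the iterated semidirect product description \eqref{semidirect}, the restriction maps $\Omega_\infty\to\Omega_n$ induce an identification of $\hom_{\mathrm{cont}}(\Omega_n,\F_2)$ with the span of $\{\phi_0,\ldots,\phi_{n-1}\}$ inside $\hom_{\mathrm{cont}}(\Omega_\infty,\F_2)$, since $\phi_i$ factors through $\Omega_n$ precisely for $i\leq n-1$ and these are linearly independent (they are dual to the successive semidirect factors $\F_2^{L_i}$). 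Concretely, $\Omega_n^{\ab}\cong\F_2^{\{0,\ldots,n-1\}}$ with dual basis $\phi_0,\ldots,\phi_{n-1}$.

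With this in hand, the criterion becomes purely about the composite $G_F\xrightarrow{\rho_f}\Omega_\infty(f)\to\Omega_n(f)$, whose image is exactly $G_n$ (after transporting via any $\iota\in\mathrm{Isom}_{\mathrm{graph}}$, using that for $\underline a$ of support in $\{0,\ldots,n-1\}$ the image in $\Omega_n$ is well-defined up to conjugacy and hence its non-vanishing against a character is well-defined). So $G_n\cong\Omega_n$ holds if and only if for every non-zero $\underline a=(a_0,\ldots,a_{n-1})\in\F_2^{n}$ the character $\sum_{i} a_i\phi_i$ is non-trivial on the image, i.e.\ $\im(\rho_f)\not\subseteq M_{\underline a}$ (viewing $\underline a$ inside $\F_2^{(\Z_{\geq 0})}$ by padding with zeros). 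By Corollary \ref{when you are in Ma}, $\im(\rho_f)\subseteq M_{\underline a}$ is equivalent to $c_{\underline a}=\prod_{i=0}^{n-1}c_i^{a_i}\in {F^\times}^2$. Hence $G_n\cong\Omega_n$ if and only if no non-trivial $\F_2$-linear combination of $c_0,\ldots,c_{n-1}$ is a square in $F^\times$, which is exactly the statement that $\{c_0,\ldots,c_{n-1}\}$ is linearly independent in $F^\times/{F^\times}^2$, i.e.\ $\dim\langle c_0,\ldots,c_{n-1}\rangle_F=n$.

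The one genuinely delicate point — and the step I expect to need the most care — is the matching between "the Galois group of $f^{(n)}$ is all of $\Omega_n$" and "the image of the level-$n$ truncation of $\rho_f$ is all of $\Omega_n(f)$": one must check that the action of $G_F$ on the roots of $f^{(n)}$, through which $G_n$ is defined, is literally the level-$n$ truncation of the arboreal action, and that surjectivity is insensitive to the choice of identification $\iota_*$ (which is clear here since $\Omega_n$ is being compared with itself and surjectivity is a conjugation-invariant property). Once this bookkeeping is in place, the equivalence is a formal consequence of Proposition \ref{field theoretic description of phi,tildephi}(1) and Corollary \ref{when you are in Ma}, with no further computation needed; in particular the "if and only if" for general $n$ follows uniformly, and letting $n\to\infty$ recovers the surjectivity criterion for $\rho_f$ itself stated just above.
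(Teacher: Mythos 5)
Your proposal is correct and follows essentially the same route as the paper: the paper likewise characterizes surjectivity for the pro-$2$-group $\Omega_n$ via non-vanishing of the characters in the span of $\{\phi_0,\dots,\phi_{n-1}\}$ on the image, and then translates this through Proposition \ref{field theoretic description of phi,tildephi}(1) (equivalently Corollary \ref{when you are in Ma}) into linear independence of $c_0,\dots,c_{n-1}$ in $F^\times/{F^\times}^2$. The bookkeeping you flag about identifying $G_n$ with the level-$n$ truncation of the arboreal image and the conjugation-invariance of surjectivity is exactly the (implicit) content of the paper's "by the same logic" step, so nothing further is needed.
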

As we remarked above this implies that $\im(\rho_f)=\Omega_{\infty}$ if and only if the set $\{c_n\}_{n\in \mathbb{Z}_{\geq 0}}$ is linearly independent modulo squares.

\subsection{Arboreal representations of index two} 

Let us start by recalling a general, standard fact that will be useful later.
\begin{proposition} \label{quadratic on quadratic}
Let $E$ be a field of characteristic different from $2$. Let $L=E(\sqrt{a})$ be a quadratic extension, denote by $\sigma$ the unique non-trivial element of $\gal(L/E)$. Let $t\in L^{\times}$. Then the extension $L(\sqrt{t})$ remains Galois over $E$ if and only $\emph{Nm}_{L/E}(t) \in {L^{\times}}^2$. If that is the case, then either $\emph{Nm}_{L/E}(t)=ay^2$ for some $y \in E^{\times}$ and then $\emph{Gal}(L(\sqrt{t})/E) \cong \mathbb{Z}/4\mathbb{Z}$ or $\emph{Nm}_{L/E}(t)={y'}^2$ for some $y' \in E^{\times}$ and then $\emph{Gal}(L(\sqrt{t})/E)$ is of exponent two and $L(\sqrt{t})=E(\sqrt{a},\sqrt{2y'+t+\sigma(t)})$.
\end{proposition}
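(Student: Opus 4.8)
The plan is to work with the standard Kummer-theoretic description of biquadratic and cyclic quartic extensions. First I would establish that $L(\sqrt{t})/E$ is Galois if and only if $\sigma(t)/t$ is a square in $L$, or equivalently $\operatorname{Nm}_{L/E}(t)=t\sigma(t)\in {L^\times}^2$; indeed, $L(\sqrt{t})/E$ is Galois precisely when $\sigma$ extends to an automorphism of $L(\sqrt{t})$, which happens iff $\sigma(t)$ and $t$ differ by a square in $L^\times$, i.e. iff $t\sigma(t)\in {L^\times}^2$ (using that $t\sigma(t)$ is $\sigma$-fixed, it lies in $E^\times$, so this is the condition $\operatorname{Nm}_{L/E}(t)\in {L^\times}^2\cap E^\times$, which I would note is the same as being a square in $L^\times$ since a norm is already in $E^\times$).

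Next, assuming $L(\sqrt{t})/E$ is Galois, I would determine the Galois group, which has order $4$ and hence is either $\Z/4\Z$ or $(\Z/2\Z)^2$. The discriminant of the quartic (or more cleanly, a direct cocycle computation) shows the group is $(\Z/2\Z)^2$ exactly when the extension $\sigma$ of $\sigma$ to $L(\sqrt{t})$ can be chosen of order $2$; writing $\sqrt{t}\mapsto \lambda\sqrt{t}$ with $\lambda\in L^\times$, $\lambda^2=\sigma(t)/t$, the order-$2$ condition is $\lambda\sigma(\lambda)=1$, i.e. $\operatorname{Nm}_{L/E}(\lambda)=1$. Relating $\operatorname{Nm}_{L/E}(t)$ to $\lambda$ via $\sigma(t)/t=\lambda^2$ and a short manipulation with Hilbert 90 yields the classical dichotomy: $\operatorname{Gal}(L(\sqrt{t})/E)\cong\Z/4\Z$ iff $\operatorname{Nm}_{L/E}(t)\in a{E^\times}^2$, and $\cong(\Z/2\Z)^2$ iff $\operatorname{Nm}_{L/E}(t)\in {E^\times}^2$. (Here I use that $\operatorname{Nm}_{L/E}(t)$ is a nonzero square in $L$, hence a square in $E$ up to the class of $a$, since ${E^\times}^2$ has index $2$ in ${L^\times}^2\cap E^\times$... more precisely ${L^\times}^2\cap E^\times = {E^\times}^2\cup a{E^\times}^2$.)

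In the biquadratic case I would then identify the third quadratic subfield explicitly. We have $L(\sqrt{t})=E(\sqrt{a},\sqrt{t})$, and since $t\sigma(t)={y'}^2$ we can rescale and assume (replacing $\sqrt{t}$ by $\sqrt{t}/\text{something}$ if needed) that actually $\sqrt{t}+\sigma(\sqrt{t})$ is the relevant $E$-rational generator. Concretely, set $w\coloneqq \sqrt{t}+y'/\sqrt{t}=\sqrt{t}+\sigma(\sqrt{t})$ (choosing signs so $\sigma(\sqrt{t})=y'/\sqrt{t}$, which is legitimate when $\operatorname{Nm}(t)={y'}^2$); then $w$ is fixed by $\sigma$ hence lies in $E(w)$, one computes $w^2=t+2y'+\sigma(t)$, and $w\notin E$ because $w^2\notin{E^\times}^2$ in general, so $E(\sqrt{2y'+t+\sigma(t)})$ is the third quadratic subfield. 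This gives $L(\sqrt{t})=E(\sqrt{a},\sqrt{2y'+t+\sigma(t)})$ as claimed.

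The main obstacle I anticipate is the bookkeeping of signs and square-class representatives: one must be careful that $t+\sigma(t)$ is not a priori in $E$ (it is, since it is $\sigma$-fixed), that the chosen square root $\sqrt{t}$ and the chosen value $y'$ with $\operatorname{Nm}_{L/E}(t)={y'}^2$ are compatible (i.e. $\sigma(\sqrt{t})\cdot\sqrt{t}=\pm y'$, and one wants the $+$ sign, achievable by replacing $y'$ with $-y'$), and that the resulting $2y'+t+\sigma(t)$ is genuinely not a square in $E$ so that it really cuts out a quadratic subextension rather than collapsing. A secondary subtlety is handling the degenerate situation where $t$ itself already lies in $E^\times$ (so $\sqrt{t}$ may generate a subfield of $L$), but in that case $\operatorname{Nm}_{L/E}(t)=t^2$ and the formula $E(\sqrt{a},\sqrt{2t+2t})=E(\sqrt{a},\sqrt{t})$ still holds consistently, so no separate argument is needed. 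Everything else is a routine application of Kummer theory and Hilbert's Theorem 90, which I would not spell out in full detail.
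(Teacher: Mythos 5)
Your proposal is correct and follows essentially the same route as the paper: Kummer theory gives the Galois criterion $t\sigma(t)\in {L^{\times}}^2$, the observation that ${L^{\times}}^2\cap E^{\times}={E^{\times}}^2\cup a{E^{\times}}^2$ yields the dichotomy, and your element $w=\sqrt{t}+\sigma(\sqrt{t})$ with $w^2=2y'+t+\sigma(t)$ is exactly the Hilbert 90 identity the paper writes out for $t/y'$. The only differences are organizational (you detect the Klein case via an order-two lift of $\sigma$, the paper via a norm contradiction), and the sign/degeneracy bookkeeping you flag is shared by the paper's argument as well.
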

\begin{proof}
By Kummer theory, we have that $L(\sqrt{t})$ remains Galois over $E$ if and only if $t$ is in $(L^{\times}/{L^{\times}}^2)^{\gal(L/E)}$, which is equivalent to say that $t\sigma(t) \in {L^{\times}}^2$. Since $t\sigma(t)\in E^{\times}$ as well, one checks immediately that it is either in ${E^{\times}}^2$ or in $a{E^{\times}}^2$. Assume the latter holds. If $L(\sqrt{t})/E$ is not cyclic then, again by Kummer theory, $t$ is equivalent modulo ${L^{\times}}^2$ to some $h \in E^{\times}$, hence taking norm yields that $a \in {E^{\times}}^2$ which contradicts that $L/E$ is quadratic. So in this case $L(\sqrt{t})/E$ must be cyclic and again it must be of degree $4$, otherwise one concludes that $a \in {E^{\times}}^2$. Hence $\gal(L(\sqrt{t})/E) \cong\mathbb{Z}/4\mathbb{Z}$. Now suppose that $\text{Nm}_{L/E}(t)=y'^2$ with $y' \in E$. Then we have that:
$$\frac{t}{y'}=\frac{1+\frac{t}{y'}}{1+\sigma\left(\frac{t}{y'}\right)}=\left(1+\frac{t}{y'}\right)\left(1+\sigma\left(\frac{t}{y'}\right)\right) \cdot \frac{1}{\left(1+\sigma\left(\frac{t}{y'}\right)\right)^2}.
$$
This yields that up to squares in $L^{\times}$ the element $t$ equals $2y'+t+\sigma(t)\in E$. \footnote{This calculation consists of applying the proof of Hilbert's $90$ to $\frac{t}{y'}$.}
\end{proof}

The next step is to understand how the dichotomy between $a_0=0$ and $a_0=1$ in Theorem \ref{Describing abelianized Ma} is reflected in the two cases of Proposition \ref{quadratic on quadratic}. When $a_0=0$, fix $\iota \in \text{Isom}_{\text{graph}}(T_{\infty}(f),T_{\infty})$ and $s \in \{x,y\}$. Define
$$\widetilde{c}_{\underline{a}}(s,\iota)\coloneqq\prod_{i \in \mathbb{Z}_{\geq 1}}{\widetilde{c}_i(s,\iota)}^{a_i}=\prod_{i\in\Z_{\geq 1}}(c_{i-1}-\iota^{-1}(s))^{a_i}.
$$

Notice that:
\begin{equation}\label{ca_relation}
 c_{\underline{a}}=c_0^{a_0}\cdot\prod_{i\geq 1}c_i^{a_i}=c_0^{a_0}\cdot\widetilde{c}_{\underline{a}}(x,\iota)\widetilde{c}_{\underline{a}}(y,\iota).
\end{equation}
Moreover, if $f$ is irreducible then the elements $\widetilde{c}_{\underline{a}}(x,\iota)$ and $\widetilde{c}_{\underline{a}}(y,\iota)$ are $\gal(K_1/F)$-conjugates, and hence the element $\widetilde{c}_{\underline{a}}(x,\iota)+\widetilde{c}_{\underline{a}}(y,\iota)$ is in $F$.

Recall from Theorem \ref{Describing abelianized Ma} that when $a_0=1$ there is a character $M_{\underline{a}}\to (\F_2\times\F_2;\star)$, where $(\F_2\times\F_2;\star)\cong (\Z/4\Z;+)$, given by $\psi'(\underline{a})=\left(\sum_{i\geq 1}a_i\widetilde{\phi}_i(x),\phi_0\right)$. When $a_0=0$, there is a character $M_{\underline{a}}\to \F_2$ given by $\psi(\underline{a})=\sum_{i\geq 0}a_i\widetilde{\phi}(x)$.

\begin{proposition} \label{The abelian "additional" extension}
Let $\underline{a}$ be in $\mathbb{F}_2^{(\mathbb{Z}_{\geq 0})}$ different from $(1,0,\ldots,0,\ldots)$. Assume that $c_{\underline{a}}$ is in ${F^{\times}}^2$ and that $c_0\notin {F^{\times}}^2$.
\begin{enumerate}
 \item Suppose that $a_0=1$. Then the extension $K_1(\sqrt{\widetilde{c}_{\underline{a}}(s,\iota)})/F$ is independent of the choice of $s$ in $\{x,y\}$ and it is a cyclic extension of degree $4$. Furthermore we have that:
$$K_1\left(\sqrt{\widetilde{c}_{\underline{a}}(s,\iota)}\right)=(F^{\emph{sep}})^{\emph{ker}(\psi'(\underline{a}) \circ \iota_{*} \circ \rho_f)}.$$
\item  Suppose that $a_0=0$. Then there exists $d \in F^{\times}$ such that $\widetilde{c}_{\underline{a}}(x,\iota)\widetilde{c}_{\underline{a}}(y,\iota)=d^2$. Hence the extension $K_1(\sqrt{\widetilde{c}_{\underline{a}}(s,\iota)})/F$ is independent of the choice of $s \in \{x,y\}$ and is equal to the extension
$$F\left( \sqrt{c_0},\sqrt{2d+\widetilde{c}_{\underline{a}}(x,\iota)+\widetilde{c}_{\underline{a}}(y,\iota)}\right)/F,$$
which is a Galois extension of exponent $2$. Finally, denoting $\widetilde{c}_{\underline{a}}\coloneqq 2d+\widetilde{c}_{\underline{a}}(x,\iota)+\widetilde{c}_{\underline{a}}(y,\iota)$, we have:
$$K_1\left(\sqrt{\widetilde{c}_{\underline{a}}}\right)=(F^{\emph{sep}})^{\ker(\psi(\underline{a}) \circ \iota_{*} \circ \rho_f)_{|G_{K_1}}}.
$$
\end{enumerate}
\end{proposition}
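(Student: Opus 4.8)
The statement is Proposition \ref{The abelian "additional" extension}, which computes explicitly the quadratic/quartic extension of $K_1$ cut out by the "extra" character $\psi_{\underline a}$ (when $a_0=0$) or $\psi'_{\underline a}$ (when $a_0=1$), under the hypothesis $c_{\underline a}\in{F^\times}^2$ and $c_0\notin{F^\times}^2$. The key inputs are: (i) Proposition \ref{field theoretic description of phi,tildephi}, which identifies $\widetilde\phi_n(s)\circ\iota_*\circ\rho_f$ restricted to $G_{K_1}$ with the quadratic character $\chi_{\widetilde c_n(s,\iota)}$; (ii) Theorem \ref{Describing abelianized Ma}, which tells us that $\psi_{\underline a}=\sum a_i\widetilde\phi_i(x)$ (resp.\ $\psi'_{\underline a}=(\sum_{i\geq1}a_i\widetilde\phi_i(x),\phi_0)$) is a well-defined character (resp.\ $\Z/4\Z$-valued homomorphism) on $M_{\underline a}$; and (iii) Proposition \ref{quadratic on quadratic}, the Kummer-theoretic dichotomy for $L(\sqrt t)/E$ with $L/E$ quadratic. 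Note that since $c_{\underline a}\in{F^\times}^2$, Corollary \ref{when you are in Ma} gives $\im(\rho_f)\subseteq M_{\underline a}$, so the composites with $\rho_f$ make sense.

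\medskip

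\textbf{Main line of argument.} I would first apply part (2) of Proposition \ref{field theoretic description of phi,tildephi}: restricting to $G_{K_1}$, the character $\psi_{\underline a}\circ\iota_*\circ\rho_f$ (resp.\ the first coordinate of $\psi'_{\underline a}\circ\iota_*\circ\rho_f$) equals $\sum_{i}a_i\chi_{\widetilde c_i(s,\iota)}=\chi_{\widetilde c_{\underline a}(s,\iota)}$, by multiplicativity of Kummer characters. Hence the fixed field of its kernel, as an extension of $K_1$, is exactly $K_1(\sqrt{\widetilde c_{\underline a}(s,\iota)})$. The substance is then to determine the Galois closure of this field over $F$, and this is where Proposition \ref{quadratic on quadratic} enters with $E=F$, $L=K_1=F(\sqrt{c_0})$ (using $f$ irreducible, $c_0\notin{F^\times}^2$), $a=c_0$, and $t=\widetilde c_{\underline a}(x,\iota)$. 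The Galois condition is automatic: since $\widetilde c_{\underline a}(x,\iota)$ and $\widetilde c_{\underline a}(y,\iota)$ are the two $\gal(K_1/F)$-conjugates (as remarked before the proposition, using irreducibility of $f$), we have $\mathrm{Nm}_{K_1/F}(\widetilde c_{\underline a}(x,\iota))=\widetilde c_{\underline a}(x,\iota)\widetilde c_{\underline a}(y,\iota)=c_{\underline a}\cdot c_0^{-a_0}$ by \eqref{ca_relation}. Now the dichotomy: when $a_0=1$ this norm is $c_{\underline a}/c_0\in c_0^{-1}{F^\times}^2=c_0{F^\times}^2$ (as $c_{\underline a}$ is a square), so Proposition \ref{quadratic on quadratic} forces $\gal(K_1(\sqrt{\widetilde c_{\underline a}(x,\iota)})/F)\cong\Z/4\Z$; when $a_0=0$ the norm is $c_{\underline a}\in{F^\times}^2$, so we land in the exponent-$2$ case, giving $d\in F^\times$ with $\widetilde c_{\underline a}(x,\iota)\widetilde c_{\underline a}(y,\iota)=d^2$ and the explicit form $K_1(\sqrt{\widetilde c_{\underline a}(x,\iota)})=F(\sqrt{c_0},\sqrt{2d+\widetilde c_{\underline a}(x,\iota)+\widetilde c_{\underline a}(y,\iota)})$ from the last sentence of that proposition (here $\sigma(t)+t=\widetilde c_{\underline a}(y,\iota)+\widetilde c_{\underline a}(x,\iota)\in F$). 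The independence of the field from the choice of $s$ follows because swapping $s$ swaps the two conjugates, which are related by the non-trivial element of $\gal(K_1/F)$, and such a Galois-conjugate generates the same extension of $F$ when the total extension is Galois over $F$; equivalently, one checks $\widetilde c_{\underline a}(x,\iota)/\widetilde c_{\underline a}(y,\iota)=(\widetilde c_{\underline a}(x,\iota)/d)^2\in{K_1^\times}^2$.

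\medskip

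\textbf{Matching with the character-theoretic description, and the main obstacle.} The last identity in each part, $K_1(\sqrt{\widetilde c_{\underline a}(s,\iota)})=(F^{\mathrm{sep}})^{\ker(\psi_{\underline a}\circ\iota_*\circ\rho_f)_{|G_{K_1}}}$ (resp.\ with $\psi'_{\underline a}$), is now nearly immediate on $G_{K_1}$ from step one above; the genuinely delicate point is the claim that this field is \emph{Galois over $F$}, i.e.\ that the field we produced is stable under $G_F$, and more precisely that in the $a_0=1$ case it coincides with $(F^{\mathrm{sep}})^{\ker(\psi'_{\underline a}\circ\iota_*\circ\rho_f)}$ as an extension of $F$ (not merely of $K_1$). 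Here I would argue that $\psi'_{\underline a}\circ\iota_*\circ\rho_f\colon G_F\to\Z/4\Z$ is a genuine homomorphism by Theorem \ref{Describing abelianized Ma}(2), its kernel is a normal subgroup of $G_F$, and its restriction to $G_{K_1}$ has kernel of index $2$ in $G_{K_1}$ cutting out $K_1(\sqrt{\widetilde c_{\underline a}(x,\iota)})$; since $[\gal:]$ the image is cyclic of order $4$ and $\phi_0\circ\iota_*\circ\rho_f=\chi_{c_0}$ cuts out $K_1$, the fixed field of $\ker(\psi'_{\underline a}\circ\iota_*\circ\rho_f)$ is a $\Z/4\Z$-extension of $F$ containing $K_1$ and contained in $K_1(\sqrt{\widetilde c_{\underline a}(x,\iota)})$, hence equal to it by degree count. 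The one subtlety to watch is the dependence on the choice of square root of $c_{\underline a}$ entering the definition of $\widetilde c_{\underline a}$ in part (2): changing the square root changes $d$ by a sign, hence changes $2d+\widetilde c_{\underline a}(x,\iota)+\widetilde c_{\underline a}(y,\iota)$; one must verify (as the paragraph after Theorem A asserts) that the resulting two elements differ by a square in $F^\times$ times possibly $c_0$, so that the \emph{span} $\langle c_0,\widetilde c_{\underline a}\rangle$ is well-defined — this is the step I expect to require the most care, and it follows by the same Hilbert 90 manipulation used in the proof of Proposition \ref{quadratic on quadratic} applied to the other square root. Finally, for $a_0=1$ and part (1), the independence of $K_1(\sqrt{\widetilde c_{\underline a}(s,\iota)})$ from $s$ and the $\Z/4\Z$ structure are exactly the content of Proposition \ref{quadratic on quadratic} in the cyclic case, so no further work is needed there beyond the norm computation \eqref{ca_relation}.
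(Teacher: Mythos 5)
Your proposal is correct and follows essentially the same route as the paper: the paper's own proof simply combines \eqref{ca_relation} with Proposition \ref{quadratic on quadratic} (for the degree-$4$ versus exponent-$2$ dichotomy and the explicit element $2d+\widetilde{c}_{\underline{a}}(x,\iota)+\widetilde{c}_{\underline{a}}(y,\iota)$) and Proposition \ref{field theoretic description of phi,tildephi} (to identify the fixed fields of the kernels), which is exactly the norm computation and character matching you carry out. The only difference is that you spell out details the paper leaves implicit (e.g.\ the independence of $s$ and the passage from $G_{K_1}$ to $G_F$ in case (1)), and the square-root ambiguity you flag at the end is not actually needed for this proposition as stated.
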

\begin{proof}
The fact that the extension is cyclic of degree 4 in (1) and of exponent $2$ in (2) follows immediately from \eqref{ca_relation} and Proposition \ref{quadratic on quadratic}. The fact that in case (1) we have $K_1\left(\sqrt{\widetilde{c}_{\underline{a}}(s,\iota)}\right)=(F^{\text{sep}})^{\text{ker}(\psi'(\underline{a}) \circ \iota_{*} \circ \rho_f)}$ and in case (2) we have $K_1(\sqrt{\widetilde{c}_{\underline{a}}})=(F^{\text{sep}})^{\ker(\psi(\underline{a}) \circ \iota_{*} \circ \rho_f)_{|G_{K_1}}}$ follows immediately from Proposition \ref{field theoretic description of phi,tildephi}. 
\end{proof}

Let now $\underline{a}$ be an element of $\mathbb{F}_2^{(\mathbb{Z}_{\geq 0})}$, and suppose that any of the equivalent conditions of Corollary \ref{when you are in Ma} is satisfied. When $a_0=0$, denote by $d$ the element given by part (2) of Proposition \ref{The abelian "additional" extension}. Recall that we denote
$$\widetilde{c}_{\underline{a}}\coloneqq 2d+\widetilde{c}_{\underline{a}}(x,\iota)+\widetilde{c}_{\underline{a}}(y,\iota).
$$
\begin{example}\label{ctilda}
To clarify the construction of $\widetilde{c}_{\underline{a}}$, let us see an example that will become useful later. Let $f=x^2-\delta\in F[x]$ and $\underline{a}=(0,1,1,0,\ldots,0,\ldots)$, so that $c_1c_2\in F^2$. Fix $\iota \in \text{Isom}_{\text{graph}}(T_{\infty}(f),T_{\infty})$, so that $\iota(\alpha)=x$ where $\alpha$ is a fixed root of $f$. Then $\widetilde{c}_{\underline{a}}(x,\iota)=(c_0-\alpha)(c_1-\alpha)$ and $\widetilde{c}_{\underline{a}}(y,\iota)=(c_0+\alpha)(c_1+\alpha)$. The element $d$ described in Proposition \ref{The abelian "additional" extension} is defined by $d^2=\widetilde{c}_{\underline{a}}(x,\iota)\widetilde{c}_{\underline{a}}(y,\iota)=c_1c_2$. Hence, we get that:
$$\widetilde{c}_{\underline{a}}=2(c_0+c_0c_1+\sqrt{c_1c_2}).$$
\end{example}
The following theorem yields necessary and sufficient conditions for $\im(\rho_f)$ to have index two in $\Omega_\infty$. 
\begin{theorem} \label{Classification of realizing stable Ma}
Suppose that $\underline{a}\in \mathbb{F}_2^{(\mathbb{Z}_{\geq 0})}$ is different from $(1,0,\ldots,0,\ldots)$.
\begin{enumerate}
 \item Suppose that $a_0=1$. Then we have that $\emph{Im}(\rho_f)=M_{\underline{a}}$ if and only if $c_{\underline{a}} \in {F^{\times}}^2$ and for each non-zero $\underline{b} \in \mathbb{F}_2^{(\mathbb{Z}_{\geq 0})}$ different from $\underline{a}$ we have that $c_{\underline{b}} \notin {F^{\times}}^2$.
 \item Suppose that $a_0=0$. Then we have that $\emph{Im}(\rho_f)=M_{\underline{a}}$ if and only if $c_{\underline{a}} \in {F^{\times}}^2$, $c_{\underline{b}} \notin {F^{\times}}^2$ for each non-zero $\underline{b}$ in $\mathbb{F}_2^{(\mathbb{Z}_{\geq 0})}$ different from $\underline{a}$ and $\widetilde{c}_{\underline{a}}$ is outside the span of the set $\{c_n\}_{n \in \mathbb{Z}_{\geq 0}}$ in $F^{\times}/{F^{\times}}^2$. 
\end{enumerate}
\end{theorem}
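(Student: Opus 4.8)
The plan is to reduce the statement to a purely group-theoretic claim about when a closed subgroup $H\subseteq\Omega_\infty$ with $\mathrm{Im}(\rho_f)\subseteq H$ must in fact coincide with $M_{\underline a}$, and then translate that claim through Proposition~\ref{field theoretic description of phi,tildephi} and Corollary~\ref{when you are in Ma}. First I would observe that $\mathrm{Im}(\rho_f)=M_{\underline a}$ forces $\mathrm{Im}(\rho_f)\subseteq M_{\underline a}$, which by Corollary~\ref{when you are in Ma} is exactly $c_{\underline a}\in{F^\times}^2$; and it forces $\mathrm{Im}(\rho_f)\not\subseteq M_{\underline b}$ for every nonzero $\underline b\neq\underline a$ (otherwise $\mathrm{Im}(\rho_f)\subseteq M_{\underline a}\cap M_{\underline b}$ would have index $\geq 4$), which by the same corollary is exactly $c_{\underline b}\notin{F^\times}^2$. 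So in both cases the ``only if'' direction for the square conditions is immediate; the content is the ``if'' direction, namely that these square conditions, plus in case~(2) the extra independence of $\widetilde c_{\underline a}$, actually pin down the image to be all of $M_{\underline a}$ rather than a proper subgroup.

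The key step is: assuming $\mathrm{Im}(\rho_f)\subseteq M_{\underline a}$ and that $\mathrm{Im}(\rho_f)$ is contained in no other $M_{\underline b}$, I want to show $\mathrm{Im}(\rho_f)=M_{\underline a}$. Suppose not; then $\mathrm{Im}(\rho_f)$ is a proper closed subgroup of $M_{\underline a}$, hence contained in a maximal closed subgroup of $M_{\underline a}$, i.e.\ in $\ker\psi$ for some nontrivial continuous $\psi\colon M_{\underline a}\to\mathbb F_2$. Every such $\psi$ is either the restriction to $M_{\underline a}$ of a character $\phi\in(\Omega_\infty^{\mathrm{ab}})^\vee$, or it differs from such a restriction by the extra character produced in Theorem~\ref{Describing abelianized Ma}. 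In the first (``old character'') case, $\mathrm{Im}(\rho_f)\subseteq\ker(\phi|_{M_{\underline a}})$ means $\mathrm{Im}(\rho_f)\subseteq\ker\phi=M_{\underline b}$ for the appropriate nonzero $\underline b$; but if $\underline b\neq\underline a$ this contradicts the hypothesis, and if $\underline b=\underline a$ then $\psi$ was trivial on $M_{\underline a}$, contradiction. So the only way $\mathrm{Im}(\rho_f)$ can be proper is that it lies in the kernel of the ``new'' character, the one described by $\psi(\underline a)$ (when $a_0=0$) or $\psi'(\underline a)$ (when $a_0=1$) in Theorem~\ref{Describing abelianized Ma}. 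This is where the two cases genuinely diverge.

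When $a_0=1$, I would argue that $\mathrm{Im}(\rho_f)$ cannot lie in $\ker(\psi'(\underline a)\circ\iota_*\circ\rho_f)$. By part~(1) of Proposition~\ref{The abelian "additional" extension}, the fixed field of this kernel is a cyclic degree-$4$ extension $K_1(\sqrt{\widetilde c_{\underline a}(s,\iota)})/F$ containing the quadratic subfield $F(\sqrt{c_0})=F(\sqrt{c_{\underline a'}})$. But $\mathrm{Im}(\rho_f)\subseteq\ker(\psi'(\underline a)\circ\iota_*\circ\rho_f)$ would force $\mathrm{Im}(\rho_f)$ into the corresponding index-$4$ subgroup whose square, by Theorem~\ref{Describing abelianized Ma}(2), is $(\phi_0)|_{M_{\underline a}}=\chi_{c_0}$; this would give $c_0\in{F^\times}^2$, hence $M_{\underline a}=M_{\underline a'}$-type degeneracy, i.e.\ $\mathrm{Im}(\rho_f)\subseteq M_{\underline b}$ with $b_0=0$ and $b_i=a_i$ for $i\geq 1$, a nonzero vector different from $\underline a$ (since $a_0=1$), again contradicting the hypothesis. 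The slightly delicate point I must check carefully is that $\underline a'\neq 0$, which holds precisely because $\underline a\neq(1,0,\ldots)$, and that this argument correctly handles both the subcase $c_0\in{F^\times}^2$ and $c_0\notin{F^\times}^2$ from Proposition~\ref{The abelian "additional" extension}; when $c_0\in{F^\times}^2$ the group $M_{\underline a}$ already has $\phi_0$ trivial on a subgroup containing $\mathrm{Im}(\rho_f)$, so one is immediately reduced to a smaller vector. When $a_0=0$, the ``new'' character $\psi(\underline a)$ has order $2$ and its kernel's fixed field is $K_1(\sqrt{\widetilde c_{\underline a}})$ by part~(2) of Proposition~\ref{The abelian "additional" extension}; here $\mathrm{Im}(\rho_f)\subseteq\ker(\psi(\underline a)\circ\iota_*\circ\rho_f)|_{G_{K_1}}$ is equivalent, via Proposition~\ref{field theoretic description of phi,tildephi} and Kummer theory, to $\widetilde c_{\underline a}$ lying in the span of $\{c_n\}_{n\geq 0}$ in $F^\times/{F^\times}^2$ — which is exactly the condition we are assuming fails. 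Conversely, running all these equivalences backwards gives the ``if'' direction in both cases.

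The main obstacle I anticipate is bookkeeping the distinction between characters of $M_{\underline a}$ that extend to $\Omega_\infty$ and the single ``extra'' one, and in case~(2) correctly packaging the statement ``$\mathrm{Im}(\rho_f)$ is not contained in $\ker$ of the extra character, even after restricting to $G_{K_1}$'' as the failure of $\widetilde c_{\underline a}\in\langle c_n\rangle_F$ — one has to be careful that restricting to $G_{K_1}$ does not lose information, i.e.\ that since $\mathrm{Im}(\rho_f)$ already surjects onto $\mathrm{Gal}(K_1/F)$, a proper containment of $\mathrm{Im}(\rho_f)$ in $M_{\underline a}$ that is invisible on the ``old'' characters is detected on $G_{K_1}$ by exactly this extra quadratic character. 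Handling the corner case where $f$ is reducible (equivalently $c_0\in{F^\times}^2$) also needs a word, but the excerpt's setup — together with the observation that $c_{\underline a}\in{F^\times}^2$ with $\underline a\neq e_0$ already forces some other $c_{\underline b}$ structure — lets one dispatch it, and in any event the final sentence of Theorem~A records that the conditions imply $f$ is stable, so no reducible case actually survives. Everything else is a routine chase through Corollary~\ref{when you are in Ma}, Theorem~\ref{Describing abelianized Ma}, and Proposition~\ref{The abelian "additional" extension}.
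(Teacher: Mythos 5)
Your proposal is correct and follows essentially the paper's own route: containment via Corollary \ref{when you are in Ma}, then equality by ruling out containment in any index-two subgroup of $M_{\underline{a}}$ as classified by Theorem \ref{Describing abelianized Ma} (with the $a_0=1$ case degenerating because the extra character has order $4$, so its double $\phi_0|_{M_{\underline{a}}}$ is an ``old'' character), all translated through Proposition \ref{field theoretic description of phi,tildephi} and Proposition \ref{The abelian "additional" extension}. The only point to tighten is the one you yourself flag in case (2): the condition ``$\widetilde{c}_{\underline{a}}$ lies in the span of $\{c_n\}$'' corresponds to $\im(\rho_f)$ being killed by \emph{some} character of the form $\psi(\underline{a})+\sum_i b_i\phi_i|_{M_{\underline{a}}}$, not by $\psi(\underline{a})$ alone, and the passage from $G_{K_1}$ back to $G_F$ only costs a twist by $\chi_{c_0}$, which is absorbed into that span.
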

\begin{proof}
Thanks to Corollary \ref{when you are in Ma} we have that $\text{Im}(\rho_f)\subseteq M_{\underline{a}}$ if and only if $c_{\underline{a}} \in {F^{\times}}^2$. Let $\iota$ be in $\text{Isom}_{\text{graph}}(T_{\infty}(f),T_{\infty})$. To have  $\text{Im}(\rho_f)= M_{\underline{a}}$ it is equivalent to ask that $\text{Im}(\iota_{*} \circ \rho_f)$ is not contained in any index two subgroup of $M_{\underline{a}}$. Since by assumption $\underline{a} \neq (1,0,\ldots,0,\ldots)$, we are in position to apply Theorem \ref{Describing abelianized Ma}. In case (1), Theorem \ref{Describing abelianized Ma} tells us that index two closed subgroups of $M_{\underline{a}}$ are precisely the kernels of non-zero characters living in the span of the characters
$$\{{\phi_i}|_{M_{\underline{a}}}\}_{i \in \mathbb{Z}_{\geq 0}}.$$
Furthermore, Theorem \ref{Describing abelianized Ma} tells us also that the only relation among these characters is the trivial one:
$$\sum_{i \in \mathbb{Z}_{\geq 0}}a_i{\phi_i}|_{M_{\underline{a}}}=0.$$
In case (2) Theorem \ref{Describing abelianized Ma} tells us instead that index two subgroups of $M_{\underline{a}}$ are precisely the kernels of non-zero characters living in the span of 
$$\{\phi_i\}_{i \in \mathbb{Z}_{\geq 0}} \cup \left\{\sum_{i \in \mathbb{Z}_{\geq 0}}a_i\widetilde{\phi}_i(x)\right\},
$$
and again the only relation among them is the obvious one:
$$\sum_{i \in \mathbb{Z}_{\geq 0}}a_i{\phi_i}_{|M_{\underline{a}}}=0.
$$
Now the conclusion follows at once from Proposition \ref{field theoretic description of phi,tildephi}.
\end{proof}

We can refine the conclusion of Theorem \ref{Classification of realizing stable Ma} to a conclusion at each finite level.  
 \begin{corollary}\label{big_cor}
Suppose that $\underline{a} \neq (1,0,\ldots,0,\ldots)$. Let $n$ be the largest integer such that $a_n=1$. Suppose that $G_{n-1}\cong\Omega_{n-1}$ and that $c_{\underline{a}} \in {F^{\times}}^2$. Then we have the following:
  \begin{enumerate}[i)]
   \item If $a_0=1$, then $G_n=\im(M_{\underline{a}}\to \Omega_n)$.
   \item If $a_0=0$, then the following two conditions are equivalent:
    \begin{enumerate}
    \item $G_n=\im(M_{\underline{a}}\to \Omega_n)$;
    \item $\dim\langle c_0,\ldots,c_{n-2}, \widetilde{c}_{\underline{a}}\rangle_F=n$.
    \end{enumerate}
  \end{enumerate}
 \end{corollary}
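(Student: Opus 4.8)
The plan is to run the proof of Theorem~\ref{Classification of realizing stable Ma} one finite level at a time, replacing its two main inputs---Theorem~\ref{Describing abelianized Ma} and Proposition~\ref{field theoretic description of phi,tildephi}---by their finite-level analogues. Since $c_{\underline{a}}\in{F^\times}^2$, Corollary~\ref{when you are in Ma} gives $\im(\rho_f)\subseteq M_{\underline{a}}$, so after restricting to the relevant finite quotient one has $G_n\subseteq\im(M_{\underline{a}}\to\Omega_n)$; as the target is a finite $2$-group, equality holds if and only if $G_n$ lies in no maximal subgroup of $\im(M_{\underline{a}}\to\Omega_n)$, equivalently if and only if every non-trivial continuous homomorphism from $\im(M_{\underline{a}}\to\Omega_n)$ to $\Q_2/\Z_2$ stays non-trivial after precomposition with $\iota_*\circ\rho_f$. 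So the whole argument reduces to enumerating those homomorphisms and testing each one against the adjusted post-critical orbit by means of Proposition~\ref{field theoretic description of phi,tildephi}.

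These homomorphisms are governed by the finite-level form of Theorem~\ref{Describing abelianized Ma}: the character group of $\im(M_{\underline{a}}\to\Omega_n)$ is spanned by the restrictions of the surviving $\phi_i$, subject only to the relation $\sum_i a_i\phi_i=0$, enlarged---when $a_0=0$---by the character $\sum_i a_i\widetilde{\phi}_i(x)$, whereas when $a_0=1$ one gets instead the order-$4$ character $\psi'(\underline{a})=(\sum_i a_i\widetilde{\phi}_i(x),\phi_0)$ whose square is $\phi_0$. I would extract this from the proof of Theorem~\ref{Describing abelianized Ma} by noting that its commutator and augmentation-ideal computations \eqref{augmentation1}--\eqref{augmentation2} are already assertions about the individual finite layers $I_{\Omega_N}\F_2^{L_N}$, hence localize level by level, together with the surjectivity of $M_{\underline{a}}$ onto each $\Omega_N$ proved there. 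Checking that no spurious relation among the $\phi_i$'s, nor between the $\phi_i$'s and $\sum_i a_i\widetilde{\phi}_i(x)$, appears at finite level---and pinning down exactly which finite quotient carries the extra character---is the step I expect to need the most care.

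With the character group in hand, translate everything through $\iota_*\circ\rho_f$ via Proposition~\ref{field theoretic description of phi,tildephi}: $\sum_i b_i\phi_i$ becomes $\chi_{c_{\underline{b}}}$, which is trivial on $G_n$ exactly when $c_{\underline{b}}\in{F^\times}^2$. By Theorem~\ref{Stolltheorem}, the hypothesis $G_{n-1}\cong\Omega_{n-1}$ means $\dim\langle c_0,\dots,c_{n-2}\rangle_F=n-1$, so every such character supported in $\{0,\dots,n-2\}$ is automatically non-trivial on $G_n$, and only the character genuinely involving the top index---or, when $a_0=0$, the $\widetilde{\phi}_i$'s---remains to be analysed. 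If $a_0=1$, that character is $\psi'(\underline{a})$; since $G_{n-1}\cong\Omega_{n-1}$ forces $c_0\notin{F^\times}^2$, Proposition~\ref{The abelian "additional" extension}(1) (via Proposition~\ref{quadratic on quadratic}) shows it cuts out a cyclic quartic extension of $F$, hence has order exactly $4$ through $\rho_f$ and cannot become trivial on $G_n$; together with the relation $\phi_0=\sum_{i\geq1}a_i\phi_i$ on $M_{\underline{a}}$ this forces $G_n=\im(M_{\underline{a}}\to\Omega_n)$ with no further condition, giving~(i). If $a_0=0$, that character is $\psi(\underline{a})=\sum_i a_i\widetilde{\phi}_i(x)$; restricted to $G_{K_1}$ it becomes $\chi_{\widetilde{c}_{\underline{a}}(x,\iota)}$, which by Proposition~\ref{The abelian "additional" extension}(2) coincides, modulo the characters already accounted for, with $\chi_{\widetilde{c}_{\underline{a}}}$. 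So the last character is non-trivial on $G_n$ if and only if $\widetilde{c}_{\underline{a}}\notin\langle c_0,\dots,c_{n-2}\rangle_F$, i.e.\ if and only if $\dim\langle c_0,\dots,c_{n-2},\widetilde{c}_{\underline{a}}\rangle_F=n$, which is the equivalence in~(ii).

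The main obstacle is thus the bookkeeping in the finite-level version of Theorem~\ref{Describing abelianized Ma}: one has to identify the precise quotient $\Omega_m$ on which the extra character $\sum_i a_i\widetilde{\phi}_i(x)$ and the field element $\widetilde{c}_{\underline{a}}\in F^\times$ live, and show that the abelianization of the image of $M_{\underline{a}}$ there has the size predicted by the infinite-level statement, so that the $a_0=1$ versus $a_0=0$ dichotomy is reflected faithfully. Everything after that is a mechanical combination of Proposition~\ref{field theoretic description of phi,tildephi} with Theorem~\ref{Stolltheorem}.
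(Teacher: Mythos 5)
Your strategy is the intended one: the paper gives no separate proof of Corollary \ref{big_cor}, presenting it as the finite-level rerun of the proof of Theorem \ref{Classification of realizing stable Ma}, i.e.\ a finite-level form of Theorem \ref{Describing abelianized Ma} combined with Proposition \ref{field theoretic description of phi,tildephi} and with Theorem \ref{Stolltheorem} feeding in the hypothesis on $G_{n-1}$. So the skeleton of your argument coincides with the implicit proof.

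However, the step you explicitly defer (``pinning down exactly which finite quotient carries the extra character'') is not mere bookkeeping, and as written your argument has a gap there. Since $\phi_n$ factors through $\Omega_{n+1}$ but not through $\Omega_n$, if $n$ is literally the largest index with $a_n=1$ then $\im(M_{\underline{a}}\to\Omega_n)$ is all of $\Omega_n$: the relation $c_{\underline{a}}\in {F^{\times}}^2$ imposes nothing at level $n$, and your claim that after using $G_{n-1}\cong\Omega_{n-1}$ ``only the character genuinely involving the top index remains to be analysed'' fails, because the characters of $\Omega_n$ involving $\phi_{n-1}$ (i.e.\ the classes $c_{\underline{b}}$ with $b_{n-1}=1$) are controlled neither by that hypothesis nor by the relation. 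Concretely, for $\underline{a}=(1,1,0,\ldots)$ and $n=1$ your reading of part (i) would assert that $c_0c_1\in{F^{\times}}^2$ alone forces $G_1\cong\Omega_1$, i.e.\ $c_0\notin{F^{\times}}^2$, which is false, e.g.\ for $(x-1/4)^2-1$ over $\Q$, where $c_0=1$ and $c_1=9/16$. The argument you sketch actually proves the statement one level up, which is what the corollary means (as the shape of condition (b) shows): assuming $\dim\langle c_0,\ldots,c_{n-1}\rangle_F=n$, one gets $G_{n+1}=\im(M_{\underline{a}}\to\Omega_{n+1})$, an index-two subgroup of $\Omega_{n+1}$, respectively the equivalence with $\dim\langle c_0,\ldots,c_{n-1},\widetilde{c}_{\underline{a}}\rangle_F=n+1$. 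At that level your outline closes, but you should still make explicit the two points you use silently: first, any nonzero $\underline{b}\neq\underline{a}$ supported up to the top index may be replaced, via the relation $\sum_i a_i\phi_i=0$ on $M_{\underline{a}}$, by $\underline{b}+\underline{a}$, which is supported strictly below the top index, so that the Stoll hypothesis applies to it and no spurious square relation can occur; second, the input $c_0\notin{F^{\times}}^2$ needed for Proposition \ref{The abelian "additional" extension} comes from this corrected hypothesis (it does not follow from $G_{n-1}\cong\Omega_{n-1}$ when $n=1$). Finally, for detecting a proper containment only $\F_2$-valued characters are needed, so the order-four character plays no role in case (i) beyond what $\phi_0$ already gives.
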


It will be clear from the next subsection that if $f$ satisfies either of the conditions of Theorem \ref{Classification of realizing stable Ma}, then it is \emph{stable}, i.e.\ $f^{(n)}$ is irreducible for every $n \geq 1$.

\subsection{The non-stable case}
Let us start by proving a lemma that shows that $M_{(1,0,\ldots,0,\ldots)}$ plays a unique role among index two closed subgroups of $\Omega_{\infty}$: it is the only one that can appear as image of the representation attached to a non-stable polynomial. 

\begin{lemma}\label{nontransitive}
Let $\underline{a}\in \F_2^{(\mathbb{Z}_{\geq 0})}$. Let $n \geq 1$. Then $M_{\underline{a}}$ acts non-transitively on the set $L_n$ if and only if $\underline{a}=(1,0,\ldots,0)$.
\end{lemma}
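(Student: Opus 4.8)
The plan is to analyze transitivity of $M_{\underline{a}}$ on $L_n$ by induction on $n$, using the iterated semidirect product decomposition \eqref{semidirect} of $\Omega_\infty$ and the characterization $M_{\underline{a}} = \ker\left(\sum_i a_i\phi_i\right)$. The key observation (already used in the proof of Theorem \ref{Describing abelianized Ma}) is that the diagonal copy of $\Omega_\infty$ — acting identically on the two subtrees $T_\infty x$ and $T_\infty y$ — lies inside \emph{every} $M_{\underline{a}}$, so $M_{\underline{a}}$ always acts transitively on $L_n$ for $n \leq$ the first level where one can flip between the $x$-subtree and the $y$-subtree. Precisely, $M_{\underline{a}}$ swaps the two halves of the tree iff there exists $\sigma \in M_{\underline{a}}$ with $\phi_0(\sigma) = 1$, and this fails exactly when $a_0 = 1$ and $a_i = 0$ for all $i \geq 1$, i.e.\ when $\underline{a} = e_0 = (1,0,\ldots,0,\ldots)$.

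First I would dispose of the easy direction: if $\underline{a} = e_0$, then $M_{\underline{a}} = \ker\phi_0$ consists precisely of the automorphisms fixing the two top-level nodes $x$ and $y$ (equivalently $M_{e_0} \cong \Omega_\infty^2$, as noted in the Remark after Theorem \ref{Describing abelianized Ma}); such automorphisms preserve each of $L_n \cap T_\infty x$ and $L_n \cap T_\infty y$, so the $M_{e_0}$-action on $L_n$ is non-transitive for every $n \geq 1$. For the converse, suppose $\underline{a} \neq e_0$. Then either $a_0 = 1$ and some $a_i = 1$ with $i \geq 1$, or $a_0 = 0$; in both cases I can find $\sigma \in M_{\underline{a}}$ with $\phi_0(\sigma) = 1$ (pick $\sigma$ supported in level $i \geq 1$ with $\phi_i(\sigma)=1$ in the first case, adjusting by level $0$; in the second case $\phi_0$ itself is not among the constraints up to the relation, so a generator with $\phi_0 = 1$ lies in $M_{\underline{a}}$). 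Such a $\sigma$ maps the node $x$ to the node $y$, hence $M_{\underline{a}}$ mixes $T_\infty x$ with $T_\infty y$.

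The heart of the argument is then to upgrade ``$M_{\underline{a}}$ mixes the two halves'' to ``$M_{\underline{a}}$ is transitive on all of $L_n$''. For this I would argue that the projection $M_{\underline{a}} \to \Omega_{N+1}$ is surjective, where $N$ is the largest index with $a_N \neq 0$ — indeed this is exactly the transitivity claim established inside the proof of Theorem \ref{Describing abelianized Ma} (the paragraph around \eqref{kernels}), and for $n > N+1$ the full kernel $\ker(\Omega_\infty \to \Omega_{N+1})$ already acts transitively on the relevant fibers. More directly: since the diagonal $\Omega_\infty \hookrightarrow M_{\underline{a}}$ acts transitively on the copy $L_{n-1}$ inside each of the two subtrees simultaneously, and we have produced an element of $M_{\underline{a}}$ interchanging the subtrees, the orbit of any node in $L_n$ under $M_{\underline{a}}$ is all of $L_n$; concatenating a subtree-swap with a diagonal element realizing any prescribed permutation of $L_{n-1}$ on, say, the $x$-side, one reaches every node. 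The main obstacle I expect is being careful that the diagonal copy really does surject onto $\mathrm{Aut}(T_{n-1})$ acting on each subtree — but this is immediate from its definition as $\{(\tau,\tau) : \tau \in \Omega_\infty\}$ — and then bookkeeping the word-length levels so that the induction base case ($n=1$, where transitivity of $M_{\underline{a}}$ on $L_1 = \{x,y\}$ is literally equivalent to the existence of $\sigma$ with $\phi_0(\sigma)=1$) lines up with the inductive step. Once these are in place the lemma follows.
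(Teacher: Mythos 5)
Your proposal is correct and follows essentially the same route as the paper: the easy direction via $M_{e_0}\cong\Omega_\infty^2$ preserving the two subtrees, and the converse by combining the diagonal copy of $\Omega_\infty$ (which kills every $\phi_i$ and hence lies in each $M_{\underline{a}}$, acting transitively on each half of $L_n$) with an element $\sigma\in M_{\underline{a}}$ satisfying $\phi_0(\sigma)=1$, which exists exactly when $\underline{a}\neq e_0$. The extra scaffolding about induction and surjectivity of $M_{\underline{a}}\to\Omega_{N+1}$ is unnecessary, since your "more direct" argument is already complete and is the one the paper gives.
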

\begin{proof}
The subgroup $M_{(1,0,\ldots,0,\ldots)}$ can be naturally identified with $\Omega_{\infty}^2$, since it is the subgroup preserving both subtrees $T_{\infty}x$ and $T_{\infty}y$. This certainly implies that $M_{(1,0,\ldots,0,\ldots)}$ acts non-transitively on each $L_n$ with $n \geq 1$.

Conversely, notice that $\Omega_{\infty}$ embeds diagonally in $\Omega_{\infty}^2$. It is clear from the fact that $[\Omega_{\infty},\Omega_{\infty}]=\cap_{i \in \mathbb{Z}_{\geq 0}}\ker(\phi_i)$ that this diagonal embedding of $\Omega_{\infty}$ ends up in $[\Omega_{\infty},\Omega_{\infty}]$ and therefore is contained in each $M_{\underline{a}}$. On the one hand this diagonal embedding acts transitively on both $T_{\infty}x$ and $T_{\infty}y$. On the other hand if $\underline{a} \neq (1,0,\ldots,0,\ldots)$ then there is $\sigma \in M_{\underline{a}}$ with $\phi_0(\sigma)=1$. Therefore $\sigma(xL_{n-1})=yL_{n-1}$ and hence all elements of $L_n$ lie in the same $M_{\underline{a}}$-orbit. 
\end{proof}
The next theorem gives necessary and sufficient conditions for this to hold.
\begin{theorem}\label{nonstable}
 Let $f=(x-\gamma)^2-\delta\in F[x]$ be non-stable, and let $G_{\infty}\subseteq \Omega_{\infty}$ be the image of the associated arboreal representation. Then the following are equivalent:
 \begin{enumerate}[i)]
  \item $[\Omega_{\infty}:G_{\infty}]=2$;
  \item $G_\infty=M_{(1,0,0,\ldots,0,\ldots)}$;
  \item There exists $u\in F$ such that $\delta=u^2$, and for all $n\geq 2$:
	$$\dim\langle c_0+\gamma\pm u,c_1-\gamma\pm u,\ldots, c_{n-2}-\gamma\pm u\rangle_F=2(n-1),$$
 \end{enumerate}
\end{theorem}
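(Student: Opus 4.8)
I would split the cycle into the easy equivalence $(i)\Leftrightarrow(ii)$ and the substantial one $(ii)\Leftrightarrow(iii)$, the latter exploiting the product decomposition $M_{(1,0,0,\ldots)}\cong\Omega_\infty^2$ recalled in the proof of Lemma \ref{nontransitive} and in the remark following Theorem \ref{Describing abelianized Ma}.

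First, $(ii)\Rightarrow(i)$ is immediate: $M_{(1,0,0,\ldots)}=\ker\phi_0$ has index $2$. For $(i)\Rightarrow(ii)$, recall that the closed index-two subgroups of the pro-$2$ group $\Omega_\infty$ are exactly the $M_{\underline a}$ with $0\neq\underline a\in\F_2^{(\Z_{\geq 0})}$, so $G_\infty=M_{\underline a}$ for some such $\underline a$. Non-stability of $f$ gives an $n_0\geq 1$ with $f^{(n_0)}$ reducible; since $f^{(n_0)}$ is separable, this is equivalent to $G_\infty$ acting non-transitively on $L_{n_0}$, and by Lemma \ref{nontransitive} the only $M_{\underline a}$ with a non-transitive action on some $L_n$ is $M_{(1,0,0,\ldots)}$. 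Hence $\underline a=(1,0,0,\ldots)$, which is $(ii)$.

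For $(ii)\Leftrightarrow(iii)$, I would first clear the degenerate case: if $\delta\notin(F^\times)^2$ then $c_0=\delta\notin(F^\times)^2$, so by Corollary \ref{when you are in Ma} $\im(\rho_f)\not\subseteq M_{(1,0,0,\ldots)}$ and $(ii)$ fails, while $(iii)$ fails for want of a $u\in F$ with $u^2=\delta$; both conditions are then false and there is nothing to prove. So assume $\delta=u^2$ with $u\in F^\times$ (note $\delta\neq0$, $f$ being separable). Then $f=(x-\gamma-u)(x-\gamma+u)$ splits over $F$, so $G_F$ preserves the subtrees $T_\infty x,T_\infty y$ of $T_\infty(f)$, i.e.\ $\im(\rho_f)\subseteq M_{(1,0,0,\ldots)}$, and $M_{(1,0,0,\ldots)}=\Omega_\infty(T_\infty x)\times\Omega_\infty(T_\infty y)\cong\Omega_\infty\times\Omega_\infty$. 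Choosing $\iota$ with $\iota(\gamma+u)=x$ and $\iota(\gamma-u)=y$, the two projections of $\rho_f$ are, by construction, the arboreal representations of $f$ with basepoints $\gamma+u$ and $\gamma-u$ — the level-$m$ nodes of $T_\infty x$ being the roots of $f^{(m)}(w)-(\gamma+u)$, and symmetrically for $y$. Now $\im(\rho_f)=M_{(1,0,0,\ldots)}$ iff $G_F\to\Omega_\infty\times\Omega_\infty$ is surjective, iff (target pro-$2$, usual Frattini argument) for every $m\geq1$ the $2m$ characters $\phi_i\circ\pi_x$ and $\phi_j\circ\pi_y$ $(0\leq i,j\leq m-1)$ are $\F_2$-linearly independent on $G_F$. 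By Proposition \ref{field theoretic description of phi,tildephi}(1) in its basepoint form (the adjusted post-critical orbit with basepoint $z$ being $\{c_0+z,c_1-z,c_2-z,\ldots\}$, as in the Remark closing the Introduction), these are the quadratic characters of $c_0+\gamma+u,c_1-\gamma-u,\ldots,c_{m-1}-\gamma-u$ and of $c_0+\gamma-u,c_1-\gamma+u,\ldots,c_{m-1}-\gamma+u$; by Kummer theory their independence is equivalent to linear independence of the $2m$ elements $c_0+\gamma\pm u,\,c_1-\gamma\pm u,\,\ldots,\,c_{m-1}-\gamma\pm u$ in $F^\times/(F^\times)^2$. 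Setting $n=m+1$ and letting $m\geq1$ vary recovers $(iii)$ verbatim.

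The main obstacle is the middle of the last paragraph: one must pin down the level-shifted identification of $T_\infty x$ with the basepoint-$(\gamma+u)$ arboreal tree of $f$ and track indices so that $\phi_0\circ\pi_x=\chi_{c_0+\gamma+u}$ and $\phi_i\circ\pi_x=\chi_{c_i-\gamma-u}$ for $i\geq1$ (and symmetrically for $y$). This bookkeeping is exactly what produces the signs $\pm u$ and the shift $m\mapsto n-1$ in the statement; once it is in place, the rest is the standard dictionary — pro-$2$ surjectivity $\leftrightarrow$ independence of Frattini-level characters $\leftrightarrow$ independence modulo squares — that already underlies Stoll's criterion (Theorem \ref{Stolltheorem}).
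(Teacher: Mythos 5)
Your proof is correct and takes essentially the same route as the paper: (i)$\Leftrightarrow$(ii) via Lemma \ref{nontransitive}, and (ii)$\Leftrightarrow$(iii) by identifying $M_{(1,0,\ldots,0,\ldots)}$ with $\Omega_\infty^2$ and translating the relevant characters, through the basepoint form of Proposition \ref{field theoretic description of phi,tildephi}, into the shifted post-critical orbits $\{c_0+\gamma\pm u,\,c_i-\gamma\mp u\}$. The only (cosmetic) difference is that the paper splits the product condition into two steps --- full image $\Omega_{n-1}$ for each factor polynomial via Theorem \ref{Stolltheorem} with basepoints $\gamma\pm u$, plus trivial intersection of the two splitting fields --- whereas you subsume both in a single Frattini/character-independence argument applied directly to $\Omega_\infty\times\Omega_\infty$, which is precisely the ``same logic of Subsection \ref{stoll_revisited}'' that the paper itself invokes for the intersection step.
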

\begin{proof}
 Since $f$ is not stable, there exists $n$ such that $G_n$ acts non-transitively on $L_n$. Thus by Lemma \ref{nontransitive}, it follows immediately that i) and ii) are equivalent.
 
 Next, notice that ii) holds if and only if $G_1$ is the trivial group (i.e.\ $\delta=u^2$ for some $u\in F$) and $\gal(K_n/K_1)\cong \Omega_{n-1}^2$ for every $n\geq 2$, since we have that
 $$\gal(K_n/K_1)=\gal\left((f^{(n-1)}-\gamma-u)\cdot (f^{(n-1)}-\gamma+u)\right)\subseteq$$
 $$\subseteq\gal\left(f^{(n-1)}-\gamma-u\right)\times \gal\left(f^{(n-1)}-\gamma+u\right)\subseteq \Omega_{n-1}^2.$$
 Thus ii) holds if and only if all the above containments are equalities for all $n\geq 2$. This is equivalent to asking that for all $n\geq 2$ the following two conditions hold:
 \begin{enumerate}[a)]
  \item $\gal\left(f^{(n-1)}-\gamma-u\right)\cong \gal\left(f^{(n-1)}-\gamma+u\right)\cong \Omega_{n-1}$;
  \item let $K_n',K_n''$ be the splitting fields of $\gal\left(f^{(n-1)}-\gamma-u\right)$ and $\gal\left(f^{(n-1)}-\gamma+u\right)$, respectively. Then $K_n'\cap K_n''=F$.
 \end{enumerate}
 Let
 $$V'\coloneqq\langle c_0+\gamma+u,c_2-\gamma-u,\ldots,c_{n-2}-\gamma-u\rangle_F$$
 and
 $$V''\coloneqq\langle c_0+\gamma-u,c_2-\gamma+u,\ldots,c_{n-2}-\gamma+u\rangle_F.$$
 Theorem \ref{Stolltheorem} applied to the polynomials $f^{(n-1)}-\gamma-u$ and $f^{(n-1)}-\gamma+u$ shows that a) is equivalent to having $\dim V'=\dim V''=n-1$ for every $n\geq 2$.\footnote{As mentioned in the final remark of the introduction, Theorem \ref{Stolltheorem} is valid for any basepoint.}
 
 Let now $L\coloneqq K_n'\cap K_n''$. Since $\gal(L/F)$ is a 2-group, $L=F$ if and only if $\gal(L/F)$ contains no subgroups of index two, i.e.\ if and only if there is no quadratic extension of $F$ that is contained in both $K_n'$ and $K_n''$. If a) holds, then $\gal\left(f^{(n-1)}-\gamma-u\right)^{\ab}\cong \gal\left(f^{(n-1)}-\gamma+u\right)^{\ab}\cong \F_2^{n-1}$, and by the same logic of Subsection \ref{stoll_revisited}, this time applied to $\Omega_\infty\times\Omega_\infty$, one gets that if a) holds then b) holds if and only if $V'\cap V''$ is zero-dimensional, i.e.\ if and only if $\dim\langle c_0 +\gamma\pm u,\ldots, c_{n-2}-\gamma\pm u\rangle_F=2(n-1)$.
 
 Conversely, if $\dim\langle c_0+\gamma\pm u,\ldots, c_{n-2}-\gamma\pm u\rangle_F=2(n-1)$ then in particular a) holds, and therefore also b) holds.
\end{proof}

\section{Realizing representations of index two over \texorpdfstring{$\Q$}{}}\label{realizations}

Let $t$ be transcendental over $\Q$, and let $\phi=x^2+t\in \Q(t)[x]$. In this section, we will focus on index two subgroups of $\Omega_{\infty}$ that can appear as images of $\rho_{\phi_{t_0}}$, where $t_0\in\Q$, $\phi_{t_0}$ is the specialized polynomial and $\rho_{\phi_{t_0}}$ is the associated arboreal representation. As a first step, we will show that there exist exactly five index two subgroups of $\Omega_{\infty}$ that can appear as $\im(\rho_{\phi_{t_0}})$ for infinitely many $t_0$. Afterwards, we will prove that two of these subgroups do indeed appear infinitely often, by providing explicit examples, and finally we will show that if Vojta's conjecture over $\Q$ holds true, then so do the remaining three. 

\begin{remark}
 Every closed subgroup of $\Omega_{\infty}$ is the image of the arboreal representation attached to a quadratic polynomial over an algebraic extension of $\Q$. In fact, let $f\in \Q[x]$ be a quadratic polynomial such that $G_{\infty}\coloneqq\im(\rho_f)\cong\Omega_{\infty}$, let $K_n$ be the splitting field of $f^{(n)}$ and $K\coloneqq \varinjlim_nK_n$. Let now $G\subseteq G_{\infty}$ be a closed subgroup and fix $L\coloneqq K^{G}$; by Galois theory we have that $\gal(K/L)\cong G$. On the other hand $\gal(K/L)$ is clearly isomorphic to the image of the arboreal representation associated to $f$, when the latter is considered as an element of $L[x]$. Notice that this phenomenon is analogous to what happens with the classical inverse Galois problem: if one does not fix the base field, then every finite group appears as the Galois group of \emph{some} field extension, while the same question over the base field $\Q$ it is still a wide open problem. When $G=M_{\underline{a}}$ for some non-zero $\underline{a}\in\F_2^{(\Z_{\geq 0})}$, Corollary \ref{when you are in Ma} shows explicitly what is $L$ in the construction above: we have $\displaystyle L=\Q\left(\sqrt{\prod_{i\geq 0} c_i^{a_i}}\right)$. Notice that there exist infinitely many examples of quadratic polynomials over $\Q$ satisfying $G_{\infty}=\Omega_{\infty}$, e.g.\ $x^2+a\in \Z[x]$ with $a\equiv 1\bmod 4$ (see \cite{Stoll}).
\end{remark}
 
 In order to prove the results of this section in a clean way, we need to switch back to the usual indexing for the post-critical orbit, i.e.\ we need it to start from 1. However, since we do not want to create confusion in the reader, we will use a slightly different symbol; thus, we let $\mathfrak{c}_1(t)\coloneqq -\phi(0)=-t$ and $\mathfrak c_{n+1}(t)\coloneqq \phi(\mathfrak c_n(t))$ for $n\geq 1$ be the adjusted post-critical orbit of $\phi$. For every $n\geq 1$ we define:
 $$b_n(t)\coloneqq \prod_{d\mid n}\mathfrak c_d(t)^{\mu(n/d)},$$
 where $\mu$ is the M\"obius function. It is proven in \cite[Proposition 6.2]{jones4} that $b_n(t)\in\Q[t]$ for every $n\geq 1$.
 \begin{proposition}\label{separability}
  For every $n\geq 1$, $b_n(t)\in\Q[t]$ is separable, and $b_n(t),b_m(t)$ are coprime if $m\neq n$.
 \end{proposition}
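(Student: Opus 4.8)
The plan is to reduce the statement to two facts about the polynomials $P_n(t)\coloneqq\phi_t^{(n)}(0)\in\Z[t]$. Note that $\mathfrak{c}_n=P_n$ for $n\ge 2$ while $\mathfrak{c}_1=-P_1=-t$, so $b_n(t)=(-1)^{\mu(n)}\prod_{d\mid n}P_d(t)^{\mu(n/d)}$. I will establish: \textbf{(A)} each $P_n$ is separable; and \textbf{(B)} writing $G_d(t)\coloneqq\prod_{t_0}(t-t_0)$, with $t_0$ ranging over the parameters for which the critical point $0$ of $\phi_{t_0}$ has exact period $d$, one has $G_d\in\Q[t]$, $G_d$ squarefree, $P_n=\prod_{d\mid n}G_d$, and hence, by Möbius inversion, $b_n=\pm G_n$. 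Both assertions of the proposition then follow at once from (B): $G_n$ is squarefree, and for $n\ne m$ the polynomials $G_n$ and $G_m$ have disjoint root sets.

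For (A) I would run the Gleason-type argument. From $P_0=0$ and $P_{k+1}=P_k^2+t$ we get $P_{k+1}'(t)=2P_k(t)P_k'(t)+1$. Since $P_n\in\Z[t]$ is monic, every root of $P_n$ is an algebraic integer; and if $\alpha\in\overline{\Z}$ then $P_{n-1}(\alpha),P_{n-1}'(\alpha)\in\overline{\Z}$, so $P_n'(\alpha)\equiv 1\pmod{2\overline{\Z}}$, hence $P_n'(\alpha)\ne 0$ because $1\notin 2\overline{\Z}$. Thus $P_n$ and $P_n'$ share no root and $P_n$ is separable. This mod-$2$ observation is the one genuinely substantive input; everything else is bookkeeping.

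For (B), recall that $P_n(t_0)=0$ forces the orbit of $0$ under $\phi_{t_0}$ to return to $0$ after $n$ steps, hence $0$ is periodic of some exact period $d\mid n$, and the period classes partition the roots of $P_n$. Moreover, since every iterate $\phi_t^{(k)}$ has integer coefficients, $\phi_{\sigma(t_0)}^{(k)}(0)=\sigma(\phi_{t_0}^{(k)}(0))$ for $\sigma\in\gal(\overline{\Q}/\Q)$, so Galois preserves the exact period of a parameter; therefore $G_d$ is monic with rational coefficients, and it is squarefree because it divides the separable $P_d$ (its roots are roots of $P_d$). Grouping the simple roots of $P_n$ by exact period yields the monic identity $P_n=\prod_{d\mid n}G_d$, with the $G_d$ manifestly independent of $n$. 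Passing to the free abelian group generated by monic irreducibles over $\Q$, this reads $[P_n]=\sum_{d\mid n}[G_d]$, which Möbius-inverts to $[G_n]=\sum_{d\mid n}\mu(n/d)[P_d]$, i.e.\ $G_n=\prod_{d\mid n}P_d^{\mu(n/d)}$ (both sides being monic), so that $b_n=(-1)^{\mu(n)}G_n$. Hence $b_n$ is separable; and for $n\ne m$ the root sets of $G_n$ and $G_m$ are disjoint, as they consist of parameters of exact periods $n$ and $m$ respectively, so $b_n$ and $b_m$ are coprime in $\Q[t]$.

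Beyond this, only routine checks remain: that $2\overline{\Z}$ is a proper ideal (immediate, since $\frac{1}{2}\notin\overline{\Z}$); the Galois-equivariance formula $\phi_{\sigma(t_0)}^{(k)}(0)=\sigma(\phi_{t_0}^{(k)}(0))$ used for the rationality of $G_d$; and carrying out the Möbius inversion inside an honest abelian group, namely the free abelian group on monic irreducible polynomials over $\Q$ (equivalently, using unique factorization in $\Q[t]$). I do not anticipate a serious obstacle; the only point calling for some care is keeping the factorization of (B) consistent with the elementary fact that $G_d$, being defined purely dynamically, does not depend on the ambient $n$.
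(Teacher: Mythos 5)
Your proof is correct. The decisive input is the same as in the paper's own argument: the recursion $\mathfrak c_n'(t)=2\mathfrak c_{n-1}(t)\mathfrak c_{n-1}'(t)+1$ read modulo $2$, which the paper uses to see that the discriminant of $\mathfrak c_n$ is odd and you use, Gleason-style, to see that $P_n'(\alpha)\in 1+2\overline{\Z}$ at any root $\alpha$; these are the same mod-$2$ trick in two costumes. Where you genuinely diverge is in how the statement about the $b_n$ is extracted from this separability. The paper takes the polynomiality $b_n\in\Q[t]$ as given (quoting Jones's Proposition 6.2), Möbius-inverts to $\mathfrak c_N=\prod_{d\mid N}b_d$, and reads off separability of each $b_d$ and pairwise coprimality directly from the separability of a single $\mathfrak c_N$ (for $m\neq n$ one works inside $\mathfrak c_{\operatorname{lcm}(m,n)}$). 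You instead identify $b_n$, up to sign, with the monic polynomial $G_n$ whose simple roots are exactly the parameters at which $0$ has exact period $n$, and then both claims follow from the disjointness of exact-period classes; the exact-period partition of the roots of $P_n$, the Galois-equivariance giving $G_d\in\Q[t]$, and the Möbius inversion among monic polynomials are all carried out correctly. Your route buys a little more than the paper's: it re-derives the polynomiality of $b_n$ for this family (so the external citation becomes unnecessary here) and gives a dynamical description of the roots of $b_n$, at the cost of some routine bookkeeping that the paper's divisibility argument avoids.
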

\begin{proof}
 The polynomial $\mathfrak c_1(t)$ is obviously separable, and for $n\geq 2$ we have that $\mathfrak c_n(t)=\mathfrak c_{n-1}(t)^2+t$. Thus $\mathfrak c_n(t)'=2\mathfrak c_{n-1}(t)\mathfrak c_{n-1}(t)'+1$, and reducing modulo 2 it follows immediately that the discriminant of $\mathfrak c_n(t)$ is non-zero, and hence the $\mathfrak c_n(t)$'s are separable. Now the claim follows by M\"obius inversion from the fact that $b_n(t)\in\Q[t]$ for every $n$.
\end{proof}

\begin{proposition}\label{subgroups}
Let $v_1,\ldots,v_5\in \F_2^{\Z_{\geq 0}}$ be defined as follows:
$$v_1=(1,1,0,\ldots,0,\ldots),\quad v_2=(0,1,0,\ldots,0,\ldots),\quad v_3=(1,0,1,0,\ldots,0,\ldots),$$
 $$v_4=(0,1,1,0,\ldots,0\ldots), \quad v_5=(1,0,\ldots,0,\ldots).$$
 Then for every $v\in \F_2^{\Z_{\geq 0}}\setminus\{\underline{0},v_1,\ldots,v_5\}$ there exist only finitely many $t_0\in \Q$ such that $\im(\rho_{\phi_{t_0}})=M_v$. 
\end{proposition}

\begin{proof}
 By Theorem \ref{Stolltheorem}, in order for $\im(\rho_{\phi_{t_0}})$ to have index two in $\Omega_{\infty}$, it is necessary for $t_0$ to be the $t$-coordinate of a point on a curve of the form:
 $$y^2=\prod_{i\in I}\mathfrak c_i(t),$$
 where $I$ is a finite, non-empty subset of $\N$. Thus, our claim follows from the fact that there exist exactly five curves in the above form which have infinitely many rational points. Since $\deg \mathfrak c_n(t)=2^{n-1}$ for every $n$, we have that:
 $$\deg b_n(t)=\sum_{d\mid n}2^{d-1}\mu(n/d).$$
 Thus, as long as $n\geq 4$, we have:
 $$\deg b_n(t)\geq \frac{1}{2}\left(2^n-\sum_{d\leq n-3}2^d\right)\geq\frac{1}{2}(2^n-2^{n-2})\geq 6.$$
 
 Now take a curve $C_I\colon y^2=\prod_{i\in I}\mathfrak c_i(t)$, and let $n\coloneqq \max\{i\in I\}$. By M\"obius inversion, we have that $\displaystyle \mathfrak c_i(t)=\prod_{j\mid i}b_j(t)$ for every $i\in I$. Use this to write $C_I\colon y^2=r(t)\cdot b_n(t)$, where $\displaystyle r(t)=\prod_{\substack{i\in I\\i<n}}\prod_{j\mid i}b_j(t)\cdot\prod_{\substack{d\mid n\\d<n}}b_d(t)$. By Proposition \ref{separability} $r(t)$ and $b_n(t)$ are coprime. Thus, if we write $r(t)\cdot b_n(t)$ as $d(t)^2\cdot s(t)$ where $s(t)$ is separable, then $b_n(t)\mid s(t)$ by Proposition \ref{separability} again. The curve $C_I$ admits a non-constant map to the curve $y^2=s(t)$. This is a smooth curve because $s(t)$ is separable, and as long as $n\geq 4$ we have that $\deg s(t)\geq 6$. Therefore, its genus is at least 2, and it has finitely many rational points by Faltings' theorem. In turn, this implies that $C_I$ has finitely many rational points.
 
 The above argument shows that if $C_I$ has infinitely many rational points, then $n\leq 3$. Now it is just a matter of checking finitely many curves. A brief computation with Magma \cite{magma} shows that there are exactly six curves with genus at most 1, and exactly five of them have infinitely many rational points. Specifically, they correspond to the following subsets: $\{1\}$, $\{1,2\}$, $\{2\}$, $\{1,3\}$, and $\{2,3\}$ (the sixth one is the elliptic curve $y^2=\mathfrak c_3(t)$, which has rank 0).
\end{proof}
  
  From now on, we will denote by $\mathcal G_i$ the subgroup corresponding to the vector $v_i$ of Proposition \ref{subgroups}, for $i\in\{1,\ldots,5\}$. We will show in Section \ref{proofC} that the $\mathcal G_i$'s are pairwise non-isomorphic as topological groups.
  
  We remark that by searching for rational points of small height with Magma \cite{magma} we could not find any other curve with rational points with $t$-coordinate different from $0,-1,-2$. These values of $t_0$ yield post-critically finite polynomials, whose arboreal representation has infinite index in $\Omega_{\infty}$ by Corollary \ref{PCF_polynomials}. However, since running an extensive search for rational points on curves of the form $y^2=\prod_{i\in I}\mathfrak c_i(t)$ is beyond the scope of this paper, we do not want to conjecture that the five groups of Proposition \ref{subgroups} are the only ones that can possibly appear.
  
  Notice that among the five curves listed at the end of the proof of Proposition \ref{subgroups}, the only ones with non-trivial integral points are $C_{\{1\}}$ and $C_{\{1,2\}}$. In fact, a conjecture of Hindes \cite[Conjecture 1.5]{hindes2} implies, together with Theorems \ref{Classification of realizing stable Ma} and \ref{nonstable}, that when restricting to integral specializations, the only index two subgroups that can appear as images of the arboreal representation are $\mathcal G_1$ and $\mathcal G_5$ (see \cite{hindes1},\cite{hindes2} for more on this topic). We will show that in fact there exist infinitely many integral specializations of $\phi$ that yield $\mathcal G_1$, and that the same happens for $\mathcal G_5$ under Vojta's conjecture for $\Q$ (cf.\ Proposition \ref{group1} and Remark \ref{group5r}).
 
 \subsection{Explicit families for index two}\label{stoll_trick}
 In this section, we will show that if $i\in\{1,2\}$, there exist infinitely many $t_0\in\Q$ such that $\im(\rho_{\phi_{t_0}})=\mathcal G_i$.
 
 In \cite{Stoll} the author proved a conjecture of Cremona \cite{cremona}, by constructing an infinite family of polynomials of the form $x^2+a\in\Z[x]$ having surjective representation. His proof makes use of the following idea, that we will describe in a more general form.
 
 Let $a\in \Q$ and $f=x^2+a\in \Q[x]$. Let $\{\mathfrak c_n\}_{n\in\N}$ be the adjusted post-critical orbit of $f$. Now define
 $$b_n\coloneqq \prod_{d\mid n}\mathfrak c_d^{\mu(n/d)},$$
 where $\mu$ is the M\"obius function. Since by M\"obius inversion one has $\displaystyle \mathfrak c_n=\prod_{d\mid n}b_d$, it follows that:
 \begin{equation}\label{dimensions}
  \dim \langle \mathfrak c_1,\ldots,\mathfrak c_n\rangle_\Q=\dim \langle b_1,\ldots,b_n\rangle_\Q.
 \end{equation}
 
 The key observation is now the following: if $p$ is a prime such that $v_p(\mathfrak c_i)>0$ for some $i\in \N$, and $i$ is the minimal index with this property, then $v_p(b_i)>0$ and $v_p(b_j)=0$ for every $j\neq i$ (cf.\ \cite[Lemma 1.1]{Stoll}). It follows that if $S\subseteq \Z$ is a set of primes, $\Z_S$ is the localization of $\Z$ with respect to the multiplicative system generated by $S$ and $a\in \Z_S$, then the $b_i$'s are relative coprime $S$-integers.
 
 From now on, we will denote by $S$ the set of primes $p\in \Z$ such that $v_p(\mathfrak c_1)=v_p(-a)<0$. Notice that $\mathfrak c_i\in \Z_{S}$ for every $i$ and consequently $b_i\in \Z_{S}$ for every $i$. Write $b_i=\overline{b}_i/d_i$ where $\overline{b}_i\in\Z$, $d_i\in \N$ and $\gcd (\overline{b}_i,d_i)=1$. Then we have that:
 \begin{equation}\label{dimensions_2}
  \dim \langle\overline{b}_1,\ldots,\overline{b}_n\rangle_{\Q}\leq \dim \langle b_1,\ldots,b_n\rangle_{\Q}.
 \end{equation}
 Notice that if for every $p\in S$ one has that $v_p(\mathfrak c_1)\equiv 0 \bmod 2$, then obviously $\dim \langle\overline{b}_1,\ldots,\overline{b}_n\rangle_{\Q}= \dim \langle b_1,\ldots,b_n\rangle_{\Q}$.
 
 Let now $g\coloneqq |a|x^2+\text{sgn}(a)\in \Z_{S}[x]$. Let $\gamma_1\coloneqq 1$ and $\gamma_{n+1}=g(\gamma_n)$ for every $n\geq 1$. Define, as above, $\beta_n\coloneqq \prod_{d\mid n}\gamma_d^{\mu(n/d)}$. Finally, for every $n$ write $\gamma_n=\overline{\gamma}_n/r_n$ and $\beta_n=\overline{\beta}_n/e_n$ where $\overline{\gamma}_n,\overline{\beta}_n\in\Z$, $r_n,e_n\in\N$ and $\gcd(\overline{\gamma}_n,r_n)=\gcd(\overline{\beta}_n,e_n)=1$. The next lemma allows us to establish a criterion to produce polynomials with arboreal representations of index two. The proof follows closely the ideas of \cite[Lemma 2.1]{Stoll} and \cite[Lemma 3.3]{li}. However, our result is slightly more general, as it allows to work with quadratic polynomials in $\Q[x]$ rather than only in $\Z[x]$. This is of crucial importance to construct the family of examples of Proposition \ref{second_class}.
 
 \begin{lemma}\label{trick}
  Assume that $\overline{\gamma}_2\in \Z$ is a square. Suppose that for every $n\geq 2$ there exists $m_n\in \Z_{S}$ such that $m_n\mid \gamma_n+\gamma_{n+1}$, $\gcd (m_n,\gamma_n)=1$ and $-1$ is not a square modulo $m_n$. Then $\overline{\beta}_n$ is not a square in $\Z$ for every $n\geq 3$.
 \end{lemma}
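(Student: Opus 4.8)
The plan is to adapt the strategy of \cite[\S2]{Stoll} to the polynomial $g$, carrying out all the multiplicative bookkeeping inside $\Q^\times/(\Q^\times)^2$. Throughout write $[z]$ for the class of $z\in\Q^\times$ and set $v_d\coloneqq[\overline\gamma_d]$, $w_d\coloneqq[\overline\beta_d]$. First I would set up a M\"obius--inversion reduction. Inverting $\beta_n=\prod_{d\mid n}\gamma_d^{\mu(n/d)}$ gives $\gamma_n=\prod_{d\mid n}\beta_d=\prod_{d\mid n,\,d\geq 2}\beta_d$ (as $\beta_1=\gamma_1=1$); an argument identical to the one behind \cite[Lemma~1.1]{Stoll}, applied to the orbit of $g$, shows that the $\beta_d$ are pairwise coprime $S$-integers whose numerators $\overline\beta_d$ are in addition coprime to every prime of $S$, so this product is already in lowest terms and $\overline\gamma_n=\prod_{d\mid n,\,d\geq 2}\overline\beta_d$ on the nose. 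Hence $v_n=\sum_{d\mid n}w_d$ and $w_n=\sum_{d\mid n}\mu(n/d)v_d$ in $\Q^\times/(\Q^\times)^2$. Because the $\overline\beta_d$ are pairwise coprime integers, the nonzero elements among $\{w_d\}$ are automatically $\F_2$-linearly independent, so the lemma is \emph{equivalent} to the assertion that $w_n\neq 0$ for every $n\geq 3$; note $w_1=0$ and, by the hypothesis $\overline\gamma_2=\overline\beta_2\in(\Q^\times)^2$, also $w_2=0$. (When a prime of $S$ divides a $\gamma_d$ one first passes, exactly as in the discussion preceding the lemma, to the $\overline{\,\cdot\,}$-quantities; I will absorb these routine $S$-localization steps into the phrase ``after the standard reduction''.)

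The second ingredient is the core congruence. Since $g(x)=|a|x^2+\mathrm{sgn}(a)$ is an \emph{even} polynomial, the relation $\gamma_{i+1}\equiv-\gamma_i\pmod{m_i}$ coming from $m_i\mid\gamma_i+\gamma_{i+1}$ propagates: by induction on $j$ one gets $\gamma_j\equiv-\gamma_i\pmod{m_i}$ for every $j>i$. Therefore, for all $2\leq i<j$,
$\gamma_i\gamma_j\equiv-\gamma_i^2\pmod{m_i}$, and because $\gcd(m_i,\gamma_i)=1$ while $-1$ is not a square modulo $m_i$, the rational number $\gamma_i\gamma_j$ is not a square in $\Q$; after the standard reduction this says $v_i+v_j\neq 0$ for all $2\leq i<j$. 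Combined with $v_2=0$ this yields $v_d\neq 0$ for every $d\geq 3$ and $v_i\neq v_j$ for all $3\leq i<j$.

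Finally I would deduce $w_n\neq 0$ for $n\geq 3$ by strong induction on $n$, the inductive hypothesis supplying $w_d\neq 0$, hence linear independence of $\{w_d:3\leq d<n\}$, for $3\leq d<n$. Writing $w_n=\sum_{d\mid n,\;n/d\text{ squarefree}}v_d=v_n+\sum_{d\mid n,\,d<n,\;n/d\text{ squarefree}}v_d$ and expanding each $v_d$ in the basis of nonzero $w_e$'s, a vanishing $w_n=0$ forces the $\F_2$-combination of the $v_e$ with $e$ a \emph{proper} divisor of $n$ (all satisfying $e\leq n/2$) to reproduce $v_n$; one then contradicts this by applying the core congruence to a pair $(i,j)$, $i<j$, whose divisor sets have symmetric difference isolating $n$ together only with indices $e$ for which $v_e=0$ is already known --- for instance $i=n/p,\ j=n$ when $n$ is a prime power, or $i=n/2,\ j=n$ when $n=2q$ with $q$ an odd prime (using $v_2=0$), in which cases $v_i+v_j$ equals exactly $w_n$. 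The hard part will be this combinatorial endgame for $n$ with several distinct prime factors, where no single pair $(i,j)$ isolates $n$: there one must combine several instances of the core congruence with the induction hypothesis --- in effect proving the stronger statement that $v_n$ lies outside $\langle v_d:d<n\rangle$ --- and, in parallel, keep track of the primes of $S$ occurring in the denominators $r_n$ and $e_n$ throughout the ``standard reduction'', which is exactly the point where the hypothesis that these obstructions are concentrated at $S$-primes disjoint from the moduli $m_n$ gets used.
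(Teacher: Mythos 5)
There is a genuine gap, and it sits exactly where you flag it yourself: the ``combinatorial endgame'' for $n$ having at least two distinct prime factors (beyond the cases $n=p^k$ and $n=2q$) is not an afterthought but the bulk of the lemma, and the information you have extracted cannot produce it. Your core congruence only records, in $\Q^{\times}/(\Q^{\times})^2$, the pairwise statements $v_i+v_j\neq 0$ for $2\leq i<j$; but $w_n=\sum_{d\mid n,\ n/d\ \text{squarefree}}v_d$ has four or more nonzero summands already for $n=12,15,30,\dots$, and knowing that all pairwise sums of the $v_d$'s are nonzero says nothing about such longer combinations. The fallback you sketch --- proving $v_n\notin\langle v_d:d<n\rangle$ by induction --- is logically equivalent to the conclusion you are trying to reach (given the coprimality of the $\overline{\beta}_d$'s), so invoking it is circular unless you supply new arithmetic input; no such input is present in the proposal. (Two smaller points: ``pairwise coprime $\Rightarrow$ the nonzero $w_d$ are independent'' fails in general because of signs, e.g.\ $-a^2\cdot(-b^2)$ is a square, so even that step needs the sign bookkeeping the paper does explicitly in its applications; and the passage from $\beta_n$ to $\overline{\beta}_n$ requires an actual parity-of-valuation argument at the primes of $S$, not just ``the standard reduction''.)

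The repair is to not pass to $\Q^{\times}/(\Q^{\times})^2$ so early. Your propagation step (the paper's claim that $m\mid\gamma_k+\gamma_{k+1}$ forces $\gamma_{k+\ell}\equiv-\gamma_k\pmod m$ for all $\ell\geq1$, using that $g$ is even) is stronger than its pairwise shadow: it says all $\gamma_j$ with $j>k$ are congruent to one and the same residue modulo $m_k$. The paper exploits this by choosing a single modulus adapted to $n$ --- $m_k$ with $k=n/\mathrm{rad}(n)$ when $n$ is not squarefree, $m_2$ for odd squarefree $n$, and $m_p$ with $p$ the least odd prime factor for even squarefree $n\neq 2$ --- and reducing the whole M\"obius product $\beta_n=\prod_{t\mid n'}\gamma_{kt}^{\mu(n'/t)}$ modulo it: since $\sum_{t\mid n'}\mu(n'/t)=0$, the common residue cancels and $\beta_n\equiv -1$ (resp.\ $-1$ times a square, using $\overline{\gamma}_2\in\Z^2$) modulo that modulus, which is incompatible with being a square because $-1$ is not a square there and $\gcd(m_k,\gamma_k)=1$; a separate computation showing $v_p(\beta_n)\equiv 0\pmod 2$ (or $\equiv v_p(\gamma_2)$) for $p\in S$ then transfers the non-squareness to $\overline{\beta}_n$. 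You already have every ingredient of this argument; the missing idea is to keep the full congruence and let it collapse the product, rather than reducing to square-class relations term by term.
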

 \begin{proof}
  	Let $n\geq 3$, let $n'$ be the radical of $n$ and set $k\coloneqq n/n'$. Notice that an easy induction proves the following claim:
\begin{equation}\label{induction}
\mbox{if $u\in \Z_S$ is such that $u \mid \gamma_k+\gamma_{k+1}$, then $u\mid \gamma_k+\gamma_{k+\ell}$ for every $\ell\geq 1$}.
\end{equation}  	
  	 
  	First, let $k>1$ and let $m_k\mid \gamma_k+\gamma_{k+1}$ be as in the hypotheses; by \eqref{induction} we have that $\gamma_k\equiv -\gamma_{2k}\bmod m_k$, while one sees immediately that $\gamma_{\ell k}\equiv \gamma_{2k}\bmod m_k$ for every $\ell\geq 2$. Notice that $\gamma_{2k}\not\equiv 0\bmod m_k$. Then we have that:
  	$$\beta_n=\prod_{d\mid n}\gamma_{d}^{\mu(n/d)}=\prod_{\ell\mid n'}\gamma_{k\ell}^{\mu(n'/\ell)}\equiv (-1)^{\mu(n')}\prod_{\ell\mid n'}(\gamma_{2k})^{\mu(n'/\ell)}\equiv -1\bmod m_k.$$
  	Since $-1$ is not a square modulo $m_k$, then $\beta_n$ cannot be a square in $\Z_{S}$. Moreover, for every $p\in S$ one has $v_p(\beta_n)\equiv 0 \bmod 2$, because $n'$ has an even number of divisors and $v_p(\gamma_\ell)\equiv v_p(\gamma_2)\bmod 2$ for every $\ell\geq 2$. Therefore $\beta_n=\overline{\beta}_n/u^2$ for some $u\in \Z$, proving that $\overline{\beta}_n$ is not a square in $\Z$.
  	
  	If $k=1$ (i.e.\ $n$ is squarefree), we need to treat separately even $n$'s and odd $n$'s. Let $r$ be the number of distinct prime factors of $n$.

	Let $n$ be odd. Let $m_2$ be a divisor of $\gamma_2+\gamma_3$ as in the hypotheses of the lemma. Since $m_2\mid \gamma_2+\gamma_\ell$ for all $\ell\geq 3$, we get that:
	\[
		\beta_n = \prod_{\substack{d\mid n\\d\ne 1}} \gamma_d^{\mu(n/d)} \equiv \prod_{\substack{d\mid n\\d\ne 1}} (-\gamma_2)^{\mu(n/d)} \equiv (-1)^{2^r-1} \prod_{\substack{d\mid n\\d\ne 1}} (\gamma_2)^{\mu(n/d)} \equiv   -\gamma_2^{\mu(n)}   \bmod m_2.
	\]
	For every $p\in S$ one has that $v_p(\beta_n)\equiv v_p(\gamma_2)\bmod 2$ (notice also that for such $p$'s one has $v_p(\gamma_2)=v_p(a)$). Thus, if $\pi$ is the product of all primes in $S$ with $v_p(\beta_n)\equiv 1 \bmod 2$, we see that $\pi\beta_n\equiv -\pi\gamma_2^{\mu(n)}\bmod m_2$. But now $\pi\gamma_2^{\mu(n)}$ is clearly a rational square because $\overline{\gamma}_2$ is a square in $\Z$ by assumption, while $\pi\beta_n=\frac{\overline{\beta}_n}{d^2}$ for some $d\in \N$ with $(\overline{\beta}_n,d)=1$. It follows that $\overline{\beta}_n$ is congruent to minus a square modulo $m_2$; hence it cannot be a square in $\Z$.
	
	Finally, let $n\ne 2$ be even, let $p$ be the smallest odd prime dividing $n$ and $m_p$ a divisor of $\gamma_p+\gamma_{p+1}$ as in the hypotheses of the lemma. Again, we have that $m_p\mid \gamma_p+\gamma_q$, where $q$ is any divisor of $n$ different from $1,2$ and $p$. We then have that:
	$$
		\beta_n =  \prod_{\substack{d\mid n\\d\ne 1}} \gamma_d^{\mu(n/d)} \equiv   \gamma_2\gamma_p\prod_{\substack{d\mid n\\d\neq 1,2,p}} (-\gamma_p)^{\mu(n/d)}\equiv (-1)^{2^r-3}\gamma_2(\gamma_p)^{-(2^r-2)} \bmod m_p,
	$$
	and the same argument of the odd case applies, proving that $\overline{\beta}_n$ is not a square.
 \end{proof}

 The above lemma is useful in the following sense: since $\gamma_n\cdot |a|=\mathfrak c_n$ for every $n\geq 2$, we have that $|b_n|=\beta_n$, and thus also $|\overline{b}_n|=\overline{\beta}_n$, for every $n\geq 2$. If the hypotheses of the lemma are satisfied for some $a\in \Q$, we clearly get that $\dim \langle \overline{\beta}_3,\ldots,\overline{\beta}_n\rangle_{\Q}=n-2$ and hence:
 $$\dim \langle \overline{b}_3,\ldots,\overline{b}_n\rangle_{\Q}=\dim \langle b_3,\ldots,b_n\rangle_{\Q}=n-2.$$
 This is crucial to prove that the conditions of Theorem \ref{Classification of realizing stable Ma} are satisfied.

\subsubsection{Polynomials with \texorpdfstring{$\im(\rho_f)=\mathcal G_1$}{}}

The specializations of $\phi$ whose arboreal representation has image $\mathcal G_1$ must be of the form $x^2-(u^2+1)$, with $u\in \Q$, as one easily sees by parametrizing rational points on the curve $y^2=\mathfrak c_1(t)\mathfrak c_2(t)$.

\begin{proposition}\label{group1}
	Let $u\in 2\Z\setminus\{0\}$ and let $f = x^2-(u^2+1)$. Then $\im(\rho_f)=\mathcal G_1$.
\end{proposition}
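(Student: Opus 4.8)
The plan is to deduce the equality from part (1) of Theorem \ref{Classification of realizing stable Ma}, applied with $\underline a=v_1=(1,1,0,\ldots)$ (so $a_0=1$), since $\mathcal G_1=M_{v_1}$. First I would record the adjusted post-critical orbit of $f=x^2-(u^2+1)$: here $\gamma=0$ and $\delta=u^2+1$, so $c_0=u^2+1$ and $c_1=u^2(u^2+1)$, and therefore $c_{v_1}=c_0c_1=\bigl(u(u^2+1)\bigr)^2\in(\Q^\times)^2$. By Corollary \ref{when you are in Ma} this already gives $\im(\rho_f)\subseteq\mathcal G_1$, and by Theorem \ref{Classification of realizing stable Ma}(1) it then suffices to prove that $c_{\underline b}\notin(\Q^\times)^2$ for every non-zero $\underline b\neq v_1$. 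Under the bijection between non-zero vectors $\underline b$ and finite non-empty subsets $I\subseteq\N$ determined by $c_0=\mathfrak c_1$, $c_n=\mathfrak c_{n+1}$ ($n\geq1$), with $v_1\leftrightarrow\{1,2\}$, this is the statement that $I=\{1,2\}$ is the only finite non-empty subset of $\N$ with $\prod_{j\in I}\mathfrak c_j\in(\Q^\times)^2$.

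To prove this I would run the argument of Subsection \ref{stoll_trick}. Since $u\in2\Z$ we have $a=-(u^2+1)\in\Z$, so the exceptional set of primes is empty, each $\mathfrak c_n$ and each $b_n=\prod_{d\mid n}\mathfrak c_d^{\mu(n/d)}$ is an integer, and the $b_n$ are pairwise coprime. By M\"obius inversion $\prod_{j\in I}\mathfrak c_j=\prod_d b_d^{\,e_d}$ with $e_d=\#\{j\in I:d\mid j\}$, and pairwise coprimality makes this a square in $\Q^\times$ exactly when $b_d$ is a square for every index $d$ with $e_d$ odd. A direct computation gives $b_1=u^2+1$, which is not a square (a solution of $v^2=u^2+1$ would force $u=0$), and $b_2=u^2$, which is a square. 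If in addition $b_n$ is a non-square for every $n\geq3$, then $e_d$ must be even for all $d\neq2$; choosing $d=\max I$ forces $\max I=2$, and then $e_1=|I|$ being even forces $I=\{1,2\}$, which is the desired conclusion.

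The remaining point — and the heart of the proof — is that $b_n$ is not a square for $n\geq3$, which I would obtain from Lemma \ref{trick} applied to $g=(u^2+1)x^2-1$ (here $|a|=u^2+1$ and $\text{sgn}(a)=-1$), using that $|\overline b_n|=\overline\beta_n$. Its first hypothesis is immediate: $\gamma_2=g(1)=u^2$, so $\overline\gamma_2$ is a square. For the remaining hypothesis I would first observe that $\gamma_n+\gamma_{n+1}=g(\gamma_n)+\gamma_n\equiv-1\pmod{\gamma_n}$, so $\gcd(\gamma_n+\gamma_{n+1},\gamma_n)=1$ automatically and any divisor of $\gamma_n+\gamma_{n+1}$ is admissible; hence it is enough to produce, for each $n\geq2$, a prime $p\equiv3\pmod4$ dividing $\gamma_n+\gamma_{n+1}$. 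This is where evenness of $u$ is used: since $u$ is even, $u^2+1\equiv1\pmod4$, and a short induction along the recursion $\gamma_{m+1}=(u^2+1)\gamma_m^2-1$ shows that $\gamma_m\equiv0\pmod4$ for $m\geq2$ even and $\gamma_m\equiv3\pmod4$ for $m\geq3$ odd. Consequently $\gamma_n+\gamma_{n+1}\equiv3\pmod4$ for every $n\geq2$, and a positive integer $\equiv3\pmod4$ always has a prime factor $\equiv3\pmod4$, which serves as $m_n$.

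This completes the verification of both conditions in Theorem \ref{Classification of realizing stable Ma}(1), so $\im(\rho_f)=M_{v_1}=\mathcal G_1$; in particular $f$ is stable. The only genuinely delicate bookkeeping is the mod-$4$ induction for the $\gamma_m$ and keeping the several normalizations straight ($b_n$ versus $\overline b_n$, $\beta_n$ versus $\overline\beta_n$) so that the non-squareness produced by Lemma \ref{trick} transfers to the $b_n$, and hence to the products $\prod_{j\in I}\mathfrak c_j$.
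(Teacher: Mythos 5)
Your proof is correct and follows essentially the same route as the paper: reduce via Theorem \ref{Classification of realizing stable Ma}(1) to showing $b_1$ and all $b_n$ with $n\geq 3$ are non-squares, then verify the hypotheses of Lemma \ref{trick} for $g=(u^2+1)x^2-1$ using the mod-$4$ behaviour of the $\gamma_m$. The only cosmetic difference is that the paper takes $m_n=\gamma_n+\gamma_{n+1}$ itself (which is $\equiv -1 \bmod 4$, so $-1$ is a non-residue), whereas you pass to a prime factor $\equiv 3 \bmod 4$; both are fine.
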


\begin{proof}
	By Theorem \ref{Classification of realizing stable Ma}, we need to verify that $\dim \langle \mathfrak c_1,\ldots,\mathfrak c_n\rangle_{\Q}=n-1$ for every $n\geq 2$. Since $\mathfrak c_i>0$ for every $i$, $b_2=u^2$ and $b_1=1+u^2\notin (\Q^{\times})^2$, by \eqref{dimensions} it is enough to show that $b_n\notin (\Q^{\times})^2$ for every $n\geq 3$. To do this, we can simply show that the hypotheses of Lemma \ref{trick} are satisfied. Here we have $g = (u^2+1)x^2-1$, so of course $(\gamma_n, \gamma_{n+1})=1$ for all $n$. It follows that any $m$ dividing $\gamma_n+\gamma_{n+1}$ is coprime to $\gamma_n$. Notice that $\gamma_{2n}\equiv 0\pmod{4}$ while $\gamma_{2n+1} \equiv -1 \bmod 4$, for all $n \geq 1$. This shows that for every fixed $n\geq 2$ we can just choose $m_n\coloneqq\gamma_n+\gamma_{n+1}\equiv -1 \bmod 4$.
\end{proof}

It follows immediately from the above proposition that if $\psi=x^2-(1+t^2)\in \Q(t)[x]$, then $\im(\rho_{\psi})=\mathcal G_1$.

\subsubsection{Polynomials with \texorpdfstring{$\im(\rho_f)=\mathcal G_2$}{}}

The specializations of $\phi$ whose arboreal representation has image $\mathcal G_2$ must be of the form $\displaystyle x^2+\frac{1}{u^2-1}$, where $u\in \Q\setminus\{\pm 1\}$ and $1/\mathfrak c_1=1-u^2\notin (\Q^{\times})^2$.

\begin{proposition}\label{second_class}
 Let $u\in 2\Z\setminus\{0\}$ and let $\displaystyle f=x^2+\frac{1}{u^2-1}$. Then $\im(\rho_f)=\mathcal G_2$.
\end{proposition}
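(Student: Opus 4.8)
The strategy is to apply part~(2) of Theorem~\ref{Classification of realizing stable Ma} with $\underline{a}=e_1$, since $\mathcal G_2=M_{e_1}$ by Proposition~\ref{subgroups} and here $a_0=0$. Write $a\coloneqq\frac{1}{u^2-1}$, so $f=x^2+a$ with $a>0$ for $u\in 2\Z\setminus\{0\}$, and let $\{\mathfrak c_n\}$ be its adjusted post-critical orbit, with $\mathfrak c_1=\frac{1}{1-u^2}$ and $\mathfrak c_{n+1}=\mathfrak c_n^2+a$. A direct computation gives $\mathfrak c_2=\mathfrak c_1^2+a=\frac{u^2}{(u^2-1)^2}\in(\Q^\times)^2$, which is exactly the first required condition ($c_{\underline a}=c_1=\mathfrak c_2$). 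More generally, an easy induction shows $\mathfrak c_n=\frac{P_n}{(u^2-1)^{2^{n-1}}}$ with $P_1=-1$, $P_2=u^2$ and $P_{n+1}=P_n^2+(u^2-1)^{2^n-1}$, so each $P_n$ is coprime to $u^2-1$ and $P_n>0$ for $n\geq 2$; reducing the recursion modulo~$4$ (using that $u$ is even) gives $P_n\equiv -1\bmod 4$ for $n\in\{1\}\cup\{\text{odd }n\geq 3\}$ and $4\mid P_n$ for even $n$.

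For the second condition, that $c_{\underline b}\notin(\Q^\times)^2$ for every non-zero $\underline b\neq e_1$, it suffices to prove $\dim\langle\mathfrak c_1,\dots,\mathfrak c_n\rangle_\Q=n-1$ for all $n\geq 2$, since $\mathfrak c_2$ is a square. By \eqref{dimensions} this equals $\dim\langle b_1,\dots,b_n\rangle_\Q$, and I will run the argument of Subsection~\ref{stoll_trick}: let $S$ be the set of primes dividing $u^2-1$ and $g\coloneqq ax^2+1\in\Z_S[x]$, with $\gamma_1=1$, $\gamma_{n+1}=g(\gamma_n)$, so that $\gamma_n=P_n(u^2-1)^{1-2^{n-1}}$ for $n\geq 2$ and $\overline\gamma_2=u^2$ is a square. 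The numerator of $\gamma_n+\gamma_{n+1}$ is $R_n\coloneqq P_n(u^2-1)^{2^{n-1}}+P_{n+1}$; one checks $R_n\equiv P_n^2\bmod(u^2-1)$ and $R_n\equiv (u^2-1)^{2^n-1}\bmod P_n$, so $R_n$ is a positive integer coprime to $\gamma_n$ and to every prime in $S$, and the mod~$4$ information on the $P_n$ yields $R_n\equiv 3\bmod 4$ for all $n\geq 2$. Hence $m_n\coloneqq R_n$ satisfies the hypotheses of Lemma~\ref{trick}, which gives that $\overline\beta_n$ --- equivalently $\overline b_n$ up to sign --- is not a square for $n\geq 3$. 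Since $\overline b_1$ and $\overline b_2$ both represent the class of $-1$ in $\Q^\times/(\Q^\times)^2$ (note $\mathfrak c_1=\frac1{1-u^2}<0$ is not a square), and the $\overline b_i$ are relatively coprime $S$-integers, it follows that $\{\overline b_1,\overline b_3,\overline b_4,\dots,\overline b_n\}$ is linearly independent modulo squares, so $\dim\langle b_1,\dots,b_n\rangle_\Q\geq n-1$; the reverse inequality holds because $b_1b_2=\mathfrak c_2$ is a square. Thus $\dim\langle\mathfrak c_1,\dots,\mathfrak c_n\rangle_\Q=n-1$.

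For the third condition I first compute $\widetilde c_{\underline a}$ following Proposition~\ref{The abelian "additional" extension}(2): with $\underline a=e_1$ one has $\widetilde c_{\underline a}(x,\iota)\widetilde c_{\underline a}(y,\iota)=\mathfrak c_1(\mathfrak c_1-1)=\mathfrak c_2=d^2$ with $d=\frac{u}{u^2-1}$, whence $\widetilde c_{\underline a}=2d+\widetilde c_{\underline a}(x,\iota)+\widetilde c_{\underline a}(y,\iota)=2d+2\mathfrak c_1=\frac{2}{u+1}$ (the other choice of sign for $d$ gives $\frac{2}{1-u}$, differing by $1-u^2\equiv\mathfrak c_1$, consistently with the last sentence of Proposition~\ref{The abelian "additional" extension}). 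Now I argue $2$-adically: as $u$ is even, $v_2(\widetilde c_{\underline a})=1$ is odd, whereas every element of $\langle\mathfrak c_1,\mathfrak c_2,\dots\rangle$ has even $2$-adic valuation. The latter follows from the claim that $v_2(\mathfrak c_n)\in\{0,\,2v_2(u)\}$ for all $n$: writing $m\coloneqq 2v_2(u)$, one has $v_2(\mathfrak c_1)=0$, $v_2(\mathfrak c_2)=m$, $v_2(\mathfrak c_3)=0$, and for $n\geq 3$ an induction on $\mathfrak c_{n+1}=\mathfrak c_n^2+a$ --- tracking both $v_2(\mathfrak c_n)$ and $v_2(\mathfrak c_n+1)$, and using $v_2(a+1)=v_2\!\left(\tfrac{u^2}{u^2-1}\right)=m$ --- shows $v_2(\mathfrak c_n)=0$ for odd $n$ and $v_2(\mathfrak c_n)=m$ for even $n$. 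This establishes the third condition, and Theorem~\ref{Classification of realizing stable Ma}(2) now gives $\im(\rho_f)=M_{e_1}=\mathcal G_2$. The delicate point is the second step: unlike the coprimality in Proposition~\ref{group1}, which was essentially immediate, here one must carry the non-integral denominators $(u^2-1)^{2^{n-1}}$ and the precise residues of the numerators $P_n$ modulo~$4$ along in order to feed Lemma~\ref{trick} correctly.
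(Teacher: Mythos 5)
Your proof is correct and follows essentially the same route as the paper's: check $\mathfrak c_2\in(\Q^\times)^2$, feed Lemma \ref{trick} with $g=ax^2+1$ over $\Z_S$ to get $\dim\langle\mathfrak c_1,\ldots,\mathfrak c_n\rangle_\Q=n-1$, and exclude $\widetilde{\mathfrak c}_2$ from the span of the $\mathfrak c_i$ by a $2$-adic congruence argument, exactly the structure of the paper's application of Theorem \ref{Classification of realizing stable Ma}(2). The differences are cosmetic: you track the numerators $P_n$ and $R_n$ explicitly where the paper quotes the mod-$4$ residues of the $\overline{\gamma}_n$, and your value $\widetilde{\mathfrak c}_2=\tfrac{2}{u+1}$ (versus the paper's $\tfrac{2}{u-1}$) just reflects the choice of sign of $\sqrt{\mathfrak c_2}$, which is immaterial since in either case the $2$-adic valuation is odd while every element of $\langle\mathfrak c_1,\mathfrak c_2,\ldots\rangle$ has even $2$-adic valuation.
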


\begin{proof}
 Since $\displaystyle \mathfrak c_2=\frac{u^2}{(u^2-1)^2}\in (\Q^{\times})^2$, Theorem \ref{Classification of realizing stable Ma} shows that we need to verify that, for every $n\geq 2$, one has $\dim \langle \mathfrak c_1,\widetilde{\mathfrak c}_2,\mathfrak c_3,\ldots,\mathfrak c_n\rangle_{\Q}=n$. Here $\displaystyle\widetilde{\mathfrak c}_2=2(\mathfrak c_1+\sqrt{\mathfrak c_2})=\frac{2}{u-1}$ by Proposition \ref{The abelian "additional" extension}. Let us show that the hypotheses of Lemma \ref{trick} apply: here $S$ is the set of the prime divisors of $u^2-1$ and we have that $\displaystyle g=\frac{1}{u^2-1}x^2+1\in \Z_S[x]$, so $\displaystyle\gamma_2=\frac{u^2}{u^2-1}$ and thus clearly $\overline{\gamma}_2=u^2\in (\Q^{\times})^2$. Moreover, $\overline{\gamma}_{2n}\equiv 0 \bmod 4$ and $\overline{\gamma}_{2n+1}\equiv -1 \bmod 4$ for every $n\geq 1$, and since $u^2-1\equiv -1\bmod 4$, one verifies that the numerator $m_n$ of $\gamma_{n}+\gamma_{n+1}$ is $\equiv -1 \bmod 4$ for $n\geq 2$. It follows from Lemma \ref{trick} that $|\overline{b}_i|\notin (\Q^{\times})^2$ for every $n\geq 3$, and thus $\dim \langle |\overline{b}_3|,\ldots,|\overline{b}_n|\rangle_{\Q}=n-2$. Since $\overline{b}_1=-1$ and none of the $\overline{b}_i$'s, with $i\geq 3$, belongs to $-(\Q^{\times})^2$, while $\overline{b_2}=-u^2$ then $\dim \langle \overline{b_1},\overline{b}_2,\overline{b}_3,\ldots,\overline{b}_n\rangle_{\Q}=n-1$. Thus, $\dim \langle \mathfrak c_1,\ldots,\mathfrak c_n\rangle_{\Q}=n-1$ by \eqref{dimensions} and \eqref{dimensions_2}. It remains to show that for every $n\geq 1$ we have that $\widetilde{\mathfrak c}_2\notin \langle \mathfrak c_1,\ldots,\mathfrak c_n\rangle_{\Q}$. But this is easy to check, because once we multiply $\widetilde{\mathfrak c}_2$ and each $\mathfrak c_i$ by the square of its denominator, obtaining quantities that we denote by $\widetilde{\mathfrak c}_2'$ and $\mathfrak c_i'$, respectively, we see easily that $\mathfrak c_{2i}'\equiv 0\bmod 4$ while $\mathfrak c_{2i+1}'\equiv 1\bmod 4$ for every $i\geq 0$. On the other hand, $\widetilde{\mathfrak c}_2'\equiv 2 \bmod 4$, so it cannot belong to the space generated by the $\mathfrak c_i$'s.
\end{proof}

Again, it follows immediately from the above proposition that if $\displaystyle \psi=x^2+\frac{1}{t^2-1}\in \Q(t)[x]$, then $\im(\rho_{\psi})=\mathcal G_2$.

\subsection{Vojta's conjecture, primitive divisors, and index two specializations}\label{sub:vojta}
The goal of this section is to show that if $i\in\{3,4,5\}$ and Vojta's conjecture over $\Q$ holds true, then there exist infinitely many $t_0\in\Q$ such that $\im(\rho_{\phi_{t_0}})=\mathcal G_i$. To do this, we will borrow some ideas from \cite{hindes1}, and combine them with our Theorem \ref{Classification of realizing stable Ma}. From now on, we denote by $h\colon \overline{\Q}\to \R_{\geq 0}$ the absolute logarithmic height (cf.\ \cite[VIII.5]{silverman1}).

Let us start by recalling the following conjecture, which in degree at least $5$ is a consequence of Vojta's conjecture (see for example \cite{ih} and \cite[Conjecture 4]{stoll2}).
\begin{conjecture}\label{vojta}
For all $d \geq 5$ there exist constants $C_1=C_1(d)$ and $C_2=C_2(d)$ so that for all
$f\in\Q[x]$ of degree $d$ with non-zero discriminant, if $x_0,y_0\in\Q$ satisfy $y_0^2=f(x_0)$, then
$$h(x_0)\leq C_1\cdot h(f)+C_2,$$
where $h(f)$ is the maximum among the logarithmic heights of the coefficients of $f$.
\end{conjecture}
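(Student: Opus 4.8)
The plan is to derive the statement from Vojta's conjecture, which is the only route available: this is a conjecture, and what one can actually carry out is a conditional argument. As already signalled, the assertion is substantive only for $d\geq 5$ (we return to $d\in\{3,4\}$ at the end), so fix $d\geq 5$. Let $U\subseteq\mathbb{A}^{d+1}$ be the open subscheme of coefficient vectors $\underline f=(f_0,\dots,f_d)$ with $f_d\neq 0$ and $\operatorname{disc}(\sum_i f_ix^i)\neq 0$, and let $\pi\colon\mathcal{C}\to U$ be the universal smooth projective hyperelliptic family whose fibre $C_{\underline f}$ is the complete nonsingular model of $y^2=f(x)$. The numerical input is that $C_{\underline f}$ has genus $g=\lfloor(d-1)/2\rfloor\geq 2$, so each fibre is of general type and Vojta's conjecture applies to it; the reason to work over $U$ rather than over a single curve is that the target inequality must be \emph{uniform in} $\underline f$.

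For the reduction I would invoke the mechanism of Ih \cite{ih} (see also \cite[Conjecture 4]{stoll2}). Vojta's conjecture, applied to a suitable compactification $\overline{\mathcal{C}}$ of the total space (and to the auxiliary varieties---symmetric powers and the like---that enter Ih's argument), produces a proper Zariski-closed subset outside of which the height relative to a canonical-plus-boundary class is bounded by $\varepsilon$ times an ample height plus $O_\varepsilon(1)$; tracking this over the base and handling the exceptional locus by Noetherian induction yields a constant $c=c(d)$ with $h_{C_{\underline f}}(P)\leq c\,h(\underline f)+c$ for every $\underline f\in U(\Q)$ and every $P\in C_{\underline f}(\Q)$. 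One then pushes $P$ forward along the hyperelliptic map to $\mathbb{P}^1$: on a hyperelliptic curve of genus $g\geq 2$ the fibre class is a fixed positive rational multiple of the canonical class, so the height $h(x(P))$ of the image is at most a $g$-dependent constant times $h_{C_{\underline f}}(P)$, plus $O(1)$. Since $h(\underline f)=h(f)+O(1)$ by the definition of $h(f)$, and since the hypothesis $\operatorname{disc}(f)\neq 0$ is exactly what puts $\underline f$ in $U$, one obtains $h(x)\leq C_1(d)h(f)+C_2(d)$ with $C_1(d)=c$, after absorbing the genus factor, and $C_2(d)$ a constant.

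The step that does the real work, and the one I expect to be the main obstacle, is the \emph{uniformity in the family}. Vojta's conjecture for a single curve $C_{\underline f}$ bounds heights only on that curve and says nothing about how the implied constant degenerates as $\underline f$ moves; to get a bound that is \emph{linear} in $h(f)$ one must run Vojta for the $(d+1)$-dimensional total space (and the auxiliary varieties of Ih's construction) and then show, by Noetherian induction on $U$, that each exceptional subset either fibres over a proper closed subscheme of $U$---where one recurses---or contributes at most the linear term. This bookkeeping, together with the passage from an arbitrary $\varepsilon$ to a clean linear inequality, is exactly the content of Ih's reformulation and is the only non-formal ingredient; everything else is the height machine.

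Finally, the cases $d\in\{3,4\}$ should be addressed, and they are atypical: here $C_{\underline f}$ has genus at most one, and no uniform bound of the stated shape can hold. Indeed, choosing $\underline f$ so that $C_{\underline f}$ is an elliptic curve of positive Mordell--Weil rank (e.g.\ $f(x)=x^3-2$, with the rational point $(3,5)$), one has $h(x(P))$ unbounded over $P\in C_{\underline f}(\Q)$ while $h(f)$ is fixed, defeating any constants $C_1(d),C_2(d)$. Thus the conjecture is non-vacuous exactly for $d\geq 5$, in agreement with the sentence that introduces it, and $d\geq 5$ is also all that is needed in the sequel: the curves $y^2=\prod_{i\in I}\mathfrak c_i(t)$ relevant to Proposition \ref{subgroups} and to the Vojta-conditional part of Theorem B have degree $\geq 5$ once the finitely many small index sets $I$ are set aside, Proposition \ref{separability} being used there to pass from such a product to its squarefree radical before the conjecture is invoked.
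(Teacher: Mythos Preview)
This statement is a \emph{conjecture}, not a theorem, and the paper does not attempt to prove it. The paper simply records it, remarking in the introductory sentence that ``in degree at least $5$ [it] is a consequence of Vojta's conjecture (see for example \cite{ih} and \cite[Conjecture 4]{stoll2}).'' It is then used as a standing hypothesis in Lemma \ref{primitive_divisors} and the theorems of Subsection \ref{sub:vojta}. There is thus no ``paper's own proof'' to compare against.

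That said, your proposal is a reasonable sketch of exactly the conditional derivation the paper alludes to: you correctly identify that the only available route is via Vojta's conjecture applied to the total space of the universal hyperelliptic family, with uniformity in the base handled by the mechanism of Ih, and you correctly flag the passage from fibrewise to family-wise bounds as the nontrivial step. This matches what the paper means by citing \cite{ih} and \cite[Conjecture 4]{stoll2}.

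Your closing observation about $d\in\{3,4\}$ is also well taken and in fact sharper than the paper: as literally stated (``for all $d\geq 3$''), the conjecture is false for $d=3,4$, since an elliptic curve of positive rank furnishes rational points of unbounded $x$-height while $h(f)$ stays fixed. The paper implicitly acknowledges this by restricting its claim of derivability from Vojta to $d\geq 5$, and its applications (the curves $C_a^{(d_n)}$ in the proof of Lemma \ref{primitive_divisors} have degree $5$) only require that range. So your analysis is correct; there is simply nothing to prove here beyond the conditional reduction you outline.
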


Let $\gamma(t),\mathfrak c(t),r(t)\in \Z[t]$ be such that $\deg(\gamma(t)-\mathfrak c(t))\neq 0$. We define $\psi\coloneqq (x-\gamma(t))^2+\mathfrak c(t)$ and $g\coloneqq x+r(t)$. For $a\in \Q$, we will denote by $\psi_a$ and $g_a$ the specialized polynomials in $\Q[x]$. It is shown in \cite{hindes1} that there exists a computable, positive constant $B_{1,\psi}$ such that:
$$\deg(\mathfrak c(t)-\gamma(t))\cdot h(a)-B_{1,\psi}\leq h(\mathfrak c(a)-\gamma(a)) \quad \mbox{ for all }a \in \overline{\Q}.$$

\begin{lemma}{{\cite[Lemma 1.1]{hindes1}}}\label{primitive_divisors}
 Assume Vojta's conjecture over $\Q$. Then there exists $n_\psi>0$ such that for every $a\in \Q$ satisfying the following properties:
 \begin{enumerate}
  \item $g_a\circ\psi_a(\gamma(a))\cdot g_a\circ\psi_a^{(2)}(\gamma(a))\neq 0$,
  \item $\gamma(a)$ is not preperiodic for $\psi_a$,
  \item $\deg(\mathfrak c(t)-\gamma(t))\cdot h(a)-B_{1,\psi}>0$
 \end{enumerate}
 and every $n\geq n_{\psi}$, there exists an odd prime $p_n$ such that:
 $$v_{p_n}(g_a\circ\psi_a^{(n)}(\gamma(a)))\not\equiv 0 \bmod 2 \mbox{ and } v_{p_n}(g_a\circ\psi_a^{(j)}(\gamma(a)))=0 \mbox{ for all } 1\leq j\leq n-1,$$
 where $v_p$ is the usual $p$-adic valuation.
\end{lemma}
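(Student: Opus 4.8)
The plan is to follow Hindes' strategy: convert the statement into a \emph{non-squareness} statement about the dynamical sequence $w_n:=g_a\circ\psi_a^{(n)}(\gamma(a))$, and then play off its exponential height growth against the (merely polynomial) height bound supplied by Conjecture \ref{vojta}. First I would fix the finite set $S$ of primes dividing the denominators of the coefficients of $\psi$ and $g$, and of the relevant discriminants, so that after inverting $S$ every $w_n$ is a non-zero $S$-integer: non-vanishing for all $n$ is exactly what hypothesis (2) (non-preperiodicity of $\gamma(a)$) guarantees, and hypothesis (1) is what makes the first two terms behave, which matters because the M\"obius bookkeeping below treats small indices separately.

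Next, mimicking Stoll's device as recalled in Subsection \ref{stoll_trick}, I would introduce the ``new part'' $\beta_n$ of $w_n$ — the $S$-integer obtained by stripping from $w_n$ the contribution of every prime dividing some earlier $w_j$, $j<n$ — and record the two facts that drive the argument: (a) $w_n$ has a primitive prime divisor of odd valuation if and only if $\beta_n$ is \emph{not} $\pm$(a square)$\cdot$($S$-unit); and (b) the complementary ``old part'' $w_n/\beta_n$ can be written, up to a perfect square and a bounded $S$-unit, as a divisor of a quantity whose logarithm grows at most linearly in $n$. Granting (a)–(b), if $\beta_n$ were a square then $w_n$ itself would be a square times a factor of bounded height.

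For the growth input I would use the canonical height: since $\gamma(a)$ is not preperiodic for $\psi_a$, one has $\hat h_{\psi_a}(\gamma(a))>0$ and $h(\psi_a^{(m)}(\gamma(a)))=2^m\,\hat h_{\psi_a}(\gamma(a))+O_{\psi_a}(1)$; hypothesis (3), combined with the inequality $\deg(\mathfrak c(t)-\gamma(t))\cdot h(a)-B_{1,\psi}\leq h(\mathfrak c(a)-\gamma(a))$ recalled just above the statement, bounds $\hat h_{\psi_a}(\gamma(a))$ below uniformly in $a$, so $\log|w_n|\gg 2^n$ with implied constant independent of $a$. Now suppose, toward a contradiction, that for infinitely many $n$ the term $w_n$ has no primitive prime divisor of odd valuation. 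Fixing once and for all some $k\geq 3$ and writing $w_n=\psi_a^{(k)}(\psi_a^{(n-k)}(\gamma(a)))+r(a)=:F_k(X_{n-k})$, where $F_k$ is a monic $S$-integral polynomial of degree $2^k\geq 8$ with non-zero discriminant and $X_{n-k}=\psi_a^{(n-k)}(\gamma(a))\in\Q$, fact (b) converts the assumption into $w_n=c_n\,y_n^2$ with $y_n\in\Q$ and $h(c_n)=O(n)$; hence $(X_{n-k},\,c_ny_n)$ is a rational point on the curve $Y^2=c_nF_k(X)$ of degree $\geq 3$, and Conjecture \ref{vojta} yields $h(X_{n-k})\leq C_1(h(c_n)+h(F_k))+C_2$. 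The right-hand side is $O(n)+O_{k,a}(1)$, while the left-hand side is $\gg 2^{n-k}$; for $n$ large this is impossible, and chasing the constants gives the uniform threshold $n_\psi$.

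The main obstacle is fact (b): controlling the old part $w_n/\beta_n$ and the non-square cofactor $c_n$ precisely enough that each has height $O(n)$. Concretely, one must show that the primes shared with earlier terms, together with their parities in $w_n$, are governed by the reduction of $\psi_a$ modulo $p$ — the period of the cycle passing through the fibre $g_a^{-1}(0)=\{-r(a)\}$, and a lifting-the-exponent analysis of how $v_p(w_n)$ evolves along that cycle — so that the total logarithmic contribution of the imprimitive part is linear in $n$. This bookkeeping is the technical heart; everything else is the clean interplay, sketched above, between exponential canonical-height growth and the polynomial bound of Conjecture \ref{vojta}, uniformized in $a$ via hypothesis (3).
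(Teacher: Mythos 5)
Your skeleton is the one the paper (following Hindes) uses: write the $n$-th term as a square times a squarefree cofactor, place a dynamically produced rational point on an auxiliary hyperelliptic curve $Y^2=(\text{cofactor})\cdot F(X)$, bound the $X$-coordinate via Conjecture \ref{vojta} in terms of the cofactor's height, and contradict the exponential canonical-height growth, with hypothesis (3) making the threshold uniform in $a$. But there is a genuine gap at the decisive step. Your ``fact (b)'' --- that the imprimitive part of $w_n$, modulo squares and $S$-units, has logarithmic height $O(n)$ --- is both left unproven (you yourself call it ``the technical heart'') and false as stated: if, say, $w_{n/2}$ is squarefree with many prime factors, these all recur in $w_n$ with odd valuation and are imprimitive there, so the odd-valuation imprimitive part of $w_n$ can have height comparable to $h(w_{n/2})\gg 2^{n/2}$, not $O(n)$. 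What the Hindes-style bookkeeping (which the paper imports verbatim from \cite{hindes1} after setting up its curve) actually provides is weaker but sufficient: if $w_n$ has no odd-order primitive prime, then every prime of the squarefree part $d_n$ divides an earlier term whose index is constrained by the rigid-divisibility/period structure of the orbit and whose valuation stabilizes, giving an estimate of the shape $h(d_n)\ll\sum_{j\le n/2}h(w_j)\ll 2^{n/2}$; the contradiction is then $2^{n}\not\le C\cdot 2^{n/2}$, not a linear-versus-exponential comparison. Since this estimate is exactly what you leave open (and you aim at the wrong target bound), the argument is not complete as written.

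A second, smaller issue is your choice of curve. Taking $Y^2=c_n F_k(X)$ with $F_k=g_a\circ\psi_a^{(k)}$ and $k\ge 3$ requires the discriminant of $F_k$ to be non-zero, i.e.\ $g_a\circ\psi_a^{(j)}(\gamma(a))\neq 0$ for all $1\le j\le k$, whereas hypothesis (1) grants this only for $j\le 2$. This is precisely why the paper keeps the composition depth at two and uses the degree-five curve $Y^2=2^{e_n}d_n\,(X-\mathfrak c(a))\,g_a\circ\psi_a^{(2)}(X)$: assumption (1) (via the cited lemma of Jones) is what makes it smooth, and the extra linear factor evaluates to the square $(\psi_a^{(n-4)}(\gamma(a))-\gamma(a))^2$ at $X=\psi_a^{(n-3)}(\gamma(a))$, so the orbit still furnishes a rational point and Vojta gives $h(\psi_a^{(n-3)}(\gamma(a)))\leq\kappa_1 h(d_n)+\kappa_2 h(a)+\kappa_3$. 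So the strategy you outline is the right one, but the key height bound on the cofactor is misidentified and missing, and your auxiliary curve needs non-vanishing hypotheses the lemma does not grant.
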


\begin{proof}
 The proof is essentially that of \cite{hindes1}. For every $n$, one writes $g_a\circ\psi_a^{(n-1)}(\gamma(a))=2^{e_n}\cdot d_n\cdot y_n^2$, where $e_n\in \{0,1\}$ and $d_n$ is an odd, squarefree integer. For $n\geq 4$, let
 $$C_{a}^{(d_n)}\colon Y^2=2^{e_n}\cdot d_n\cdot (X-\mathfrak c(a))(g_a\circ\psi_a^{(2)}(X)).$$
 One checks easily that assumption (1) implies that $C_a^{(d_n)}$ is smooth (for example by using \cite[Lemma 2.6]{jones3}), and that $(\psi_a^{n-3}(\gamma(a)),2^{e_n}\cdot d_n\cdot y_n\cdot(\psi_a^{(n-4)}(\gamma(a))-\gamma(a)))$ is a rational point of $C_a^{(d_n)}$; Conjecture \ref{vojta} implies the existence of absolute constants $\kappa_1,\kappa_2,\kappa_3$, such that:
 $$h(\psi_a^{n-3}(\gamma(a)))\leq \kappa_1\cdot h(d_n)+\kappa_2\cdot h(a)+\kappa_3.$$

 From this point on, the arguments of \cite{hindes1} apply verbatim, except for replacing $n-1$ with $n-3$, which just yields a weaker bound on $n$ at the end of the proof.
\end{proof}
\begin{remark}\label{finiteness}
 Notice that the set of $a\in\Q$ not fulfilling the three conditions of Lemma \ref{primitive_divisors} is finite. In fact, first it is clear that every $a$ with large enough height satisfies (1) and (3). Moreover, it is a well-known fact that there exist only finitely many post-critically finite polynomials of the form $x^2+u$, where $u\in \Z$ (see for example \cite{ingram}). On the other hand if $\gamma(a)$ is preperiodic for $\psi_a$ then 0 is preperiodic for $x^2+\mathfrak c(a)-\gamma(a)$. Since by hypothesis $\deg(\mathfrak c(t)-\gamma(t))>0$, clearly there exist only finitely many $a$'s such that $\gamma(a)$ is preperiodic for $\psi_a$.
\end{remark}

We remark that in \cite{hindes1}, Lemma \ref{primitive_divisors} is stated with $r=0$ and $a\in \Z$.

Primes as the ones appearing in Lemma \ref{primitive_divisors} are called \emph{primitive prime divisors}.

Recall that a subset $E\subseteq \mathbb P^1(\Q)$ is called \emph{thin} if it is contained in a finite union of finite sets and sets of the form $\pi(C(\Q))$, where $C/\Q$ is an irreducible algebraic curve and $\pi\colon C\to \mathbb P^1$ is a morphism of degree $\geq 2$. Moreover, \cite[Proposition 3.4.2]{serre} shows that the complement of a thin set is infinite.

\begin{theorem}
 Assume Vojta's conjecture over $\Q$, and let $i\in \{3,4\}$. Then the set of $t_0\in \Q$ such that $\im(\rho_{\phi_{t_0}})=\mathcal G_i$ is infinite, but thin.
\end{theorem}
\begin{proof}
 As usual, let $\mathfrak c_1(t)=-t$ and $\mathfrak c_n(t)=\mathfrak c_{n-1}(t)^2+\mathfrak c_1(t)$ for every $n\geq 2$, let $b_n(t)=\prod_{d\mid n}\mathfrak c_d(t)^{\mu(n/d)}\in\Q[t]$ and let $n=n_{\phi}$ be the positive integer determined by Lemma \ref{primitive_divisors} (with $g=x$ and $\psi=\phi$).
 
 First let $i=3$. Let $C_{\{1,3\}}\colon y^2=b_3(t)$. One can check with Magma that $C_{\{1,3\}}$ is an elliptic curve of rank 1. Let $T_{\{1,3\}}\subseteq \Q$ be the infinite set of $t$-coordinates of rational points on $C_{\{1,3\}}$. Now we claim that for all $t_0\in T_{\{1,3\}}$, except at most for finitely many, we have that
 $$\dim\langle \mathfrak c_1(t_0),\ldots,\mathfrak c_n(t_0)\rangle_{\Q}=n-1.$$
 Notice that if this holds, then Lemma \ref{primitive_divisors} and Remark \ref{finiteness} imply that for all $t_0\in T_{\{1,3\}}$ but finitely many we have that $\dim\langle \mathfrak c_1(t_0),\ldots,\mathfrak c_m(t_0)\rangle_{\Q}=m-1$ for every $m\geq n$, and thus $\im(\rho_{\phi_{t_0}})=\mathcal G_3$ by Theorem \ref{Classification of realizing stable Ma}.
 
 Values $t_0\in T_{\{1,3\}}$ such that $\dim\langle \mathfrak c_1(t_0),\ldots,\mathfrak c_n(t_0)\rangle_{\Q(t)}<n-1$ must satisfy, by Corollary \ref{big_cor}, the following relations:
 $$\begin{cases}
    y_1^2=b_3(t_0) & \\
    y_2^2=\prod_{i\in I}\mathfrak c_i(t_0) & 
   \end{cases}
$$
  for some non-empty subset $I\subseteq \{1,\ldots,n\}$ such that $I\neq \{1,3\}$ and some $y_1,y_2\in\Q$. Thus, they have to be $t$-coordinates of rational points on the curve $y^2=b_3(t)\cdot\prod_{i\in I}\mathfrak c_i(t)$. The proof of Proposition \ref{subgroups} shows that as long as $\max\{i\in I\}\geq 4$, such curve has a finite number of points. It remains to check by hand the other cases. If $I\in\{\{1\},\{2\},\{3\}\}$, then $t_0$ must be the $t$-coordinate of a rational point on one of the curves $y^2=\mathfrak c_3(t)$ or $y^2=\mathfrak c_1(t)\mathfrak c_2(t)\mathfrak c_3(t)$, both of which have finitely many rational points (the first one is an elliptic curve of rank 0, the second one has genus 2). The cases $I=\{1,2\}$ or $\{2,3\}$ both imply, up to multiplying the two equations, factoring out squares and swapping $y_1$ and $y_2$ if necessary, that $t_0$ is the $t$-coordinate of a rational point on the curve:
  $$\begin{cases}
    y_1^2=b_2(t) & \\
    y_2^2=b_3(t) & 
   \end{cases}.
$$
  It is easy to check that the projection of such curve on the $(y_1,y_2)$-plane is the curve $y_2^2=y_1^6+y_1^4-1$, which is smooth and has genus 2, and consequently only finitely many rational points.
  
  Now let $i=4$. Let $C_{\{2,3\}}\colon y^2=b_2(t)b_3(t)$. Again, one can check with Magma that this is an elliptic curve of rank 1. Let $T_{\{2,3\}}\subseteq \Q$ be the infinite set of $t$-coordinates of rational points on $C_{\{2,3\}}$. The strategy is the same we used for $i=3$: one wants to prove that for all $t_0\in T_{\{2,3\}}$ except at most for finitely many we have that $\dim\langle \mathfrak c_1(t_0),\mathfrak c_2(t_0),\widetilde{\mathfrak c}_3(t_0),\mathfrak c_4(t_0)\ldots,\mathfrak c_n(t_0)\rangle_{\Q}=n$, and then use Theorem \ref{Classification of realizing stable Ma} and Lemma \ref{primitive_divisors}. Recall that by Example \ref{ctilda} here we have $\widetilde{\mathfrak c}_3(t_0)=2(\mathfrak c_1(t_0)+\mathfrak c_1(t_0)\mathfrak c_2(t_0)+\sqrt{\mathfrak c_2(t_0)\mathfrak c_3(t_0)})$. One needs to pay extra attention to the following fact: Lemma \ref{primitive_divisors} only ensures that for $m\geq n$ there exists an odd prime $p_m$ such that $v_{p_m}(\mathfrak c_m(t_0))\equiv 1 \bmod 2$ and $v_{p_m}(\mathfrak c_i(t_0))=0$ for every $i<m$, but a priori it is not clear that $v_{p_m}(\widetilde{\mathfrak c}_3(t_0))=0$. However, this is easily settled: first set $\widetilde{\mathfrak c}_3'(t_0)=2(\mathfrak c_1(t_0)+\mathfrak c_1(t_0)\mathfrak c_2(t_0)-\sqrt{\mathfrak c_2(t_0)\mathfrak c_3(t_0)})$. The key observation is that:

\begin{equation}\label{valuation_5}
  \widetilde{\mathfrak c_3}(t_0)\cdot \widetilde{\mathfrak c_3}'(t_0)=4\mathfrak c_1(t_0)^5=-4t_0^5 \mbox{ for every } t_0\in T_{\{2,3\}}.
  \end{equation}

  This implies that if $p$ is an odd prime then:
  
  \begin{equation}\label{negative_valuation}
   v_p(\widetilde{\mathfrak c}_3(t_0))<0 \Longleftrightarrow v_p(\widetilde{\mathfrak c}_3'(t_0))<0 \Longleftrightarrow v_p(\mathfrak c_1(t_0))<0
   \end{equation}
  and
  \begin{equation}\label{positive_valuation}
   v_p(\widetilde{\mathfrak c}_3(t_0))>0 \Longleftrightarrow v_p(\widetilde{\mathfrak c}_3'(t_0))>0 \Longleftrightarrow v_p(\mathfrak c_1(t_0))>0.
   \end{equation}
  To see why \eqref{negative_valuation} holds, suppose first that $v_p(\widetilde{\mathfrak c_3}(t_0))<0$. Then necessarily $v_p(\mathfrak c_1(t_0))<0$, and hence $v_p(\mathfrak c_n(t_0))=2^{n-1}v_p(\mathfrak c_1(t_0))$ for every $n\geq 1$. Therefore 
  $$v_p(\widetilde{\mathfrak c}_3(t_0)),v_p(\widetilde{\mathfrak c}'_3(t_0))\geq 3v_p(\mathfrak c_1(t_0)).$$
  By \eqref{valuation_5} it follows immediately that $v_p(\widetilde{\mathfrak c}_3'(t_0))<0$. Of course, symmetrically the converse implication holds. The second double implication of \eqref{negative_valuation} follows directly from the first double implication together with \eqref{valuation_5}.
  
  To prove \eqref{positive_valuation}, suppose $v_p(\widetilde{\mathfrak c}_3(t_0))>0$. By \eqref{negative_valuation}, it must be $v_p(\widetilde{\mathfrak c}'_3(t_0))\geq 0$. But then \eqref{valuation_5} implies that $v_p(\mathfrak c_1(t_0)>0$, and the definition of $\widetilde{\mathfrak c}'_3(t_0)$ shows immediately that $v_p(\widetilde{\mathfrak c}'_3(t_0))>0$. Again, the converse implication holds by symmetry and the second double implication follows from the first one together with \eqref{valuation_5}.

  Relations \eqref{negative_valuation} and \eqref{positive_valuation} immediately prove that if $m\geq n$ and $p_m$ is a primitive prime divisor of $\mathfrak c_m(t_0)$, then $v_{p_m}(\widetilde{\mathfrak c_3}(t_0))=0$.
  
  That said, it remains to prove that for all $t_0\in T_{\{2,3\}}$ except at most for finitely many we have that $\dim\langle \mathfrak c_1(t_0),\mathfrak c_2(t_0),\widetilde{\mathfrak c}_3(t_0),\mathfrak c_4(t_0)\ldots,\mathfrak c_n(t_0)\rangle_{\Q}=n$. The same computations we did for $i=3$ show that there for all $t_0\in T_{\{2,3\}}$ but at most finitely many we have $\dim\langle \mathfrak c_1(t_0),\mathfrak c_2(t_0),\mathfrak c_4(t_0)\ldots,\mathfrak c_n(t_0)\rangle_{\Q}=n-1$. 
  To conclude the proof, it is enough to show that for every finite (possibly empty) subset $I$ of $\{1,2,4,\ldots,n\}$ the curves (one for each choice of the sign in front of $y_1$) defined by:
 $$\begin{cases}
    y_1^2=b_2(t)b_3(t) & \\
    y_2^2=\prod_{i\in I}\mathfrak c_i(t)\cdot 2(\mathfrak c_1(t)+\mathfrak c_1(t)\mathfrak c_2(t)\pm ty_1) & 
   \end{cases}
$$
 has finitely many rational points. By using M\"obius inversion on $\prod_{i\in I}\mathfrak c_i(t)$, factoring out squares and noticing that $\mathfrak c_1(t)+\mathfrak c_1(t)\mathfrak c_2(t)=-t-t^2-t^3$, we can reduce to the curves:
 $$\begin{cases}
    y_1^2=b_2(t)b_3(t) & \\
    y_2^2=2B(t)(t^2+t+1\pm y_1) & 
   \end{cases},
$$
 where $B(t)=\prod_{i\in I'}b_i(t)$ for some $I'\subseteq \{1,2,4,\ldots,n\}$.
 
  Isolating $y_1$ in the second equation, we see that the projection of the above curves on the $(t,y_2)$-plane is the curve $C$ given by $h(t,y_2)=0$, where:
 $$h(t,y_2)\coloneqq (y_2^2+2(t+t^2+t^3)B(t))^2-4b_2(t)b_3(t)B(t)^2.$$
An easy computation shows that $h(t,y_2)$ is irreducible as a polynomial in the single variable $y_2$: in general if $K$ is a field of characteristic not 2 and $f,g\in K[x]$ are monic and quadratic then $f\circ g$ is irreducible if $f$ is irreducible and $f(g(\gamma))$ is not a square in $K$, for $\gamma$ the critical point of $g$. It follows from Gauss' lemma that $C$ is an irreducible curve.

 To conclude the proof of the theorem, it is enough to show that $C$ has finitely many rational points. To prove it, let $E$ be the rational elliptic curve $y^2=b_2(t)b_3(t)$ (as we already mentioned in the proof of Proposition \ref{subgroups}, $E$ has rank $1$). There is an obvious rational map $\pi\colon C\to E$ of degree $2$, defined over $\Q$.  Let $\widetilde{C}$ be the normalization of $C$. This is a smooth projective curve defined over $\Q$ with the same geometric genus as $C$, and comes equipped with a rational map to the projective closure of $C$ that is an isomorphism outside the singular locus of $C$ (which is a finite set). Hence, it is enough to prove that $\widetilde{C}$ has finitely many rational points. The normalization map, composed with $\pi$, yields a rational map of degree $2$, again defined over $\Q$, $\widetilde{\pi}\colon \widetilde{C}\to E$, and since both curves are smooth this is in fact a morphism. Now, clearly the geometric genus of $\widetilde{C}$ must be at least 1. If it is $\geq 2$, we are done by Faltings' theorem. Assume it is exactly 1. If $\widetilde{C}$ has no rational points there is nothing to prove. If on the other hand $\widetilde{C}$ has a rational point, then it is an elliptic cuve, and the map $\widetilde{\pi}$ is a degree $2$ morphism of elliptic curves. It is well-known that such a map must be the composition of a rational isogeny $\widetilde{C}\to E$ of degree $2$ with a translation. But if there exists a rational isogeny $\widetilde{C}\to E$ of degree $2$, then there exists a rational dual isogeny $E\to \widetilde{C}$ of degree $2$. However, one can check that $E[2]$ is an irreducible $G_\Q$-module, and thus such an isogeny cannot exist.
 \end{proof}

Finally, we shall consider $\mathcal G_5$. We first need the following preliminary lemma.

\begin{lemma}\label{inter}
 Let $u\in \Q^{\times}$ and $f=x^2-u^2$. Let $\{\mathfrak c_i\}_{i\in\N}$ be the adjusted post-critical orbit of $f$. Let $a\in \Z$ be a squarefree integer such that $a\in \langle \mathfrak c_1-u,\mathfrak c_2+u,\ldots,\mathfrak c_n+u\rangle_\Q\cap \langle \mathfrak c_1+u,\mathfrak c_2-u,\ldots,\mathfrak c_n-u\rangle_\Q$ and let $p$ be a prime such that $p\mid a$. Then $v_p(2\mathfrak c_1)\neq 0$.
\end{lemma}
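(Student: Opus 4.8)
The plan is to argue by contradiction. The case $p=2$ is trivial, since $v_2(2\mathfrak c_1)=v_2(2u^2)=1+2v_2(u)\geq 1$; so I would fix an odd prime $p$, observe that for such $p$ one has $v_p(2\mathfrak c_1)=2v_p(u)$, and suppose, towards a contradiction, that $v_p(u)=0$. Write $V'=\langle \mathfrak c_1-u,\mathfrak c_2+u,\dots,\mathfrak c_n+u\rangle_\Q$ and $V''=\langle \mathfrak c_1+u,\mathfrak c_2-u,\dots,\mathfrak c_n-u\rangle_\Q$, and let $\bar v_p\colon\Q^\times/(\Q^\times)^2\to\F_2$ be the reduction of $v_p$. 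Since $a$ is squarefree and $p\mid a$ we have $\bar v_p(a)=1$, and since $a\in V'\cap V''$ this says that $\bar v_p$ is non-trivial on $V'\cap V''$, in particular on both $V'$ and $V''$. The first, elementary step is to record, from $\mathfrak c_1=u^2$ and $\mathfrak c_{i+1}=\mathfrak c_i^2-u^2=(\mathfrak c_i-u)(\mathfrak c_i+u)$ together with $v_p(u)=0$, that every $\mathfrak c_i$ is a $p$-adic integer, that $v_p(\mathfrak c_1\mp u)=v_p(u\mp 1)$, and that for each $i$ at most one of $\mathfrak c_i\pm u$ is divisible by $p$, since their difference is the unit $2u$; hence a generator $\mathfrak c_i+u$ with $i\geq 2$ can fail to be a $p$-adic unit only when $\mathfrak c_i\equiv -u\pmod p$, and $\mathfrak c_i-u$ only when $\mathfrak c_i\equiv u\pmod p$.

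The core of the argument is then an analysis of the orbit $\mathcal O=(\mathfrak c_i\bmod p)_{i\geq 1}$ of $u^2$ under the map $T(x)=x^2-u^2$ on $\F_p$: it is eventually periodic, a tail followed by a cycle, and $T^{-1}(0)=\{u,-u\}$. If $-u\notin\mathcal O$ then no generator $\mathfrak c_i+u$ with $i\geq 2$ is a non-unit, and I would split into two subcases: either $u\not\equiv 1\pmod p$, so $\mathfrak c_1-u=u(u-1)$ is also a $p$-unit and $\bar v_p|_{V'}=0$; or $u\equiv 1\pmod p$, in which case $\mathcal O=(u,0,-u,0,-u,\dots)\bmod p$, so no $\mathfrak c_i$ with $i\geq 2$ is $\equiv u$ and every generator of $V''$ is a $p$-unit, so $\bar v_p|_{V''}=0$; both give a contradiction. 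Symmetrically $u\in\mathcal O$, and the explicit shape of $\mathcal O$ when $u\equiv\pm1$ disposes of those two congruences as well. We are thus reduced to the case $u\not\equiv\pm1\pmod p$ with both $u$ and $-u$ in $\mathcal O$, necessarily at indices $\geq 2$ because $\mathfrak c_1=u^2\not\equiv\pm u$.

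In this remaining case, since $u$ and $-u$ are the two preimages of $0$ and both lie in $\mathcal O$, the vertex $0$ has in-degree $2$ in the ``$\rho$'', so the tail is non-empty, $0$ is exactly where the tail enters the cycle, and $\{u,-u\}$ is the set consisting of the tail-end $\mathfrak c_\pi$ and the cycle-predecessor $\mathfrak c_{\pi+L}$ of $0$, where $L$ is the cycle length and $\pi\geq 2$. A short argument with the $\rho$-structure shows that the only $i\geq 2$ with $\mathfrak c_i\equiv\mathfrak c_\pi\pmod p$ is $i=\pi$, whereas $\mathfrak c_i\equiv\mathfrak c_{\pi+L}\pmod p$ exactly for $i\in\{\pi+L,\pi+2L,\dots\}$; so the only generators of $V'\cup V''$ that can be non-units are the two attached to the index $\pi$ and the ones attached to the indices $\pi+L,\pi+2L,\dots$. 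To control their valuations I would use that one trip around the cycle is a map $T^L$ which, near the recurring zero, satisfies $v_p\big(T^L(x)-T^L(0)\big)=2\,v_p(x)$ for $v_p(x)\geq 1$ (the $L-1$ intermediate values on the cycle are $p$-adic units, so only the first squaring step contributes), so that iterating from $v_p(\mathfrak c_{\pi+1})=v_p(\mathfrak c_\pi-u)$ determines the sequence $v_p(\mathfrak c_{\pi+kL}+u)$. \textbf{The step I expect to be the main obstacle} is exactly this last piece of bookkeeping: one must show that the parities of these few valuations can never be arranged so that $\bar v_p$ remains non-trivial on $V'\cap V''$, and doing this cleanly appears to require not just the behaviour of $\bar v_p$ on $V'$ and $V''$ separately but a genuine description of the intersection $V'\cap V''$, obtained from the relations $\mathfrak c_{i+1}\equiv(\mathfrak c_i-u)(\mathfrak c_i+u)\pmod{(\Q^\times)^2}$, each of which rewrites a generator of $V'$ as a generator of $V''$ times a $\mathfrak c_j$. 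Granting this, the desired contradiction is immediate and the lemma follows.
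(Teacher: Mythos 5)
There is a genuine gap, and it is exactly where you flag it. Your reductions (the case $p=2$, the reduction to $v_p(u)=0$ with $p$ odd, the observation that at most one of $\mathfrak c_i\pm u$ can be divisible by $p$, and the elimination of the congruences $u\equiv\pm1$) are fine, but they only bring you to the heart of the lemma: the case $u\not\equiv\pm1\pmod p$ with both $u$ and $-u$ occurring in the orbit $(\mathfrak c_i\bmod p)_{i\geq 2}$. That case is the entire content of the statement, and your plan for it -- tracking the parities of $v_p(\mathfrak c_{\pi+kL}\pm u)$ and pinning down $V'\cap V''$ -- is left undone, so the proof is not complete. Moreover, the plan is aimed at the wrong target: no parity bookkeeping and no description of the intersection are needed, because the remaining configuration is simply impossible. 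That is what the paper proves: if some $\mathfrak c_i\equiv -u$ and some $\mathfrak c_j\equiv u\pmod p$ with $p\nmid 2u$, then (with $m$ the first index where $p\mid\mathfrak c_m$) rigid divisibility of the critical orbit gives that the zeros of $(\mathfrak c_k\bmod p)$ occur exactly at the multiples of $m$, and the congruence $\mathfrak c_{m+k}\equiv\mathfrak c_k\pmod p$ (valid for $k\geq 2$, with the small adjustment at $k=1$ coming from $\mathfrak c_1=-f(0)$) propagates the value at the index just before a zero; iterating, the value $\mp u$ sitting before the first zero reappears before every later zero, so it must coincide mod $p$ with the value $\pm u$ sitting before the zero at index $j+1$, forcing $u\equiv-u\pmod p$, i.e.\ $p\mid 2u$, a contradiction. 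In your $\rho$-picture this says: since $0$ can only occur on the cycle and its two preimages $\pm u$ both occur, the cycle length equals the first zero index, and the periodicity congruence applied at the tail-end index identifies the tail-end value $u$ with the cycle-predecessor value $-u$.

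Two further remarks. First, because the closing argument is purely about divisibility (one ``$+$''-generator and one ``$-$''-generator divisible by $p$ already force $p\mid 2u$), the squarefree hypothesis on $a$ is only needed to produce odd, hence nonzero, $p$-valuation on some generator of each span; your emphasis on parities of valuations is more structure than the lemma requires. Second, if you do want to salvage your route, the missing ingredient is exactly the rigid-divisibility/periodicity congruence above: once you know $L=\tau$ and $\mathfrak c_{\tau+k}\equiv\mathfrak c_k\pmod p$ for $k\geq 2$, taking $k=\tau-1$ (and noting $\tau=2$ would force $u\equiv 1$) kills the case outright, with no need to compute any of the valuations $v_p(\mathfrak c_{\pi+kL}+u)$.
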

\begin{proof}
 Let $\mathfrak c_1^+\coloneqq \mathfrak c_1-u$ and $\mathfrak c_i^+\coloneqq \mathfrak c_i+u$ for $i\in\{2,\ldots,n\}$ and symmetrically let $\mathfrak c_1^-\coloneqq \mathfrak c_1+u$ and $\mathfrak c_j^-\coloneqq \mathfrak c_j-u$ for $j\in \{2,\ldots,n\}$. The construction of $p$ implies the existence of indexes $i,j$ such that $v_p(\mathfrak c_i^+),v_p(\mathfrak c_j^-)\neq 0$. Notice that, for any $k\in \{1,\ldots,n\}$, if $q$ is a prime then $v_q(\mathfrak c_k^{\pm})<0$ if and only if $v_q(u)<0$, so that in particular $v_q(\mathfrak c_1)\neq 0$. Therefore, if $a,p$ are as in the statement, we can assume that $v_p(u)\geq 0$ and $v_p(\mathfrak c_i^+),v_p(\mathfrak c_j^-)>0$ for some $i,j\in \{1,\ldots,n\}$.
 
 Thus in order to prove the lemma it is enough to prove the following more general claim:
 
 \begin{center}
   if there exists a prime $q$ and indices $i,j$ such that $v_q(\mathfrak c_i^+),v_q(\mathfrak c_j^-)>0$, then $v_q(2\mathfrak c_1)>0$.
 \end{center}
 Assume without loss of generality that $i\leq j$ and $i,j$ are minimal with the above property. Notice that for every $k\geq 1$ we have that $\mathfrak c_k^+\cdot \mathfrak c_k^-=\mathfrak c_{k+1}$. It follows that $v_q(\mathfrak c_{i+1}),v_q(\mathfrak c_{j+1})>0$. By the minimality of $i,j$ we have that $v_q(\mathfrak c_k)=0$ for every $k<i+1$ and thus $i+1\mid j+1$ (cf.\ \cite[Lemma 1.1]{Stoll}). Let $k\geq 1$ be such that $j+1=k(i+1)$, and rewrite the relation as $j=i+(k-1)(i+1)$. Now it is enough to show that for every $\ell\geq 0$ one has $v_q(\mathfrak c_{i+\ell(i+1)}^+)>0$, because this implies that $v_q(\mathfrak c_j^+)>0$, and thus $v_q(\mathfrak c_j^--\mathfrak c_j^+)=v_q(2u)>0$, so in particular $v_q(2\mathfrak c_1)>0$. One proves this by an easy induction, having a little extra care for the case $i=1$. For $\ell=0$ there is nothing to prove. Suppose the claim is true for $\ell-1$. If $i>1$, we have the following (all terms live in $\Z$ localized at the set of primes at which $u$ has negative valuation, and the congruence is taken modulo $q$):
 $$\mathfrak c_{i+\ell(i+1)}^+=\mathfrak c_{i+\ell(i+1)}+u=f^{((\ell-1)(i+1)+i+1)}(\mathfrak c_i)+u\equiv f^{((\ell-1)(i+1)+i+1)}(\pm u)+u.$$
 Now when $i=1$ and $\ell=1$, the right hand side of the congruence above is $-\mathfrak c_1^+$, which is $0$ modulo $q$ by assumption. In every other case, it coincides with $\mathfrak c_{i+(\ell-1)(i+1)}+u=\mathfrak c_{i+(\ell-1)(i+1)}^+$, which is 0 modulo $q$ by the inductive hypothesis. 
\end{proof}

\begin{theorem}
 Assume Vojta's conjecture over $\Q$ and let $\psi\coloneqq x^2-t^2\in\Q(t)[x]$. Then there exists a thin set $E\subseteq \Q$ such that for every $t_0\in \Q\setminus E$ we have $\im(\rho_{\psi_{t_0}})=\mathcal G_5$.
\end{theorem}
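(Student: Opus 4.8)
Write $u:=t_0$ and let $\{\mathfrak c_n\}$ be the adjusted post-critical orbit of $\psi_{t_0}=x^2-u^2$, so $\mathfrak c_1=u^2$ and $\mathfrak c_{n+1}=\mathfrak c_n^2-u^2$. For $u\neq0$ the polynomial $\psi_{t_0}=(x-u)(x+u)$ is non-stable and its roots $\pm u$ are rational, so $G_\Q$ preserves both halves $T_\infty x,T_\infty y$ of the tree; equivalently $\phi_0\circ\iota_*\circ\rho_{\psi_{t_0}}=0$, whence $\im(\rho_{\psi_{t_0}})\subseteq\ker\phi_0=M_{e_0}=\mathcal G_5$. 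Thus $\im(\rho_{\psi_{t_0}})=\mathcal G_5$ is equivalent to $[\Omega_\infty:\im(\rho_{\psi_{t_0}})]=2$, and by Theorem~\ref{nonstable} (with $\gamma=0$) this holds if and only if, setting
\[
d_0=\mathfrak c_1+u,\quad d_i=\mathfrak c_{i+1}-u\ (i\ge1),\quad d_0'=\mathfrak c_1-u,\quad d_i'=\mathfrak c_{i+1}+u\ (i\ge1),
\]
and $D_k:=\langle d_0,\dots,d_k,d_0',\dots,d_k'\rangle_\Q\subseteq\Q^\times/(\Q^\times)^2$, one has $\dim D_k=2(k+1)$ for all $k\ge0$. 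Note that $d_id_i'=\mathfrak c_{i+1}^2-u^2=\mathfrak c_{i+2}$.

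The plan is to reduce this infinite family of conditions to a single one at a bounded level, away from a thin set of $t_0$. Applying Lemma~\ref{primitive_divisors} to $\psi=x^2-t^2$ (so $\gamma(t)=0$, $\mathfrak c(t)=-t^2$) with $g=x-t$ and with $g=x+t$, Remark~\ref{finiteness} produces a finite set $E_0\subseteq\Q$ and an integer $K_0$, both independent of $t_0$, such that for $t_0\notin E_0$ and every $j>K_0$ the integer $d_j$ has a primitive prime divisor $p_j$: $p_j$ is odd, $v_{p_j}(d_j)$ is odd, and $v_{p_j}(d_i)=0$ for all $0\le i<j$; in particular $p_j\nmid d_0=u(u+1)$, so $p_j\nmid u$ and hence $p_j\nmid2u$. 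The same holds for the $d_j'$, with primitive divisors $p_j'\nmid2u$. I would then prove that $\dim D_k=\dim D_{K_0}+2(k-K_0)$ for all $k\ge K_0$. Writing $D_k=\langle d_0,\dots,d_k\rangle+\langle d_0',\dots,d_k'\rangle$ and using inclusion--exclusion, this reduces to two claims: (ii) $\dim\langle d_0,\dots,d_k\rangle=\dim\langle d_0,\dots,d_{K_0}\rangle+(k-K_0)$, and symmetrically, which is immediate from the $p_j$; and (i) $\langle d_0,\dots,d_k\rangle\cap\langle d_0',\dots,d_k'\rangle=\langle d_0,\dots,d_{K_0}\rangle\cap\langle d_0',\dots,d_{K_0}'\rangle$. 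For (i), let $a$ be a squarefree integer in the left-hand intersection; by Lemma~\ref{inter} every prime factor of $a$ divides $2u$. If $a\equiv\prod_{i\in J}d_i\pmod{(\Q^\times)^2}$ with $j^\ast:=\max J>K_0$, then $v_{p_{j^\ast}}(a)$ is odd, so $p_{j^\ast}\mid a$ and thus $p_{j^\ast}\mid2u$, contradicting $p_{j^\ast}$ odd and $p_{j^\ast}\nmid u$; hence $J\subseteq\{0,\dots,K_0\}$, and symmetrically $a\in\langle d_0',\dots,d_{K_0}'\rangle$. Granting the formula, and observing that whenever $\dim D_{K_0}=2(K_0+1)$ the generators of $D_k$ lying in $D_{K_0}$ are independent for every $k\le K_0$, the condition ``$\dim D_k=2(k+1)$ for all $k\ge0$'' is equivalent, for $t_0\notin E_0$, to the single condition $\dim D_{K_0}(t_0)=2(K_0+1)$.

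It remains to show that the set of $t_0$ with $\dim D_{K_0}(t_0)<2(K_0+1)$ is thin. I would first note that $\im(\rho_\psi)=\mathcal G_5$ already over $\Q(t)$: indeed $\psi$ is the specialization at $s=t^2$ of $x^2-s$, whose arboreal image over $\Q(s)$ is all of $\Omega_\infty$ by Proposition~\ref{separability} and Theorem~\ref{Stolltheorem}, while $\Q(t)=\Q(s)(\sqrt s)$ corresponds under $\rho_{x^2-s}$ to $\ker\phi_0=M_{e_0}$. By Theorem~\ref{nonstable} over $\Q(t)$ this means that no nonempty product of the polynomials $d_0(t),\dots,d_{K_0}(t),d_0'(t),\dots,d_{K_0}'(t)\in\Q[t]$ is a perfect square in $\Q[t]$. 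Hence, for every nonempty subset $\Sigma$ of these polynomials, the curve $C_\Sigma\colon y^2=\prod_{g\in\Sigma}g(t)$ is irreducible over $\Q$ and the projection $(t,y)\mapsto t$ is a degree-$2$ morphism to $\mathbb P^1$, so $\pi(C_\Sigma(\Q))$ is thin. Now every $t_0\notin E_0$ with $\dim D_{K_0}(t_0)<2(K_0+1)$ lies on some $C_\Sigma(\Q)$ (note that for $t_0\notin E_0$ all the $d_i(t_0),d_i'(t_0)$ are non-zero, since $d_i(t_0)=0$ or $d_i'(t_0)=0$ would force $\mathfrak c_{i+2}(t_0)=0$, making $0$ preperiodic for $\psi_{t_0}$). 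Therefore $E:=E_0\cup\bigcup_\Sigma\pi(C_\Sigma(\Q))$ is a thin set with $\im(\rho_{\psi_{t_0}})=\mathcal G_5$ for every $t_0\in\Q\setminus E$.

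\textbf{The main obstacle.} Lemma~\ref{primitive_divisors} only certifies that each $d_j$ (resp.\ $d_j'$) is new relative to its own shifted orbit; it controls no interaction between the two orbits $\{d_i\}$ and $\{d_i'\}$, so a priori the two new generators appearing at a given level might fail to be jointly independent from the earlier ones, which would break the dimension count. Overcoming this is exactly where Lemma~\ref{inter} enters: it forces the overlap $\langle d_0,\dots,d_k\rangle\cap\langle d_0',\dots,d_k'\rangle$ to be supported only at primes dividing $2u$, which the primitive divisors then rule out beyond level $K_0$. Everything after that --- the irreducibility and degree of the finitely many curves $C_\Sigma$ --- is routine.
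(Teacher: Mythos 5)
Your proof is correct and follows essentially the same route as the paper's: reduction via Theorem \ref{nonstable}, Lemma \ref{primitive_divisors} applied with $g=x\pm t$ to get primitive prime divisors beyond a uniform level, Lemma \ref{inter} to block interaction between the two families $\{\mathfrak c_i\pm t_0\}$, and irreducibility of the finitely many double covers (deduced from $\im(\rho_\psi)=\mathcal G_5$ over $\Q(t)$ via $x^2-s$ at $s=t^2$) to make the exceptional set thin. The only difference is organizational --- you prove stabilization of the intersection $\langle d_i\rangle\cap\langle d_i'\rangle$ and linear growth of each span, whereas the paper runs an induction showing the intersection stays trivial beyond level $n$ --- which amounts to the same argument.
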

\begin{proof}
 Let us start by noticing that $\im(\rho_{\psi})=\mathcal G_5$. In fact, since the arboreal representation of $\phi=x^2-t$ has image $\Omega_{\infty}$, then so does the one of $x^2-t^2$ seen as a polynomial with coefficients in $\Q(t^2)$. It follows that $\im(\rho_{\psi})$ is an index two subgroup of $\Omega_{\infty}$ which is contained in $\mathcal G_5$, and so it actually coincides with it. By Theorem \ref{nonstable} we therefore have that for every $m\in\N$:
 \begin{equation}\label{fullsize}
  \dim \langle \mathfrak c_1(t)\pm t,\ldots,\mathfrak c_m(t)\pm t\rangle_{\Q(t)}=2m.
 \end{equation}
Now let $g_1=x+t$ and $g_2=x-t$. Let $n_1$ and $n_2$ be the positive integers determined by Lemma \ref{primitive_divisors} for $g_1\circ\psi$ and $g_2\circ\psi$, let $F_1$ and $F_2$ be the finite sets of exceptions (cf.\ Remark \ref{finiteness}) and let $n\coloneqq \max\{n_1,n_2\}$. The set of $t_0\in \Q$ such that
 $$\dim \langle \mathfrak c_1(t_0)\pm t_0,\ldots,\mathfrak c_n(t_0)\pm t_0\rangle_{\Q}<2n$$
 coincides with the set of $t_0$'s that appear as $t$-coordinate of at least a curve of the form $y^2=\prod_{i=1}^n(\mathfrak c_i(t)-t)^{e_i}(\mathfrak c_i(t)+t)^{f_i}$, where $e_i,f_i\in\{0,1\}$ are not all 0. On the other hand these curves are all irreducible because of \eqref{fullsize}. Let $E$ be the subset of all $t_0\in\Q$ such that at least one of the aforementioned curves has a rational point with $t$-coordinate $t_0$. Notice that $E$ is a thin set by definition.
 
  Now let $t_0\in \Q\setminus (E\cup F_1\cup F_2)$. From now on, for every $r\in \N$, let us set $V_r^+\coloneqq \langle \mathfrak c_1(t_0)-t_0,\mathfrak c_2(t_0)+t_0,\ldots,\mathfrak c_r(t_0)+t_0\rangle_{\Q}$ and $V_r^-\coloneqq \langle \mathfrak c_1(t_0)+t_0,\mathfrak c_2(t_0)-t_0,\ldots,\mathfrak c_r(t_0)-t_0\rangle_{\Q}$. Let $m\geq n$. Clearly, $\dim V_m^+=\dim V_m^-=m$, because since $a\notin F_1\cup F_2$ then for every $n'\geq n$ there exists a primitive prime divisor of $\mathfrak c_{n'}(t_0)+t_0=g_1(\psi^{n'-1}(0))$ (resp.\ $\mathfrak c_{n'}(t_0)-t_0=g_2(\psi^{n'-1}(0))$). To conclude the proof it is enough, by Theorem \ref{nonstable}, to show that if $b\in V_m^+\cap V_m^-$, where $b$ is a squarefree integer, then $b=1$. This is done by an easy induction. For $m=n$, the claim is true by construction. Let it be true for $m-1$ and pick $b$ as above. Since $V_{m-1}^+\cap V_{m-1}^-=\{1\}$, it follows that, wlog, $|b|=|(\mathfrak c_m(t_0)+t_0)\cdot (\mathfrak c_1(t_0)-t_0)^{r_1}\prod_{i=2}^{m-1}(\mathfrak c_i(t_0)+t_0)^{r_i}\cdot d^2|$, where $r_i\in \{0,1\}$ for all $i$, $d\in \Q^{\times}$ and $|\cdot |$ denotes the standard absolute value. Now let $p$ be a primitive prime divisor of $\mathfrak c_m(t_0)+t_0$. Then clearly $v_p(b)\equiv 1 \bmod 2$, and since $b\in V_m^-$ as well, by Lemma \ref{inter} we must have that $v_p(2\mathfrak c_1(t_0))\neq 0$. Since $p$ is odd, it must be $v_p(\mathfrak c_1(t_0))=v_p(t_0^2)\neq 0$, so that in particular $v_p(t_0)\neq 0$. But $\mathfrak c_1(t_0)+t_0=t_0(t_0+1)$ and therefore $v_p(\mathfrak c_1(t_0)+t_0)\neq 0$, contradicting the primitivity of $p$.
\end{proof}

\begin{remark}\label{group5r}
 Notice if $E\subseteq \Q$ is thin, then there are infinitely many integers outside of $E$ (see for example \cite[Theorem 3.4.4]{serre}). Thus, the above theorem shows that, under Vojta's conjecture, there exist infinitely many integral specializations $t_0$ of $t$ yielding $\mathcal G_5$ as image of $\rho_{\psi_{t_0}}$.
\end{remark}
\section{Proof of Theorem C}\label{proofC}
The goal of this section is to prove the following. 
\begin{theorem} \label{thmC}
Let $\underline{a},\underline{a}^{'}$ be two vectors in $\mathbb{F}_2^{(\mathbb{Z}_{\geq 0})}$. Then 
$$M_{\underline{a}} \cong_{\textup{top.gr.}} M_{\underline{a}^{'}}
$$
if and only if 
$$\underline{a}=\underline{a}^{'}.
$$
\end{theorem}
Since the proof is rather involved, before going into its technical implementation we offer here an high level outline overviewing the main ideas.

\textbf{Overview of the proof:} The first basic observation at the root of this proof, is that vectors $\underline{a}$ with $a_0=1$ and with $a_n=1$ for some $n>0$ are distinguished from others, from Theorem \ref{Describing abelianized Ma}, as those providing groups whose abelianization has an element of order $4$. The second one, is that the vector $(1,0, \ldots, 0,\ldots)$ is also distinguished by having a set of generators that can be put in two infinite blocks $B_1,B_2$, where every element of $B_1$ commutes with every element of $B_2$. This does not happen for any other vector. We formalize this type of property using the \emph{graph of commutativity}, introduced below. 

These two observations are already sufficient to distinguish a few of the vectors, but not quite all of them, since as soon as both vectors start with $0$, or when they both start with $1$ and they have both at least two $1$'s, then the two observations above do not tell us anything. The next step is to notice that if we ideally had a functor from profinite groups to profinite groups, that has the effect of shifting the relation to the left, then the two observations above would alone suffice after repeatedly applying the functor. In practice we are only able to produce functorially a group that is closely related to the one with the relation shifted to the left. The functor under consideration will be nothing else than taking commutators, and hence, upon iteration, we will be looking at the derived series of each $M_{\underline{a}}$. Recall that for a profinite group $\mathcal{G}$, the derived series is defined by putting $\mathcal{G}^{(0-\text{Fr.})}\coloneqq\mathcal{G}$ and
$$\mathcal{G}^{((i+1)-\text{Fr.})}\coloneqq[\mathcal{G}^{(i-\text{Fr.})},\mathcal{G}^{(i-\text{Fr.})}],
$$
while the lower central series is defined by putting 
$$\mathcal{G}^{(0)}\coloneqq\mathcal{G},
$$
and 
$$\mathcal{G}^{(i+1)}\coloneqq [\mathcal{G},\mathcal{G}^{(i)}].
$$
We will see that the maximal number of connected components of the graphs of commutativity along the derived series is eventually reading off precisely the largest integer $i$ with $a_i=1$ (or whether there is no such integer). The previous $1$'s in the vector are then read off by looking at which terms of the derived series are topologically generated by involutions. 

To study these derived series, we will exploit the self-replicating structure of the modules $\mathbb{F}_2[L_N]$ over the rings $\mathbb{F}_2[\Omega_N]$, which is crystallized in the exact sequence of Proposition \ref{father-son sequence}. This fundamental exact sequence will also allow us to quickly reprove classical theorems along the way, such as Kaloujnine's description of the group of the lower central series $\Omega_{\infty}^{(i)}$ \cite{kaloujnine}, to obtain a precise description of the derived series $\Omega_{\infty}$ (and of its maximal subgroups), as well as the uni-seriality of the $\mathbb{F}_2[\Omega_N]$-module $\mathbb{F}_2[L_N]$ \cite{grigorchuk}. The computation of the lower central series plays a key role in the computation of the derived series. 

We now proceed delving into the actual proof.

One of the two invariants that we will use to distinguish isomorphism classes of profinite groups is the aforementioned \emph{graph of commutativity}, which goes as follows. Let $\mathcal{G}$ be a profinite group and let $S\subseteq \mathcal{G}$ be a set of topological generators.

\begin{definition}
 The \emph{graph of commutativity} of $\mathcal G$ with respect to $S$, denoted by $\Gamma(\mathcal G,S)$, is the graph with vertex set $S$, and such that $g,h\in S$ are connected by an edge if and only if $[g,h]\neq 1$.
\end{definition}

This provides us with the following first invariant of a profinite group. 

\begin{definition}
Let $\mathcal{G}$ be a profinite group. We say that $\mathcal{G}$ \emph{admits a disconnected presentation}, in case there exists a set of topological generators $S$ of $\mathcal{G}$ not containing the identity, such that $\Gamma(\mathcal{G},S)$ is disconnected. In case no such $S$ exists, we say that $\mathcal{G}$ \emph{admits only connected presentations}.     
\end{definition}

Next, we refine the previous definition to get a measure of how disconnected a profinite group can be. 
\begin{definition}
Let $\mathcal{G}$ be a profinite group and $h$ a positive integer. We say that $\mathcal{G}$ has $h$ components in case $h$ is the largest number of connected components of $\Gamma(\mathcal{G},S)$ as $S$ varies among the possible sets of topological generators not containing the identity.    
\end{definition}

Finally, we have a more straightforward invariant. 
\begin{definition}
Let $\mathcal{G}$ be a profinite group. We say that $\mathcal{G}$ can be generated by involutions, in case there exists a set $S$ of topological generators for $\mathcal{G}$ consisting entirely of involutions. In case no such $S$ exists, we say that $\mathcal{G}$ cannot be generated by involutions.    
\end{definition}
For a non-zero element $\underline{a}=(a_n)_{n\ge 0}\in\mathbb{F}_2^{(\mathbb{Z}_{\geq 0})}$ we define $i_{\max}(\underline{a})$ to be the largest integer $i$ with $a_i=1$, and we set $i_{\max}(\underline{0})=+\infty$. By convention we have that $j<+\infty$, for every integer $j$.  

We are now ready to state the first reduction step in the proof of Theorem \ref{thmC}.
\begin{proposition}\label{prop1}
Let $\underline{a}$ be a vector in $\mathbb{F}_2^{(\mathbb{Z}_{\geq 0})}$ and let $j$ be a non-negative integer. Then the following three hold true.
\begin{enumerate}[(a)]
\item The profinite group $\Omega_{\infty}^{(j-\textup{Fr.})}$ admits only connected presentations. 
\item For all $j>i_{\max}(\underline{a})$, the profinite group $M_{\underline{a}}^{(j-\textup{Fr.})}$ has $2^{i_{\max}(\underline{a})+1}$ components.
\item Let $j<i_{\max}(\underline{a})$. If $a_j=1$, the profinite group $M_{\underline{a}}^{(j-\textup{Fr.})}$ cannot be generated by involutions. Instead, if $a_j=0$ then $M_{\underline{a}}^{(j-\textup{Fr.})}$ can be generated by involutions. 
\end{enumerate}
\end{proposition}
Let us now prove that Proposition \ref{prop1} implies Theorem \ref{thmC}.

\subsection{Proposition \ref{prop1} implies Theorem \ref{thmC}} \label{sbs: from 1 to C}

In this subsection we show that we can deduce Theorem \ref{thmC} from Proposition \ref{prop1}. 

\begin{proof} Let us assume Proposition \ref{prop1} holds and let $\underline{a},\underline{a}^{'}$ be two vectors in $\mathbb{F}_2^{(\mathbb{Z}_{\geq 0})}$ such that 
$$M_{\underline{a}} \cong_{\textup{top.gr.}} M_{\underline{a}^{'}}.
$$
For every non-negative integer $j$, we must then have
$$M_{\underline{a}}^{(j-\text{Fr.})} \cong_{\textup{top.gr.}} M_{\underline{a}^{'}}^{(j-\text{Fr.})},
$$
so that in particular both groups have the same the same number of components. Parts $(a)$ and $(b)$ of Proposition \ref{prop1} show then immediately that $\underline{a}=\underline{0}$ if and only if $\underline{a}'=\underline{0}$.

We can then assume, from now on, that both vectors are non zero, so that $i_{\max}(\underline{a}),i_{\max}(\underline{a}')<+\infty$. Part $(b)$ of Proposition \ref{prop1} gives us, choosing $j$ large enough, that $2^{i_{\text{max}}(\underline{a})+1}=2^{i_{\text{max}}(\underline{a}^{'})+1}$, so that 
$$i_{\text{max}}(\underline{a})=i_{\text{max}}(\underline{a}^{'}),
$$
and we can therefore call $i$ such index.
Now, in case there exists $j<i$ with $a_j=1, a_j^{'}=0$, then part $(c)$ of Proposition \ref{prop1} implies that $M_{\underline{a}}^{(j-\text{Fr.})}$ cannot be generated by involutions, while $M_{\underline{a}^{'}}^{(j-\text{Fr.})}$ can be generated by involutions. It follows that that 
$$M_{\underline{a}}^{(j-\text{Fr.})} \not \cong_{\textup{top.gr.}} M_{\underline{a}^{'}}^{(j-\text{Fr.})},
$$
which contradicts 
$$M_{\underline{a}} \cong_{\textup{top.gr.}} M_{\underline{a}^{'}}.
$$
The same argument works with the roles of the two vectors swapped. Therefore we deduce for each $j<i$ that $a_j=1$ if and only if $a_j^{'}=1$. Overall we have exactly proved that
$$\underline{a}=\underline{a}^{'}.
$$
\end{proof}

Our next goal is to prove Proposition \ref{prop1}. To this end we will provide an explicit description of $M_{\underline{a}}^{(j-\text{Fr.})}$ for each non-negative integer $j$. To state this description, we first need a definition. 
\begin{definition}
Let $h$ be in $\mathbb{Z}_{\geq 0}$ and $\underline{a}$ be in $\mathbb{F}_2^{(\mathbb{Z}_{\geq 0})}$. We define
$$(M_{\underline{a}})_{\text{fib}}^h \coloneqq\{(\sigma_1,\ldots,\sigma_j) \in M_{\underline{a}}^{h}: \forall i \in \mathbb{Z}_{\geq 0} \ \phi_i(\sigma_1)=\ldots=\phi_i(\sigma_h)\}.
$$
\end{definition}
Let $\sh$ be the left shift operator, i.e.\ the unique $\mathbb{F}_2$-linear endomorphism of $\mathbb{F}_2^{(\mathbb{Z}_{\geq 0})}$ given by
$$\sh((a_n)_{n\ge 0})\coloneqq (a_{n+1})_{n\ge 0}.$$
We denote by $\sh_n$ the $n$-th iteration of $\sh$. We have the following. 

\begin{proposition}\label{prop2}
Let $j$ be in $\mathbb{Z}_{\geq 0}$ and $\underline{a}$ be in $\mathbb{F}_2^{(\mathbb{Z}_{\geq 0})}$. The following two hold true.

\begin{enumerate}[(a)]
    \item Suppose that $j \leq i_{\textup{max}}(\underline{a})$. Then:
$$M_{\underline{a}}^{(j-\textup{Fr.})} \cong_{\textup{top.gr.}}(M_{\textup{sh}_j(\underline{a})})_{\textup{fib}}^{2^j}.
$$
\item Suppose that $j>i_{\textup{max}}(\underline{a})$. Then:
$$M_{\underline{a}}^{(j-\textup{Fr.})} \cong_{\textup{top.gr.}}((\Omega_{\infty})_{\textup{fib}}^{2^{j-i_{\max}(\underline{a})}})^{2^{i_{\max}(\underline{a})+1}}.
$$  
\end{enumerate}
\end{proposition}
The next proposition is the second ingredient needed to prove Proposition \ref{prop1}.   
\begin{proposition} \label{prop:generatedbyinvolutions}
Let $\underline{a}$ be in $\mathbb{F}_2^{(\mathbb{Z}_{\geq 0})}$ and let $h$ be a positive integer. Then
$$(M_{\underline{a}})_{\textup{fib}}^{h}
$$
cannot be generated by involutions if and only if $a_0=1$ and $i_{\max}(\underline{a})>0$.  
\end{proposition}

Let us now prove that Propositions \ref{prop2} and \ref{prop:generatedbyinvolutions} imply Proposition \ref{prop1}.

\subsection{Propositions \ref{prop2} and \ref{prop:generatedbyinvolutions} imply Proposition \ref{prop1}} \label{sbs:from 2 to 1}

In this subsection we show that we can deduce Proposition \ref{prop1} from Propositions \ref{prop2} and \ref{prop:generatedbyinvolutions}. To this end we will need four auxiliary facts. The first one will enable us to control graphs of commutativity of the relevant groups. 

\begin{proposition} \label{stuff do not commute often}
Let $\tau_1,\tau_2$ be in $\Omega_{\infty}$ with $\phi_0(\tau_1)=1$ and $\tau_2$ is not in $\{\tau_1,\text{id}\}$ modulo commutators. Then $\tau_1$ and $\tau_2$ do not commute. 
\end{proposition}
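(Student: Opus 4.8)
The plan is to prove the contrapositive: assuming $\tau_1\tau_2=\tau_2\tau_1$, I will show that $\tau_2$ lies in $\{\tau_1,\text{id}\}$ modulo $[\Omega_{\infty},\Omega_{\infty}]$. Two facts already in the excerpt will be used repeatedly: first, that $[\Omega_{\infty},\Omega_{\infty}]=\bigcap_{i\geq 0}\ker(\phi_i)$, so that two elements of $\Omega_{\infty}$ are congruent modulo commutators exactly when all the $\phi_i$ take the same value on them; second, that (as in the proof of Proposition \ref{field theoretic description of phi,tildephi}) $\phi_i(\sigma)$ is the parity of the number of level-$i$ vertices $v$ at which $\sigma$ swaps the two children $xv,yv$. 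In particular $\phi_0(\sigma)=1$ means precisely that $\sigma$ interchanges the two subtrees $T_\infty x$ and $T_\infty y$, and if $\sigma$ fixes both $T_\infty x$ and $T_\infty y$ setwise then, writing $\sigma_x,\sigma_y\in\Omega_{\infty}$ for its two restrictions under the canonical self-similar identifications $T_\infty x\cong T_\infty\cong T_\infty y$ from Section \ref{basics}, one has $\phi_i(\sigma)=\phi_{i-1}(\sigma_x)+\phi_{i-1}(\sigma_y)$ for every $i\geq 1$.

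The first step is a reduction. If $\phi_0(\tau_2)=1$, set $\sigma\coloneqq\tau_1^{-1}\tau_2$; then $\sigma$ still commutes with $\tau_1$ (indeed $\sigma\tau_1=\tau_1^{-1}\tau_2\tau_1=\tau_1^{-1}\tau_1\tau_2=\tau_2=\tau_1\sigma$) and $\phi_0(\sigma)=\phi_0(\tau_1)+\phi_0(\tau_2)=0$. If $\phi_0(\tau_2)=0$, set $\sigma\coloneqq\tau_2$. Since $\Omega_{\infty}^{\mathrm{ab}}\cong\mathbb{F}_2^{\mathbb{Z}_{\geq 0}}$ is $2$-torsion, $\tau_1^{-1}\equiv\tau_1$ modulo commutators, so in either case it suffices to establish the following claim: \emph{if $\sigma\in\Omega_{\infty}$ commutes with $\tau_1$ and $\phi_0(\sigma)=0$, then $\sigma\in[\Omega_{\infty},\Omega_{\infty}]$}; this gives $\tau_2\equiv\text{id}$ in the first subcase and $\tau_2\equiv\tau_1$ in the second, contradicting the hypothesis.

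To prove the claim, note that $\phi_0(\sigma)=0$ forces $\sigma(T_\infty x)=T_\infty x$ and $\sigma(T_\infty y)=T_\infty y$, while $\phi_0(\tau_1)=1$ forces $\tau_1(T_\infty x)=T_\infty y$. Let $\beta\in\Omega_{\infty}$ be the automorphism induced, via the canonical identifications, by the restriction $\tau_1|_{T_\infty x}\colon T_\infty x\to T_\infty y$. Evaluating the relation $\sigma\tau_1=\tau_1\sigma$ on $T_\infty x$ and reading it through the identifications yields $\sigma_y\circ\beta=\beta\circ\sigma_x$, i.e.\ $\sigma_y=\beta\sigma_x\beta^{-1}$. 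Because each $\phi_{i-1}$ is a homomorphism into the abelian group $\mathbb{F}_2$, it is conjugation-invariant, hence $\phi_{i-1}(\sigma_y)=\phi_{i-1}(\sigma_x)$; therefore for every $i\geq 1$ we obtain $\phi_i(\sigma)=\phi_{i-1}(\sigma_x)+\phi_{i-1}(\sigma_y)=0$, while $\phi_0(\sigma)=0$ by assumption. Thus $\sigma\in\bigcap_{i\geq 0}\ker(\phi_i)=[\Omega_{\infty},\Omega_{\infty}]$, which proves the claim and hence the proposition.

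I do not anticipate a real obstacle: the argument is short, and the only delicate point is the bookkeeping with the self-similar identifications $T_\infty x\cong T_\infty\cong T_\infty y$ and the compatibility of the characters $\phi_i$ with passage to subtrees. This is exactly the wreath-product description of $\Omega_{\infty}$ recalled in Section \ref{basics} together with the combinatorial description of $\phi_i$ as a count of swaps at a fixed level used in Proposition \ref{field theoretic description of phi,tildephi}, so it should be routine to spell out.
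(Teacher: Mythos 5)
Your proof is correct. The paper argues directly rather than by contraposition: after the same reduction to the case $\phi_0(\tau_2)=0$, it picks $i\geq 1$ with $\phi_i(\tau_2)=1$ and uses the uncertain additivity \eqref{uncertain_additivity} of $\widetilde{\phi}_i(x)$ to show that $\widetilde{\phi}_i(x)(\tau_1\tau_2)$ and $\widetilde{\phi}_i(x)(\tau_2\tau_1)$ differ by $\phi_i(\tau_2)\neq 0$, so the two products cannot coincide. You instead prove the contrapositive: a commuting element $\sigma$ with $\phi_0(\sigma)=0$ preserves the two principal subtrees, the relation $\sigma\tau_1=\tau_1\sigma$ forces $\sigma_y=\beta\sigma_x\beta^{-1}$ with $\beta$ induced by $\tau_1|_{T_\infty x}$, and conjugation-invariance of the characters gives $\phi_i(\sigma)=\phi_{i-1}(\sigma_x)+\phi_{i-1}(\sigma_y)=0$ for all $i\geq 1$, hence $\sigma\in\bigcap_i\ker\phi_i=[\Omega_\infty,\Omega_\infty]$. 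The two arguments are at bottom the same computation in different packaging: for $\sigma\in\ker\phi_0$ one has $\widetilde{\phi}_i(x)(\sigma)=\phi_{i-1}(\sigma_x)$ and $\widetilde{\phi}_i(y)(\sigma)=\phi_{i-1}(\sigma_y)$, and the fact that conjugating by an element with $\phi_0=1$ exchanges these two quantities is exactly what \eqref{uncertain_additivity} encodes. Your version is a bit more self-contained (it uses only the wreath-product decomposition and the combinatorial reading of $\phi_i$ already present in the proof of Proposition \ref{field theoretic description of phi,tildephi}, never the $\widetilde{\phi}_i$ formalism) and isolates the slightly sharper statement that the centralizer of any $\tau_1$ with $\phi_0(\tau_1)=1$ maps into $\{\mathrm{id},\tau_1\}$ modulo commutators; the paper's version stays closer to the $\widetilde{\phi}_i$ machinery it has already built for Theorem \ref{Describing abelianized Ma}.
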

\begin{proof}
First observe that we can assume that $\phi_0(\tau_2)=0$. Indeed if this is not the case, then $\tau_1\tau_2$ is also not in $\{\tau_1,\text{id}\}$ modulo commutators and if $\tau_1$ does not commute with $\tau_1\tau_2$ then it does not commute with $\tau_2$ as well. 

Next, notice that there must exist $i \in \mathbb{Z}_{\geq 1}$ such that $\phi_i(\tau_2)=1$, otherwise $\tau_2\in [\Omega_\infty,\Omega_\infty]$. On the other hand observe that, thanks to the formula of uncertain additivity \eqref{uncertain_additivity} given in Section \ref{subgroups of index 2, abstract} and the fact that $\phi_{0}(\tau_1)=1$ and $\phi_0(\tau_2)=0$, we have that
$$\widetilde{\phi}_i(x)(\tau_2\tau_1)=\widetilde{\phi}_i(y)(\tau_2)+\widetilde{\phi}_i(x)(\tau_1),
$$
and
$$\widetilde{\phi}_i(x)(\tau_1\tau_2)=\widetilde{\phi}_i(x)(\tau_1)+\widetilde{\phi}_i(x)(\tau_2).
$$
Adding term to term, we obtain that 
$$\widetilde{\phi}_i(x)(\tau_2\tau_1) \neq \widetilde{\phi}_i(x)(\tau_1\tau_2),
$$
so that in particular $\tau_2\tau_1 \neq \tau_1\tau_2$. 
\end{proof}

The second auxiliary fact is a basic property of $\Omega_{\infty}$.

\begin{proposition} \label{infinite conjugacy classes}
Every element of $\Omega_{\infty}$ different from the identity has an infinite conjugacy class.
\begin{proof}
First, recall that the conjugacy class of an element $g$ of a group $G$ is infinite if and only if the centralizer of $g$ in $G$, denoted by $C_G(g)$, has infinite index in $G$.

Let $\sigma\in \Omega_\infty$ be such that $\phi_0(\sigma) \neq 0$. Proposition \ref{stuff do not commute often} implies that $C_{\Omega_{\infty}}(\sigma)$ maps inside a $1$-dimensional vector space of $\Omega_{\infty}^{\text{ab}}$, namely the one generated by $\sigma$. This has infinite index in $\Omega_{\infty}^{\text{ab}}$, since this group is infinite. In particular $C_{\Omega_{\infty}}(\sigma)$ has infinite index in $\Omega_{\infty}$. 

Let now $\sigma\in \Omega_\infty$ be different from the identity, and let $i$ be the first level such that $\sigma$ acts non-trivially on $L_i$. This means that we can review naturally $\sigma$ as an element of $\Omega_{\infty}^{2^{i-1}} \subseteq \Omega_{\infty}$. In particular one of the $2^{i-1}$ coordinates must be a $\tau\in \Omega_\infty$ with $\phi_0(\tau) \neq 0$: if not, $\sigma$ would be trivial also on $L_i$. But then it means that $\tau$ has infinite conjugacy class with respect to the subgroup $\Omega_{\infty}^{2^{i-1}} \subseteq \Omega_{\infty}$, which implies certainly that also $\sigma$ does with respect to the ambient group $\Omega_{\infty}$.    
\end{proof}
\end{proposition}

Next, we prove a proposition that guarantees the connectedness of $(\Omega_{\infty})_{\text{fib}}^{h}$. 

\begin{proposition} \label{prop: graphsofcommutativity}
Let $h$ be a positive integer. Then $(\Omega_{\infty})_{\textup{fib}}^{h}$ admits only connected presentations.      
\end{proposition}
\begin{proof}
Let $S$ be a set of topological generators of $(\Omega_{\infty})_{\textup{fib}}^{h}$ not containing the identity. We will prove that $\Gamma((\Omega_\infty)_{\textup{fib}}^{h},S)$ is starlike, in the following sense. We show how to produce a subset $S_0\subseteq S$ such that:
\begin{itemize}
    \item There exists a node $\sigma_0$ of $S_0$ that is connected to all other nodes of $S_0$.
    \item Every $\tau\in S\setminus S_0$ is connected to some node of $S_0$.  
\end{itemize}

This obviously shows that $\Gamma((\Omega_\infty)_{\textup{fib}}^{h},S)$ is connected.

To achieve this, pick $S_0$ a subset of $S$ that is sent bijectively into a minimal set of generators through the vector of characters $(\phi_s)_{s \geq 0}$ (notice that in principle for each $i$ we have $h$ such characters, one for each projection, but they all coincide thanks to the definition of $(\Omega_{\infty})_{\textup{fib}}^{h}$).


Let now $\sigma_0\in S_0$ be such that $\phi_0(\sigma_0) \neq 0$. Since any two distinct elements of $S_0$ are linearly independent in $\F_2^{\Z_{\ge 0}}$ by construction, it follows that every node of $S_0$ is connected to $\sigma_0$ thanks to Proposition \ref{stuff do not commute often}. 

Finally, take an element $\tau\in S\setminus S_0$. It cannot be the identity by assumption. Therefore there exists some $s\in \{1,\ldots,h\}$ such that the $s$-th coordinate projection of $\tau$ is non-trivial. But then, the $s$-th coordinate projection of $\tau$ cannot commute with the entire $s$-th coordinate projection of $S$. Indeed if it does, then it would commute with the entire $\Omega_{\infty}$ (observe that by construction the $s$-th coordinate projection of $S_0$ is a minimal set of topological generators for $\Omega_{\infty}$). However Proposition \ref{infinite conjugacy classes} shows in particular that the center of $\Omega_{\infty}$ is trivial (and we know that the $s$-th coordinate projection of $\tau$ is not). Hence we have shown that every element of $S$ is connected to an element of $S_0$.
\end{proof}

We conclude the list of auxiliary propositions with the following elementary one. 

\begin{proposition} \label{it has h components}
Let $\mathcal{G}$ be a profinite group and let $h$ be a positive integer. Suppose that $\mathcal{G}$ admits only connected presentations. Then $\mathcal{G}^h$ has $h$ components.     
\begin{proof}
Let $S$ be a set of topological generators of $\mathcal{G}^h$ not containing the identity. For every integer $k\in \{1,\ldots,h\}$, we denote by $S_k$ the subset of elements of $S$ having non-trivial $k$-th coordinate projection. The $k$-th projection of $S_k$ is a set of topological generators of $\mathcal{G}$ not containing the identity. Therefore it must give a connected graph by assumption. It follows that $S_k$ is entirely contained in a connected component of $\Gamma(\mathcal{G}^h,S)$. It follows that the number of connected components does not exceed $h$. 

Conversely, if we start with a set of topological generators $S'$ of $\mathcal{G}$ not containing the identity, we can construct a set $S$ of topological generators of $\mathcal{G}^h$ by taking $h$ copies of $S'$, one for each coordinate. Clearly $\Gamma(\mathcal{G},S)$ has at least $h$ connected components. 

Since the number of connected components cannot exceed $h$ and can reach $h$, we arrive precisely at the desired conclusion. 
\end{proof}
\end{proposition}

\emph{Conclusion of the proof that Proposition \ref{prop2} and Proposition \ref{prop:generatedbyinvolutions} imply Proposition \ref{prop1}.}
\begin{proof} Let us assume that Proposition \ref{prop2} and \ref{prop:generatedbyinvolutions} hold.

Part $(a)$ of Proposition \ref{prop1} follows from part $(a)$ of Proposition \ref{prop2} together with Proposition \ref{prop: graphsofcommutativity}.

Let now $\underline{a}\in \mathbb{F}_2^{(\mathbb{Z}_{\geq 0})}$ be non-zero, so that $i_{\max}(\underline{a})<+\infty$.

To prove part $(b)$ of Proposition \ref{prop1}, notice that if $j>i_{\max(\underline{a})}$ then by part $(b)$ of Proposition \ref{prop2} we get that:
$$M_{\underline{a}}^{(j-\textup{Fr.})} \cong_{\textup{top.gr.}}((\Omega_{\infty})_{\textup{fib}}^{2^{j-i_{\max}(\underline{a})}})^{2^{i_{\max}(\underline{a})+1}}.
$$  

By Proposition \ref{prop: graphsofcommutativity}, the group $(\Omega_{\infty})_{\textup{fib}}^{2^{j-i_{\max}(\underline{a})}}$ admits only connected presentations, and Proposition \ref{it has h components} concludes the proof.

We are left with proving part $(c)$ of Proposition \ref{prop1}. Let then $j<i_{\max}(\underline{a})$ be a non-negative integer. By part $(a)$ of Proposition \ref{prop2} we get that 
$$M_{\underline{a}}^{(j-\textup{Fr.})} \cong_{\textup{top.gr.}}(M_{\textup{sh}_j(\underline{a})})_{\textup{fib}}^{2^j},$$
and since $a_j=1$ if and only if $\sh_j(\underline{a})_0=1$, Proposition \ref{prop:generatedbyinvolutions} implies that $M_{\underline{a}}^{(j-\text{Fr.})}
$
cannot be generated by involutions if and only if $a_j=1$.
\end{proof}

We are now left with proving Proposition \ref{prop2} and Proposition \ref{prop:generatedbyinvolutions}. To this end we need an intermezzo on the derived and lower central series of $\Omega_{\infty}$. 
\subsection{Intermezzo: the derived and lower central series of \texorpdfstring{$\Omega_{\infty}$}{}}
If $G$ is a group, we denote by $I_{G}$ the augmentation ideal in the group ring $\mathbb{F}_2[G]$. Let us start with an easy fact. 
\begin{proposition} \label{derandcent in semidir}
Let $G$ be a finite group and $A$ be an $\mathbb{F}_2[G]$-module. Let $i$ be a non-negative integer. Then
$$(A \rtimes G)^{(i)}=I_G^i\cdot A \rtimes G^{(i)}
$$
and if $i>0$ then
$$(A \rtimes G)^{(i-\textup{Fr.})}=I_{G^{(i-1)-\textup{Fr.}}}\cdot I_{G^{(i-2)-\textup{Fr.}}}\cdot \ldots \cdot I_G \cdot A \rtimes G^{(i-\textup{Fr.})}
$$
\begin{proof}
Both identities follows immediately from applying repeatedly the case $i=1$. This single case is a straightforward computation.     
\end{proof}
\end{proposition}

Recall the digital representation $\sigma=(\sigma_N)_{N \in \mathbb{Z}_{\geq 1}}$ introduced in Definition \ref{digital_representation}. Proposition \ref{derandcent in semidir} immediately implies the following thanks to the iterated semi-direct product description of $\Omega_{\infty}$.  

\begin{corollary} \label{reducing descending central to augmentational}
Let $i$ be a positive integer. Then:
$$
\Omega_{\infty}^{(i)}=\{(\sigma_N)_{N \in \mathbb{Z}_{\geq 0}}: \sigma_N \in I_{\Omega_{N}}^{i} \cdot \mathbb{F}_2[L_N] \ \text{for each} \ N \in \mathbb{Z}_{\geq 0} \}.
$$
\end{corollary}
Thanks to Corollary \ref{reducing descending central to augmentational} we are reduced to examine the filtration 
$$\{I_{\Omega_N}^{i} \cdot\mathbb{F}_2[L_N]\}_{i \geq 0}
$$
for every $N\geq 0$.  To this end the following proves to be crucial. Observe that for a positive integer $N$ we have two natural maps of $\mathbb{F}_2[\Omega_N]$-modules\footnote{Here $\mathbb{F}_2[L_{N-1}]$ is considered as a $\mathbb{F}_2[\Omega_N]$-module via the natural projection $\Omega_N \twoheadrightarrow \Omega_{N-1}$.}
$$s_N: \mathbb{F}_2[L_{N-1}] \to \mathbb{F}_2[L_N]
$$
and
$$f_N: \mathbb{F}_2[L_N] \to \mathbb{F}_2[L_{N-1}],
$$
defined as follows. The map $s_N$ sends each vertex of $L_{N-1}$ into the sum of his two neighbors in $L_N$. The map $f_N$ sends each vertex of $L_N$ into its neighbor in $L_{N-1}$. This assignment uniquely extends to an $\mathbb{F}_2$-linear map, which is clearly $\Omega_N$-linear from the way it is defined. 
\begin{proposition} \label{father-son sequence}
Let $N$ be a positive integer. The maps $s_N,f_N$ induce an exact sequence of $\mathbb{F}_2[\Omega_N]$-modules
$$0 \to \mathbb{F}_2[L_{N-1}] \to \mathbb{F}_2[L_N] \to \mathbb{F}_2[L_{N-1}] \to 0.
$$
Furthermore $s_N(\mathbb{F}_2[L_{N-1}])=I_{\Omega_{N}}^{2^{N-1}} \cdot \mathbb{F}_2[L_N]$.
\end{proposition}
\begin{proof}
Checking the exactness of the sequence is elementary; we shall prove the second part of the statement. 

Observe that $I_{\Omega_{N-1}}^{2^{N-1}}\cdot\mathbb{F}_2[L_{N-1}]=0$ because $\text{dim}_{\mathbb{F}_2}(\mathbb{F}_2^{L_{N-1}})=2^{N-1}$ and the augmentation ideal is nilpotent. This already implies $s_N(\mathbb{F}_2[L_{N-1}]) \supseteq I_{\Omega_N}^{2^{N-1}} \cdot \mathbb{F}_2[L_N]$ by the exactness of the sequence. On the other hand we have that a basis of $s_N(\mathbb{F}_2[L_{N-1}])$ is given by the set
$$\{xw+yw\}_{w \in \mathbb{F}_2[L_{N-1}]} \subseteq \mathbb{F}_2[L_N].
$$
Recall that $\Omega_{N}$ is the $2$-Sylow of $\text{Sym}(L_N)$. As such it is equipped with at least one cycle $\rho$ of order $2^N$. Combining the fact that the group generated by $\rho$ acts simply transitively on $L_N$, along with the fact that $\rho^{2^{N-1}}$ acts trivially on $L_{N-1}$, we see that $\rho^{2^{N-1}}(xw)=yw$ and $\rho^{2^{N-1}}(yw)=xw$ for each $w \in L_{N-1}$. Therefore
$$xw+yw=(\rho^{2^{N-1}}+\text{Id})(xw)=(\rho+\text{Id})^{2^{N-1}}(xw),
$$
which gives precisely the desired conclusion, since $\rho+\text{Id} \in I_{\Omega_{N}}$. 
\end{proof}

Thanks to Proposition \ref{father-son sequence} we see that essentially the filtration $I_{\Omega_{N}}^{\bullet} \cdot \mathbb{F}_2[L_N]$ consists in repeating two times the filtration $I_{\Omega_{N-1}}^{\bullet} \cdot \mathbb{F}_2[L_{N-1}]$. This has two important consequences.

The first one is that it puts the basis for a recursive description of $I_{\Omega_N}^{i} \cdot \mathbb{F}_2[L_N]$. It is already clear from Proposition \ref{father-son sequence} that the resulting criterion, which we next explain, will depend in a natural manner on the base $2$-expansion of $i$. To this end it will be convenient to pause and review the space $\mathbb{F}[L_N]$ as the space of functions
$$\text{Fun}(L_N,\mathbb{F}_2),
$$
where each word $w$ in $L_N$ is identified with the delta-function $\delta_w$ that vanishes at all words $w'$ in $L_N$ except from $w$. In this manner, we have that the equality of Proposition \ref{father-son sequence}
$$I_{\Omega_N}^{2^{N-1}}\cdot \mathbb{F}_2[L_N]=s_N(\mathbb{F}_2[L_{N-1}])
$$
tells us that $I_{\Omega_N}^{2^{N-1}}\cdot \mathbb{F}_2[L_N]$ consists precisely of those functions $f \in \text{Fun}(L_N,\mathbb{F}_2)$ that do not depend on the value of the first symbol in the word, more formally
$$I_{\Omega_N}^{2^{N-1}}\cdot \mathbb{F}_2[L_N]=\{f:L_N \to \mathbb{F}_2: f(xw)=f(yw), \ \text{for all} \ w\in L_{N-1}\}.
$$
Of course this can be naturally identified with $\text{Fun}(L_{N-1},\mathbb{F}_2)$, and the quotient map is precisely the map $f_N$ sending a function $f$ into the sum on any pair of descendants. This is a function-theoretic reinterpretation of the maps $s_N$ and $f_N$.

It will now be convenient to identify each word $w$ in $L_N$ with a vector $v(w)$ in $\mathbb{F}_2^N$, following the insight of \cite{kaloujnine}. If $x$ appears in position $k$ from left to right of $w$, then we place $0$ in position $N+1-k$ in the vector $v(w)$, while if $y$ appears in position $k$ we place a $1$ in position $N+1-k$. Of course this gives a bijection between $L_N$ and $\mathbb{F}_2^{N}$ and we now wish to translate our description of Proposition \ref{father-son sequence} under this bijection. To this end, let $x_1, \ldots, x_N$ be the coordinate projection functions from $\mathbb{F}_2^N$ to $\mathbb{F}_2$ and recall that any function $f\colon \mathbb{F}_2^N \to \mathbb{F}_2$ can be realized as a polynomial map in $x_1, \ldots,x_N$. More precisely, the natural map
$$\frac{\mathbb{F}_2[x_1, \ldots,x_N]}{(x_1^2-x_1,\ldots,x_N^2-x_N)} \to \text{Fun}(\mathbb{F}_2^N,\mathbb{F}_2)
$$
$$p(x_1,\ldots,x_N)\mapsto (
   (y_0,\ldots,y_N)\mapsto p(y_1,\ldots,y_n))$$
is a ring isomorphism. An $\F_2$-basis for $\frac{\mathbb{F}_2[x_1, \ldots,x_N]}{(x_1^2-x_1,\ldots,x_N^2-x_N)}$ consists of
$$\{x_T\}_{T \subseteq \{1,\ldots,N\}},
$$
where 
$$x_T\coloneqq\prod_{i \in T}x_i.
$$
We are going to describe the spaces $I_{\Omega_N}^i\cdot \mathbb{F}_2[L_N]$, through Proposition \ref{father-son sequence}, in this space of polynomials. Before delving in the auxiliary important notion of \emph{height} of a polynomial and into a formal argument, we give below the essential gist of the argument.  

\emph{Informal overview:} As observed above, the space $s_N(\mathbb{F}_2[L_{N-1}])=I_{\Omega_N}^{2^{N-1}}\cdot \mathbb{F}_2[L_N]$ is nothing else than the space of polynomials that do not depend on the last variable $x_N$, i.e.\ $\frac{\mathbb{F}_2[x_1, \ldots,x_{N-1}]}{(x_1^2-x_1,\ldots,x_{N-1}^2-x_{N-1})}$. Proposition \ref{father-son sequence} says that the filtration past this point is precisely a repetition of the filtration obtained on polynomials in $N-1$ variables. If we quotient out by this space we get nothing else than another copy of $\frac{\mathbb{F}_2[x_1, \ldots,x_{N-1}]}{(x_1^2-x_1,\ldots,x_{N-1}^2-x_{N-1})}$, namely $x_N \cdot \frac{\mathbb{F}_2[x_1, \ldots,x_{N-1}]}{(x_1^2-x_1,\ldots,x_{N-1}^2-x_{N-1})}$. Proposition \ref{father-son sequence} says precisely that the filtration previous level $2^{N-1}$ is a repetition of the filtration obtained on polynomials in $N-1$ variables. This ends our informal overview (which contains the main idea of the proof of Proposition \ref{Explicit description for desc. centr. series of omegainfty}), and suggests the following definition (see also \cite{kaloujnine}). 

\begin{definition} \label{def:height polynomials}
Let $T$ be a subset of $\{1, \ldots, N\}$. We define the \emph{height} of $x_T$ to be
$$h(x_T)\coloneqq\sum_{h \in T}2^{h-1}.
$$
In this way the height of the monomials cover each number between $0$ (attained at $1=x_{\emptyset}$) and $2^N-1$ (attained at $x_{\{1,\ldots,N\}}$) precisely once. We define the height $h(f)$ of any polynomial $f \in \frac{\mathbb{F}_2[x_1, \ldots,x_{N}]}{(x_1^2-x_1,\ldots,x_{N}^2-x_{N})},$ decomposed uniquely as
$$f\coloneqq\sum_{T \subseteq \{1, \ldots,N\}}\lambda_Tx_T,
$$
to be the largest height of a monomial $x_T$ with $\lambda_T=1$, if $f$ is non-zero, and we put $h(0)=0$.  

We put for each $i$ between $0$ and $2^N$:
$$V_i(N)\coloneqq \left\{f \in \frac{\mathbb{F}_2[x_1, \ldots,x_{N}]}{(x_1^2-x_1,\ldots,x_{N}^2-x_{N})}\colon h(f) \leq 2^N-1-i\right\}.
$$
\end{definition}
We are now ready to give the desired description. 
\begin{proposition}\label{Explicit description for desc. centr. series of omegainfty}
For every $i \in \{0,\ldots,2^N-1\}$ we have that under the above identifications
$$V_i(N)=I_{\Omega_{N}}^i \cdot \mathbb{F}_2[L_N].
$$
\begin{proof}
We proceed by induction on $N$. For $N=0$ the claim is trivial. Let now $N$ be a positive integer and suppose to have the desired conclusion for $N-1$. Let us distinguish two cases.

\emph{Case $1$:} Suppose first that $i \geq 2^{N-1}$. Then Proposition \ref{father-son sequence} tells us that
$$I_{\Omega_{N}}^i \cdot \mathbb{F}_2[L_N]=s_N(I_{\Omega_{N}}^{i-2^{N-1}} \cdot \mathbb{F}_2[L_{N-1}]).$$
However the map $s_N$ at the level of polynomials is simply the inclusion map of $\frac{\mathbb{F}_2[x_1, \ldots,x_{N-1}]}{(x_1^2-x_1,\ldots,x_{N-1}^2-x_{N-1})}$ in $\frac{\mathbb{F}_2[x_1, \ldots,x_{N}]}{(x_1^2-x_1,\ldots,x_{N}^2-x_{N})}$. Hence we get by induction that $I_{\Omega_{N}}^i \cdot \mathbb{F}_2[L_N]$ is the space of polynomials involving only the first $N-1$ variables and of height at most $2^{N-1}-1-(i-2^{N-1})=2^N-1-i$, which is precisely the space $V_i(N)$, as desired (notice that the fact that $x_N$ does not appear is the empty condition, once the height is strictly bounded by $2^N-1$).

\emph{Case $2$:} Suppose now that $i<2^{N-1}$. Then Proposition \ref{father-son sequence} implies that
\begin{equation}\label{f_s}
    I_{\Omega_{N}}^i \cdot \mathbb{F}_2[L_N]=f_N^{-1}(I_{\Omega_{N-1}}^i\cdot \mathbb{F}_2[L_{N-1}]).
\end{equation}

However at the level of polynomials, if $S \subseteq \frac{\mathbb{F}_2[x_1, \ldots,x_{N-1}]}{(x_1^2-x_1,\ldots,x_{N-1}^2-x_{N-1})}$ the map $f_N^{-1}$ sends $S$ into
$$\frac{\mathbb{F}_2[x_1, \ldots,x_{N-1}]}{(x_1^2-x_1,\ldots,x_{N-1}^2-x_{N-1})} \oplus x_N \cdot S.
$$
Since by the inductive assumption we have that $V_i(N-1)=I_{\Omega_{N-1}}^i\cdot \F_2^{L_{N-1}}$, equation \eqref{f_s} implies that
$$I_{\Omega_N}^i\cdot \F_2^{L_N}=f_N^{-1}(V_i(N-1))=\frac{\mathbb{F}_2[x_1, \ldots,x_{N-1}]}{(x_1^2-x_1,\ldots,x_{N-1}^2-x_{N-1})} \oplus x_N \cdot V_i(N-1).
$$
Hence we get that $I_{\Omega_{N}}^i \cdot \mathbb{F}_2[L_N]$ consists of the set of polynomials of height at most $2^{N-1}+(2^{N-1}-i-1)=2^N-i-1$, which is precisely $V_i(N)$, as desired. 
\end{proof}
\end{proposition}
Combining Corollary \ref{reducing descending central to augmentational} and Proposition \ref{Explicit description for desc. centr. series of omegainfty} we obtain the following corollary, which reproves the main result of \cite{kaloujnine}.

\begin{corollary} \label{total explicit description}
Let $i$ be a non-negative integer. We have that

$$\Omega_{\infty}^{(i)}=\{ (\sigma_N)_{N \in \mathbb{Z}_{\geq 0}}: \sigma_N \in V_i(N),\,\, \forall N\geq 0\}.$$
\end{corollary}

In particular, we deduce the following fact.
\begin{corollary} \label{at powers of 2-1}
Let $h$ be a non-negative integer. Then 
$$\Omega_{\infty}^{(2^h-1)}\cong(\Omega_{\infty})_{\textup{fib}}^{2^h}.
$$

\begin{proof}
For $h=0$ the claim is a tautology since it equates a group with itself. Suppose now that $h>0$. Observe that $I_{\Omega_h}^{2^h-1}\cdot \mathbb{F}_2[L_h]$ consists of the $1$-dimensional space generated by 
$$\sum_{w \in L_h}w.
$$
In fact, by Proposition \ref{Explicit description for desc. centr. series of omegainfty} this is the space of polynomials of height $2^h-1-(2^h-1)=0$. These are precisely the constant polynomials, hence the constant functions, which is precisely the sum above and $0$. 

Let now $N\ge h$ be an integer. To describe $I_{\Omega_N}^{2^h-1} \cdot \mathbb{F}_2[L_N]$, we can apply 
$$f_{h+1} \circ \ldots \circ f_N,
$$
and have that
$$I_{\Omega_N}^{2^h-1} \cdot \mathbb{F}_2[L_N]=(f_{h+1} \circ \ldots \circ f_N)^{-1}V_{2^h-1}(h)=$$
$$=(f_{h+1} \circ \ldots \circ f_N)^{-1}\left(\mathbb{F}_2\cdot \left(\sum_{w \in L_h}w\right)\right).
$$
In other words, $I_{\Omega_N}^{2^h-1}\cdot \mathbb{F}_2[L_N]$ consists precisely of the functions $f: L_N \to \mathbb{F}_2$ such that the induced function $L_h \to \mathbb{F}_2$ given by
$$w' \mapsto \sum_{w_0 \in L_{N-h}}f(w_0w')
$$
is constant. A moment of reflection shows that therefore:
$$\{(f_n)_{n\in \Z_{\ge 0}}\in \Omega_\infty\colon f_n\in I_{\Omega_n}^{2^h-1}\cdot \F_2[L_n] \mbox{ for every } n\}\cong(\Omega_\infty)^{2^h}_{\text{fib}},$$
and invoking Corollary \ref{reducing descending central to augmentational}, we get the claim
\end{proof}
\end{corollary}
We now turn to the second, also well-known, consequence of Proposition \ref{father-son sequence}, which is the \emph{uni-seriality} of the module $\mathbb{F}_2[L_N]$ over the ring $\mathbb{F}_2[\Omega_N]$, see for instance \cite{grigorchuk}. In a rather precise sense, it says that $\mathbb{F}_2[L_N]$ as $\mathbb{F}_2[\Omega_N]$-module, behaves like a cyclic module over a DVR.

\begin{definition} \label{defweight}
Let $N$ be a non-negative integer and let $x$ be a non-zero element of $\mathbb{F}_2[L_N]$. We define the \emph{weight} of $x$, denoted with $w(x)$, to be the largest non-negative integer $n$ such that $x$ belongs to $I_{\Omega_N}^n\cdot \mathbb{F}_2[L_N]$.     
\end{definition}

The following proposition proves the aforementioned uni-seriality. Recall that a \emph{maximal cycle} is an element $\rho\in \Omega_\infty$ with $\phi_i(\rho)=1$ for every $i\ge 0$. 

\begin{proposition} \label{Uniseriality}
The following hold true.
\begin{enumerate}[(a)]
    \item The filtration
$$\mathbb{F}_2[L_N] \supseteq I_{\Omega_N} \cdot\mathbb{F}_2[L_N] \supseteq I_{\Omega_N}^2 \cdot \mathbb{F}_2[L_N] \supseteq \ldots\supseteq I_{\Omega_N}^{2^{N}} \cdot \mathbb{F}_2[L_N]=0,
$$
coincides with the filtration
$$\mathbb{F}_2[L_N] \supseteq (\rho-1) \cdot\mathbb{F}_2[L_N] \supseteq (\rho-1)^2 \cdot \mathbb{F}_2[L_N] \supseteq \ldots\supseteq (\rho-1)^{2^N} \cdot \mathbb{F}_2[L_N]=0,
$$
where $\rho$ is any maximal cycle.
 \item Every submodule of the filtration has index $2$ in the previous one.
 \item Every submodule of $\mathbb{F}_2[L_N]$ has the form $I_{\Omega_N}^i\cdot \F_2[L_N]$ for some $i\in \{0,\ldots,2^N\}$.
 \item For every non-zero $x \in \mathbb{F}_2[L_N]$, we have that
$$\mathbb{F}_2[\Omega_N]\cdot x=I_{\Omega_N}^{w(x)}\cdot \mathbb{F}_2[L_N].
$$
\end{enumerate}
\begin{proof}
To prove $(a)$, notice that if $\rho$ is a maximal cycle, we have that $\mathbb{F}_2[L_N]$ viewed as a $\mathbb{F}_2[\langle \rho \rangle]$-module is free of rank $1$, and isomorphic to $\frac{\mathbb{F}_2[\epsilon]}{\epsilon^{2^N}}$ as a module on the polynomial ring $\mathbb{F}_2[\epsilon]$, where the formal variable $\epsilon$ is sent into $\rho-1$. Since here also the successive submodules are also $2^N+1$, all of index $2$ in the previous one, and since the latter are contained in the former, the only possibility is that they coincide.

Part $(b)$ follows immediately from repeatedly applying Proposition \ref{father-son sequence}, which reduces the verification to $N=0$ and $i=0$, where it is trivial.

Let now $M$ be a non-trivial submodule of $\F_2[L_N]$. Let $x$ be an element of smallest possible weight in $M$, denote this weight by $j$. It follows that $x$ generates $\frac{I_{\Omega_N}^j\cdot \mathbb{F}_2[L_N]}{I_{\Omega_N}^{j+1}\cdot \mathbb{F}_2[L_N]}$. Hence from the non-commutative version of Nakayama's lemma we have that $x$ generates $I_{\Omega_N}^j\cdot \mathbb{F}_2[L_N]$, which is therefore contained in $M$. On the other hand $M$ is trivially contained in $I_{\Omega_N}^j\cdot \mathbb{F}_2[L_N]$, yielding part $(c)$. 

Part $(d)$ immediately follows from the proof of $(c)$, noticing that the module generated by $x$ contains only elements of weight larger equal than $w(x)$. 
\end{proof}
\end{proposition}

We are now ready to derive the following crucial theorem.

\begin{theorem} \label{derived series}
Let $i$ be a non-negative integer. Then
$$\Omega_{\infty}^{(i-\textup{Fr.})}=\Omega_{\infty}^{(2^i-1)}\cong(\Omega_{\infty})_{\textup{fib}}^{2^i}.
$$
\begin{proof}
The second isomorphism is just Corollary \ref{at powers of 2-1}.

To prove the first equality, start by noticing that we can safely assume that $i>0$, as otherwise the claim is a tautology. Let us fix a non-negative integer $N$ and apply Proposition \ref{derandcent in semidir} to the semidirect product
$$\Omega_{N+1}=\mathbb{F}_2[L_N] \rtimes \Omega_N,
$$
to conclude that 
$$\Omega_{N+1}^{(i-\text{Fr.})}=M \rtimes \Omega_N^{(i-\text{Fr.})},
$$
for $M$ an $\mathbb{F}_2[\Omega_N]$ submodule of $\mathbb{F}_2[L_N]$. Therefore in view of Proposition \ref{Uniseriality}, there exists an integer $h$ such that
$$M=I_{\Omega_N}^{h}\cdot \mathbb{F}_2[L_N].
$$
To conclude the proof, in view of Corollary \ref{total explicit description} it is enough to prove that $h=2^i-1$. Since the inclusion $G^{(i-\text{Fr.})} \subseteq G^{(2^i-1)}$ holds for any profinite group, we only need to prove that $h \leq 2^i-1$. To this end, observe that Proposition \ref{derandcent in semidir} immediately implies that $\Omega_{\infty}^{(h-\textup{Fr.})}$ is topologically generated by involutions for each positive integer $h$, since it is an iterated semidirect product of successive $\mathbb{F}_2$-vector spaces. Hence its abelianization must be a profinite $\mathbb{F}_2$-vector space. It follows that if $\rho$ in $\Omega_{\infty}$ is a maximal cycle, then $\rho^{2^h} \in \Omega_{\infty}^{(h-\text{Fr.})}
$ for every $h$ non-negative integer. Hence
$$(\rho^{2^{i-1}}-1)\cdot \ldots \cdot (\rho-1) \in I_{\Omega_N^{(i-1)-\text{Fr.}}}\cdot \ldots \cdot I_{\Omega_N},
$$
and invoking Proposition \ref{derandcent in semidir} we see that
$M \supseteq (\rho-1)^{2^{i-1}+ \ldots +1}\cdot \mathbb{F}_2[L_N]$. But this last module coincides with $I_{\Omega_N}^{2^i-1} \cdot \mathbb{F}_2[L_N]$, and the proof is complete.
\end{proof}
\end{theorem}

\subsection{Proof of Proposition \ref{prop2} and Proposition \ref{prop:generatedbyinvolutions}} \label{sbs:prop 2}
The previous subsection, by means of Corollary \ref{derived series} has successfully proved part $(a)$ of Proposition \ref{prop2} in the special case $\underline{a}=\underline{0}$. So let us fix $\underline{a}$ a non-zero element of $\mathbb{F}_2^{(\mathbb{Z}_{\geq 0})}$ and write
$$N\coloneqq i_{\text{max}}(\underline{a}),
$$
which is then a non-negative integer. Observe that if $N=0$, then Proposition \ref{prop2} part $(a)$ is a tautology, since it identifies a group with itself. On top of that, notice that for $N=0$ we have that 
$$M_{\underline{a}}\cong\Omega_{\infty}^2.
$$
Since the derived series of a square is the square of the derived series, we see that part $(b)$ is a trivial consequences of Corollary \ref{derived series}. Hence, we have established Proposition \ref{prop2} also in case $N=0$. In the rest of this subsection we assume therefore that $N>0$. We denote by
$$(M_{\underline{a}})_{N+1}\subseteq \Omega_{N+1}=\mathbb{F}_2[L_N] \rtimes \Omega_N,
$$
the image of $M_{\underline{a}}$ in $\Omega_{N+1}$. We begin with the following.  
\begin{proposition} \label{index at most 2 derived}
For each non-negative integer $i$, we have that
$$ [\Omega_{N+1}^{(i-\textup{Fr.})}:{(M_{\underline{a}})}_{N+1}^{(i-\textup{Fr.})}]\leq 2.
$$
\begin{proof}
For $i=0$ the statement is a tautology. So let us assume $i>0$. We divide the proof in steps.

$(1)$ Observe that $(M_{\underline{a}})_{N+1}$ projects surjectively onto $\Omega_N$. It follows that ${(M_{\underline{a}})}_{N+1}^{(i-\textup{Fr.})}$ projects surjectively onto $\Omega_N^{(i-\textup{Fr.})}$. As a consequence every element of the quotient 
$$\frac{\Omega_{N+1}^{(i-\textup{Fr.})}}{(M_{\underline{a}})_{N+1}^{(i-\textup{Fr.})}}
$$
has a representative in $\mathbb{F}_2[L_N] \rtimes \{\text{id}\}$.

$(2)$ Next observe that trivially $I_{\Omega_N}\cdot \mathbb{F}_2[L_N] \rtimes \{\text{id}\} \subseteq (M_{\underline{a}})_{N+1}$. Pick for each non-negative integer $j<i$ an element $\rho_j$ of $(M_{\underline{a}})_{N+1}^{(j-\text{Fr.})}$ whose image in $\Omega_N$ coincides with $\rho^{2^j}$, where $\rho$ is a fixed maximal cycle of $\Omega_N$ (as noticed in the proof of Theorem \ref{derived series}, this is an element of $(\Omega_N)^{(j-\textup{Fr.})}$). Observe that therefore the elements of
\begin{equation}\label{iter_augm}
    (\rho_{i-1}-1) \ldots (\rho_0-1) \cdot I_{\Omega_N} \cdot \mathbb{F}_2[L_N] \rtimes \{\text{id}\},
\end{equation}
are naturally elements of $(M_{\underline{a}})_{N+1}^{(i-\text{Fr.})}$. But the above product only depends on the image of the elements in $\Omega_N$, and hence \eqref{iter_augm} is nothing else than
$$(\rho-1)^{2^i-1}\cdot I_{\Omega_N}\cdot \mathbb{F}_2[L_N] \rtimes \{\text{id}\}.
$$
Thanks to Proposition \ref{Uniseriality}, applied back and forth twice, this last module can be rewritten as
$$(\rho-1)^{2^i-1}I_{\Omega_N}\cdot \mathbb{F}_2[L_N]\rtimes \{\text{id}\}=I_{\Omega_N}^{2^i}\cdot \mathbb{F}_2[L_N]\rtimes \{\text{id}\},
$$
which is therefore a subgroup of $(M_{\underline{a}})_{N+1}^{(i-\text{Fr.})}$.

$(3)$ We know that $\Omega_{N+1}^{(i-\textup{Fr.})} \cap (\mathbb{F}_2[L_N] \rtimes \{\text{id}\})=I_{\Omega_N}^{2^i-1}\cdot \mathbb{F}_2[L_N] \rtimes \{\text{id}\}$, thanks to Theorem \ref{derived series} combined with Proposition \ref{derandcent in semidir}.

$(4)$ Thanks to step $(1)$ we can represent any element of $\frac{\Omega_{N+1}^{(i-\textup{Fr.})}}{(M_{\underline{a}})_{N+1}^{(i-\textup{Fr.})}}$ as an element of $\mathbb{F}_2[L_N] \rtimes \{\text{id}\}$. Thanks to step $(2)$, we have that
$$(\mathbb{F}_2[L_N] \rtimes \{\text{id}\}) \cap (M_{\underline{a}})_{N+1}^{(i-\text{Fr.})} \supseteq I_{\Omega_N}^{2^i}\cdot \mathbb{F}_2[L_N] \rtimes \{\text{id}\},
$$
which thanks to step $(3)$ combined with Proposition \ref{Uniseriality} has index $2$ in
$$(\mathbb{F}_2[L_N] \rtimes \{\text{id}\}) \cap (\Omega_{N+1})^{(i-\text{Fr.})} = I_{\Omega_N}^{2^i-1}\cdot \mathbb{F}_2[L_N] \rtimes \{\text{id}\}.
$$
Hence the quotient $\frac{\Omega_{N+1}^{(i-\textup{Fr.})}}{(M_{\underline{a}})_{N+1}^{(i-\textup{Fr.})}}$ admits a surjection from 
$$\frac{I_{\Omega_N}^{2^i-1}\cdot \mathbb{F}_2[L_N]}{I_{\Omega_N}^{2^i}\cdot \mathbb{F}_2[L_N]},
$$
which is of size $2$ by Proposition \ref{Uniseriality}. Therefore the index is at most $2$, as desired. 
\end{proof}
\end{proposition}

We next show the following inclusion. 
\begin{proposition} \label{also at least 2}
Let $j$ be a non-negative integer. Then
$$(M_{\underline{a}})^{(j-\textup{Fr.})} \subseteq (M_{\textup{sh}_j(\underline{a})})_{\textup{fib}}^{2^j}.
$$
\begin{proof}
We proceed by induction on $j$. For $j=0$ the statement is a tautology. Suppose that we have a non-negative integer $j$ such that  
$$(M_{\underline{a}})^{(j-\textup{Fr.})} \subseteq (M_{\textup{sh}_j(\underline{a})})_{\textup{fib}}^{2^j} \subseteq (\Omega_{\infty})_{\textup{fib}}^{2^j}
$$
It follows that 
$$(M_{\underline{a}})^{((j+1)-\textup{Fr.})} \subseteq [(M_{\textup{sh}_j(\underline{a})})_{\textup{fib}}^{2^j},(M_{\textup{sh}_j(\underline{a})})_{\textup{fib}}^{2^j}] \subseteq [(\Omega_{\infty})_{\textup{fib}}^{2^j},(\Omega_{\infty})_{\textup{fib}}^{2^j}]=(\Omega_{\infty})_{\textup{fib}}^{2^{j+1}}.
$$
On the other hand, trivially, 
$$[(M_{\textup{sh}_j(\underline{a})})_{\textup{fib}}^{2^j},(M_{\textup{sh}_j(\underline{a})})_{\textup{fib}}^{2^j}]\subseteq ([M_{\textup{sh}_j(\underline{a})},M_{\textup{sh}_j(\underline{a})}])^{2^{j}}.
$$
It follows from Theorem \ref{Describing abelianized Ma} that 
$$[M_{\textup{sh}_j(\underline{a})},M_{\textup{sh}_j(\underline{a})}] \subseteq (M_{\text{sh}_{j+1}(\underline{a})})_{\text{fib}}^2.
$$
Hence we obtain the desired conclusion, namely that 
$$(M_{\underline{a}})^{((j+1)-\textup{Fr.})} \subseteq ((M_{\text{sh}_{j+1}(\underline{a})})_{\text{fib}}^2)^{2^j} \cap (\Omega_{\infty})_{\text{fib}}^{2^{j+1}}=(M_{\textup{sh}_{j+1}(\underline{a})})_{\textup{fib}}^{2^{j+1}},$$

where equality holds because the first set in the intersection guarantees that each of the $2^{j+1}$ coordinates has to satisfy the relation imposed by $\text{sh}_{j+1}(\underline{a})$, while the second set guarantees that the resulting vector of $2^{j+1}$ elements has agreeing values of $\phi_i$, for every non-negative integer $i$. This is precisely the definition of $(M_{\textup{sh}_{j+1}(\underline{a})})_{\textup{fib}}^{2^{j+1}}$.
\end{proof}
\end{proposition}
As an immediate consequence, we can deduce a level $N+1$ version of Proposition \ref{prop2}.
\begin{proposition} \label{prop2 at level N}
 Let $j\leq N$ be a non-negative integer. Then
$$(M_{\underline{a}})_{N+1}^{(j-\textup{Fr.})}=\pi_{\Omega_{\infty} \to \Omega_{N+1}}((M_{\textup{sh}_j(\underline{a})})_{\textup{fib}}^{2^j}).
$$   
\begin{proof}
Observe that $\text{sh}_j(\underline{a})$ is not zero for each $j \leq N$. This certainly implies that $(M_{\textup{sh}_j(\underline{a})})_{\textup{fib}}^{2^j}$ is an index $2$ subgroup of $(\Omega_{\infty})_{\text{fib}}^{2^j}$. Furthermore we have the same conclusion when projecting at level $N+1$, since the constraint given by belonging to $(M_{\textup{sh}_j(\underline{a})})_{\textup{fib}}^{2^j})$ concerns still only the first $N+1$ entries of the digital representation, when we view $(\Omega_{\infty})_{\text{fib}}^{2^j}$ as a subgroup of $\Omega_{\infty}$ (more precisely it constraints only digits between level $j$ and level $N+1$). In other words we have that $\pi_{\Omega_{\infty} \to \Omega_{N+1}}((M_{\textup{sh}_j(\underline{a})})_{\textup{fib}}^{2^j}))$ is an index $2$ subgroup of $\pi_{\Omega_{\infty} \to \Omega_{N+1}}((\Omega_{\infty})_{\text{fib}}^{2^j})$. Hence combining Theorem \ref{derived series}, Proposition \ref{index at most 2 derived}, and Proposition \ref{also at least 2} we obtain immediately the desired conclusion.      
\end{proof}
\end{proposition}
We now gain control on what happens after level $N+1$, for $j \leq N$. Let us first recall the following basic fact. 
\begin{proposition} \label{augnormal gives submodule}
Let $G$ be a group, $U$ be a normal subgroup, and $M$ be a $\mathbb{F}_2[G]$-module. Then $I_U\cdot M$ is a $\mathbb{F}_2[G]$-sub-module. 
\begin{proof}
Indeed we have that
$$g(u-1)m=gug^{-1}gm-gm=(gug^{-1}-1)gm,
$$
where the last term is still in $I_U \cdot M$, owing to the fact that $U$ is normal. 
\end{proof}
\end{proposition}
\begin{proposition} \label{control at infinite level}
 Let $j\leq N$ be a non-negative integer. Then
$$(M_{\underline{a}})^{(j-\textup{Fr.})} \cap \textup{ker}(\Omega_{\infty} \to \Omega_{N+1})=\Omega_{\infty}^{(j-\textup{Fr.})} \cap \textup{ker}(\Omega_{\infty} \to \Omega_{N+1}).
$$
\begin{proof}
Let $N_1$ be an integer strictly larger than $N+1$. Write $N_2\coloneqq N_1-1$ and consider
$$\Omega_{N_1}\coloneqq\mathbb{F}_2[L_{N_2}] \rtimes \Omega_{N_2}.
$$
For a non-negative integer $h$, write $(M_{\underline{a}})_{h}$ for the image of $M_{\underline{a}}$ in $\Omega_{h}$. Since $N_1>N+1$ we have that
$$(M_{\underline{a}})_{N_1}=\mathbb{F}_2[L_{N_2}] \rtimes (M_{\underline{a}})_{N_2}.
$$
Keeping in mind that $M_{\underline{a}}$ is normal in $\Omega_{\infty}$, we obtain by Proposition \ref{derandcent in semidir} combined with Proposition \ref{augnormal gives submodule} that
$$(M_{\underline{a}})_{N_1}^{(j-\text{Fr.})}=M \rtimes ((M_{\underline{a}})_{N_2})^{(j-\text{Fr.})},
$$
where $M$ is an $\mathbb{F}_2[\Omega_{N_2}]$-submodule of $\mathbb{F}_2[L_{N_2}]$. Therefore by Proposition \ref{Uniseriality} we conclude that there must be a non-negative integer $k_0$ such that
$$M=I_{\Omega_{N_2}}^{k_0} \cdot \mathbb{F}_2[L_{N_2}].
$$
Notice that if we can prove that $k_0=2^j-1$, then the proof is concluded thanks to Theorem \ref{derived series} and the fact that $N_2$ is an arbitrary integer $\ge N+1$.

Obviously we have that
$$(M_{\underline{a}})_{N_1}^{(j-\text{Fr.})} \subseteq (\Omega_{N_1})^{(j-\text{Fr.})}. 
$$
Therefore, in virtue of Proposition \ref{derived series}, we obtain that $k_0 \geq 2^{j}-1$. Hence, thanks to Proposition \ref{Uniseriality}, in case we are able to produce an element of $M$ of weight $2^j-1$, then we conclude that $k_0=2^j-1$. Pick any element $v$ of $\mathbb{F}_2[L_{N_2}]$ of weight $0$. Observe that $M_{\underline{a}}$ surjects onto $\Omega_{j}$. Therefore for each non-negative integer $h$ we have that $M_{\underline{a}}^{(h-\text{Fr.})}$ surjects onto $\Omega_{j}^{(h-\text{Fr.})}$. In particular we can produce 
$$\rho(h) \in M_{\underline{a}}^{(h-\text{Fr.})},
$$
which maps to $\rho^{2^h}\in \Omega_{j}^{(h-\text{Fr.})}$, where $\rho$ is a maximal cycle of $\Omega_{j-1}$. Thanks to Proposition \ref{derandcent in semidir} we have that
$$v'\coloneqq (\rho(j-1)-1) \ldots (\rho(0)-1)\cdot v\in M.
$$
We claim that $w(v')=2^j-1$. Thanks to Proposition \ref{father-son sequence}, it suffices to show that the image of $v'$ in $\mathbb{F}_2[L_j]$ under the chain of maps $f_{j+1} \circ \ldots \circ f_{N_2}$ has the same weight $2^j-1$. On the other hand
$$f_{j+1} \circ \ldots \circ f_{N_2} (v')=(\rho-1)^{2^{j-1}}\cdot \ldots \cdot (\rho-1) \cdot v=(\rho-1)^{2^j-1} \cdot v
$$
and the right hand side has weight exactly equal to $2^j-1$, in virtue of Proposition \ref{Uniseriality}. Hence we have that $k_0=2^j-1$.
\end{proof}
\end{proposition}

We are now ready for the proof of Proposition \ref{prop2} part $(a)$.

\textbf{Proof of Proposition \ref{prop2} part $(a)$:} 
\begin{proof} The combination of Proposition \ref{prop2 at level N} and Proposition \ref{control at infinite level} gives us that the two sides of the isomorphism of Proposition \ref{prop2}, viewed as closed subgroups of $\Omega_{\infty}$, coincide at every finite level. Being closed, this implies that they coincide, as desired. 
\end{proof}
We now focus on part $(b)$. As we will see, the following special case of Proposition \ref{prop2} part $(b)$ contains all the difficulty. 
\begin{proposition} \label{the case N+1}
We have that  
$$(M_{\underline{a}})^{((N+1)-\textup{Fr.})}=((\Omega_{\infty})_{\textup{fib}}^2)^{2^{N+1}}.   
$$
\begin{proof}
Thanks to Proposition \ref{prop2} part $(a)$, which we have established above, we know that  
$$\mathcal{G}\coloneqq(M_{\underline{a}})^{(N-\textup{Fr.})}=(M_{(1,0,\ldots,0,\ldots)})^{2^N}_{\textup{fib}}=(\Omega_{\infty})_{\textup{fib}}^{2^{N}} \cap \text{ker}(\Omega_{\infty} \to \Omega_{N+1}).  
$$
By Theorem \ref{derived series} and Corollary \ref{reducing descending central to augmentational}, in terms of iterated semi-direct products the latter is the subgroup of elements $(\sigma_h)_{h \geq 0}$, where $\sigma_h=0$ for every $h \leq N$, and $\sigma_h \in I_{\Omega_h}^{2^N-1}\cdot \mathbb{F}_2[L_h]$, for every $h>N$. Therefore we can compute its commutator subgroup by means of Proposition \ref{derandcent in semidir}. To ease this computation, for each integer $s \geq N+2$, let us define $\mathcal{G}_s$ to be the image of $\mathcal{G}$ in $\Omega_s$. Then we have that
$${\mathcal{G}}_{s+1}=I_{\Omega_s}^{2^{N}-1} \cdot \mathbb{F}_2[L_s] \rtimes \mathcal{G}_s,
$$
and we can reconstruct $\mathcal{G}$ by taking the inverse limit along $s$ of all these semi-direct products. Let us now focus on computing the commutator subgroup of each $\mathcal{G}_{s+1}$. By Proposition \ref{derandcent in semidir} we have
$$[{\mathcal{G}}_{s+1},{\mathcal{G}}_{s+1}]=I_{\mathcal{G}_s} \cdot I_{\Omega_s}^{2^{N}-1} \cdot \mathbb{F}_2[L_s] \rtimes [\mathcal{G}_s,\mathcal{G}_s].
$$
Now given the fact that $\mathcal G_s$ is trivial whenever $s\leq N$, and in view of Theorem \ref{derived series} and Corollary \ref{reducing descending central to augmentational}, in order to prove the claim it is enough to show that
\begin{equation}\label{shape of augmentation}
    I_{\mathcal{G}_s} \cdot I_{\Omega_s}^{2^{N}-1} \cdot \mathbb{F}_2[L_s]= I_{\Omega_s}^{2^{N+1}}\mathbb{F}_2[L_s].
\end{equation}
Let us start by showing that
\begin{equation}\label{inclusion1}
    I_{\mathcal{G}_s} \cdot \mathbb{F}_2[L_s] \subseteq I_{\Omega_s}^{2^{N+1}}\mathbb{F}_2[L_s],
\end{equation}
which certainly implies that 
$$I_{\mathcal{G}_s} \cdot I_{\Omega_s}^{2^{N}-1} \cdot \mathbb{F}_2[L_s] \subseteq I_{\Omega_s}^{2^{N+1}}\mathbb{F}_2[L_s].
$$
(Notice that \eqref{shape of augmentation} and \eqref{inclusion1} do not contradict each other, since the order of multiplication matters, and one cannot swap the ideals and get the bigger power $2^{N+1}+2^N-1$ from $I_{\mathcal{G}_s} \cdot \mathbb{F}_2[L_s] \subseteq I_{\Omega_s}^{2^{N+1}}\mathbb{F}_2[L_s]$).

Let us now prove \eqref{inclusion1}. Thanks to Proposition \ref{father-son sequence}, it suffices to show the claim for $s=N+2$. The same proposition shows that $\mathbb{F}_2[L_{N+2}]$ is an extension of $\mathbb{F}_2[L_{N+1}]$ with itself as $\mathbb{F}_2[\Omega_{N+1}]$-module, through the maps $s_{N+2}$ and $f_{N+2}$. But by assumption $\mathcal{G}$ acts trivially on $L_{N+1}$. It follows immediately that
$$I_{\mathcal{G}_{N+2}} \cdot \mathbb{F}_2[L_{N+2}] \subseteq s_{N+2}(\F_2[L_{N+1}])= I_{\Omega_{N+2}}^{2^{N+1}}\mathbb{F}_2[L_{N+2}],
$$
where the last equality is a third application of Proposition \ref{father-son sequence}. This settles \eqref{inclusion1}.

Now let
$$M\coloneqq I_{\mathcal{G}_s} \cdot I_{\Omega_s}^{2^{N}-1} \cdot \mathbb{F}_2[L_s].
$$
To conclude the proof, we need to show that 
\begin{equation}\label{inclusion2}
    I_{\Omega_s}^{2^{N+1}}\F_2[L_s]\subseteq M.
\end{equation}
Since $\mathcal{G}$ is certainly normal in $\Omega_{\infty}$, it follows from Proposition \ref{augnormal gives submodule} that $M$ is a $\mathbb{F}_2[\Omega_s]$-sub-module, and  it follows from Proposition \ref{Uniseriality} that there exists a non-negative integer $k_0$ such that 
$$M=I_{\Omega_s}^{k_0}\cdot \mathbb{F}_2[L_s].
$$
We have already shown that $k_0 \geq 2^{N+1}$. We will now show that this is an exact equality. To this end it suffices to exhibit precisely one element of weight $2^{N+1}$ in $M$. Thanks to Proposition \ref{father-son sequence} it suffices to produce $m_0$ in $I_{\Omega_s}^{2^{N}-1} \cdot \mathbb{F}_2[L_{N+2}]$ and $g$ in $\mathcal{G}$ such that
$$w((g-1)m_0)=2^{N+1}. 
$$
Indeed Proposition \ref{father-son sequence} implies that if we pick $\widetilde{m}_0$ in $\mathbb{F}_2[L_s]$ that projects on $m_0$ through the natural surjection coming from iterating $f_h$'s, we have that also $w((g-1)\widetilde{m}_0)=2^{N+1}$. 
For that purpose consider in
$$I_{\Omega_{N+2}}^{2^{N}-1} \cdot \mathbb{F}_2[L_{N+2}] \rtimes I_{\Omega_{N+1}}^{2^{N}-1} \cdot \mathbb{F}_2[L_{N+1}]
$$
the element $g\coloneqq(0,x_2)$, where $x_2$ equals $s_{N+1}(v)$ for a node $v$ in $L_{N}$, and $m_0=(x_1,0)$, where $x_1$ consists of any sum of $2^N$ elements in $\mathbb{F}_2[L_{N+2}]$ in such a way that every node in $L_N$ has precisely one descendant in the sum. Observe that $x_2$ is in the image of the map $s_{N+1}$ and hence even in $I_{\Omega_{N+2}}^{2^{N}} \cdot \mathbb{F}_2[L_{N+1}] $ thanks to Proposition \ref{father-son sequence}. On the other hand, by design, the element $x_1$ maps to the sum of all elements of $L_N$ through the map $f_{N+1}\circ f_{N+2}$: that is precisely the unique non-trivial element of weight $2^N-1$ in $\mathbb{F}_2[L_N]$. Therefore Proposition \ref{father-son sequence} implies that $w(x_1)=2^N-1$. Now we have that
$$(g-1)m_0=s_{N+2}(v')
$$
where $v'$ is the unique node appearing in $x_2$ that is above a node appearing in $x_1$. Since $(g-1)m_0$ is the value of the map $s_{N+2}$ applied to an element of $L_{N+2}$ (which is an element of weight $0$), it follows from Proposition \ref{father-son sequence} that the image has weight exactly equal to $2^{N+1}$, as desired. This settles \eqref{inclusion2}, and in turn \eqref{shape of augmentation}.
\end{proof}
\end{proposition}

We are now ready to give a proof of Proposition \ref{prop2} part $(b)$.

\textbf{Proof of Proposition \ref{prop2} part $(b)$:} 
\begin{proof} From Proposition \ref{the case N+1} we know that 
$$(M_{\underline{a}})^{((N+1)-\textup{Fr.})}=((\Omega_{\infty})_{\textup{fib}}^2)^{2^{N+1}}.
$$
It follows that for any $s$ non-negative integer
$$(M_{\underline{a}})^{((N+1+s)-\textup{Fr.})}=((M_{\underline{a}})^{((N+1)-\textup{Fr.})})^{(s-\text{Fr.})}=(((\Omega_{\infty})_{\textup{fib}}^2)^{2^{N+1}})^{(s-\text{Fr.})}=$$
$$=(((\Omega_{\infty})_{\textup{fib}}^2)^{(s-\text{Fr.})})^{2^{N+1}}.
$$
On the other hand, Theorem \ref{derived series} for $i=1$ implies that 
$$(\Omega_{\infty})_{\textup{fib}}^2=\Omega_{\infty}^{(1-\text{Fr.})}.
$$
Therefore
$$((\Omega_{\infty})_{\textup{fib}}^2)^{(s-\text{Fr.})}=\Omega_{\infty}^{((s+1)-\text{Fr.})}.
$$
Applying once more Theorem \ref{derived series}, we conclude that the right hand side equals
$$(\Omega_{\infty})_{\textup{fib}}^{2^{s+1}}.
$$
Hence in total we get
$$((\Omega_{\infty})_{\textup{fib}}^{2^{s+1}})^{2^{N+1}}.
$$
Recalling that $N=i_{\text{max}}(\underline{a})$ and that $s+1=N+1+s-i_{\text{max}}(\underline{a})$, we have obtained precisely the desired statement.
\end{proof}

We conclude with the proof of Proposition \ref{prop:generatedbyinvolutions}.

\textbf{Proof of Proposition \ref{prop:generatedbyinvolutions}:}

\begin{proof}
First, suppose that $a_0=0$. By choosing sequences $(x_N)_{N \geq 1}$ of elements with $x_N \in L_N$ for $N \geq 1$, with the property that $x_i$ does not belong to the subtree $T_{\infty}x_j$ for $j<i$, we find that any vector $\underline{v}$ of $\mathbb{F}_2^{\mathbb{Z}_{\geq 0}}=\Omega_\infty^{\ab}$ with $v_0=0$ can be lifted to an involution of $\Omega_{\infty}$. From here we conclude immediately that if $a_0=0$, then there exists a closed subgroup
$$\Gamma \leq (M_{\underline{a}})_{\textup{fib}}^{h}
$$
topologically generated by involutions, such that any element of  $(M_{\underline{a}})_{\textup{fib}}^{h}$ can be represented as $\gamma \cdot y$ with $y$ in $([\Omega_{\infty},\Omega_{\infty}])^{h}$. Hence it suffices to show that $[\Omega_{\infty},\Omega_{\infty}]$ is generated by involutions. It follows immediately from Proposition \ref{derandcent in semidir} that $[\Omega_{\infty},\Omega_{\infty}]$ is an iterated semi-direct product, where at each step $N$ we have the $\mathbb{F}_2$-vector space $I_{\Omega_N}\cdot \mathbb{F}_2[L_N]$. Therefore $[\Omega_{\infty},\Omega_{\infty}]$ is certainly topologically generated by involutions.

Next, if $a_0=1$ and $i_{\max}(\underline{a})=0$ then $M_{\underline{a}}\cong\Omega_{\infty}^2$, that is topologically generated by involutions by what we said above.

Finally, Theorem \ref{Describing abelianized Ma} shows that whenever $a_0=1$ and $i_{\max}(\underline{a})>0$, the group $M_{\underline{a}}$ has a continuous epimorphism onto $\mathbb{Z}/4\mathbb{Z}$. Since any of the $h$ coordinate projections from $(M_{\underline{a}})_{\text{fib}}^h$ is a continuous epimorphism onto $M_{\underline{a}}$, it follows that also $(M_{\underline{a}})_{\text{fib}}^h$ surjects onto $\mathbb{Z}/4\mathbb{Z}$. Certainly a profinite group that has a continuous epimorphism onto $\mathbb{Z}/4\mathbb{Z}$ cannot be topologically generated by involutions.
\end{proof}

\subsection{End of proof of Theorem \ref{thmC}}
\begin{proof} In Section \ref{sbs:prop 2} we have established Proposition \ref{prop2} and Proposition \ref{prop:generatedbyinvolutions}. In Section \ref{sbs:from 2 to 1} we have proved that Proposition \ref{prop2} joint with Proposition \ref{prop:generatedbyinvolutions} imply \ref{prop1}. Hence Proposition \ref{prop1} holds. Finally, in Section \ref{sbs: from 1 to C} we have proved that Proposition \ref{prop1} implies Theorem \ref{thmC}. Therefore Theorem \ref{thmC} also holds, concluding the proof. 
\end{proof}

\bibliographystyle{plain}
\bibliography{bibliography}

\begin{thebibliography}{10}

\bibitem{anderson}
Jacqueline Anderson, Spencer Hamblen, Bjorn Poonen, and Laura Walton.
\newblock Local arboreal representations.
\newblock {\em Int. Math. Res. Not. IMRN}, (19):5974--5994, 2018.

\bibitem{magma}
Wieb Bosma, John Cannon, and Catherine Playoust.
\newblock The {M}agma algebra system. {I}. {T}he user language.
\newblock {\em J. Symbolic Comput.}, 24(3-4):235--265, 1997.
\newblock Computational algebra and number theory (London, 1993).

\bibitem{boston}
Nigel Boston and Rafe Jones.
\newblock The image of an arboreal {G}alois representation.
\newblock {\em Pure Appl. Math. Q.}, 5(1):213--225, 2009.

\bibitem{cremona}
J.~E. Cremona.
\newblock On the {G}alois groups of the iterates of {$x^2+1$}.
\newblock {\em Mathematika}, 36(2):259--261 (1990), 1989.

\bibitem{ferra}
Andrea Ferraguti.
\newblock The set of stable primes for polynomial sequences with large {G}alois
  group.
\newblock {\em Proc. Amer. Math. Soc.}, 146(7):2773--2784, 2018.

\bibitem{ferra3}
Andrea Ferraguti and Giacomo Micheli.
\newblock An equivariant isomorphism theorem for {$\mod \mathfrak p$}
  reductions of arboreal {G}alois representations.
\newblock {\em Trans. Amer. Math. Soc.}, 373(12):8525--8542, 2020.

\bibitem{ferra2}
Andrea Ferraguti, Giacomo Micheli, and Reto Schnyder.
\newblock Irreducible compositions of degree two polynomials over finite fields
  have regular structure.
\newblock {\em Q. J. Math.}, 69(3):1089--1099, 2018.

\bibitem{grigorchuk}
R.~Grigorchuk, Y.~Leonov, V.~Nekrashevych, and V.~Sushchansky.
\newblock Self-similar groups, automatic sequences, and unitriangular
  representations.
\newblock {\em Bull. Math. Sci.}, 6(2):231--285, 2016.

\bibitem{hindes3}
Wade Hindes.
\newblock Galois uniformity in quadratic dynamics over {$k(t)$}.
\newblock {\em J. Number Theory}, 148:372--383, 2015.

\bibitem{hindes1}
Wade Hindes.
\newblock The {V}ojta conjecture implies {G}alois rigidity in dynamical
  families.
\newblock {\em Proc. Amer. Math. Soc.}, 144(5):1931--1938, 2016.

\bibitem{hindes2}
Wade Hindes.
\newblock Classifying {G}alois groups of small iterates via rational points.
\newblock {\em Int. J. Number Theory}, 14(5):1403--1426, 2018.

\bibitem{ih}
Su-Ion Ih.
\newblock Height uniformity for algebraic points on curves.
\newblock {\em Compositio Math.}, 134(1):35--57, 2002.

\bibitem{ingram}
Patrick Ingram.
\newblock A finiteness result for post-critically finite polynomials.
\newblock {\em Int. Math. Res. Not. IMRN}, (3):524--543, 2012.

\bibitem{jones4}
Rafe Jones.
\newblock Iterated {G}alois towers, their associated martingales, and the
  {$p$}-adic {M}andelbrot set.
\newblock {\em Compos. Math.}, 143(5):1108--1126, 2007.

\bibitem{jones3}
Rafe Jones.
\newblock The density of prime divisors in the arithmetic dynamics of quadratic
  polynomials.
\newblock {\em J. Lond. Math. Soc. (2)}, 78(2):523--544, 2008.

\bibitem{jones2}
Rafe Jones.
\newblock Galois representations from pre-image trees: an arboreal survey.
\newblock In {\em Actes de la {C}onf\'{e}rence ``{T}h\'{e}orie des {N}ombres et
  {A}pplications''}, volume 2013 of {\em Publ. Math. Besan\c{c}on Alg\`ebre
  Th\'{e}orie Nr.}, pages 107--136. Presses Univ. Franche-Comt\'{e},
  Besan\c{c}on, 2013.

\bibitem{jones6}
Rafe Jones and Nigel Boston.
\newblock Settled polynomials over finite fields.
\newblock {\em Proc. Amer. Math. Soc.}, 140(6):1849--1863, 2012.

\bibitem{jones5}
Rafe Jones and Michelle Manes.
\newblock Galois theory of quadratic rational functions.
\newblock {\em Comment. Math. Helv.}, 89(1):173--213, 2014.

\bibitem{kaloujnine}
L\'{e}o Kaloujnine.
\newblock La structure des {$p$}-groupes de {S}ylow des groupes sym\'{e}triques
  finis.
\newblock {\em Ann. Sci. \'{E}cole Norm. Sup. (3)}, 65:239--276, 1948.

\bibitem{li}
Hua-Chieh Li.
\newblock Arboreal {G}alois representation for a certain type of quadratic
  polynomials.
\newblock {\em Arch. Math. (Basel)}, 114(3):265--269, 2020.

\bibitem{neukirch}
J\"{u}rgen Neukirch, Alexander Schmidt, and Kay Wingberg.
\newblock {\em Cohomology of number fields}, volume 323 of {\em Grundlehren der
  Mathematischen Wissenschaften [Fundamental Principles of Mathematical
  Sciences]}.
\newblock Springer-Verlag, Berlin, second edition, 2008.

\bibitem{odoni1}
R.~W.~K. Odoni.
\newblock The {G}alois theory of iterates and composites of polynomials.
\newblock {\em Proc. London Math. Soc. (3)}, 51(3):385--414, 1985.

\bibitem{odoni2}
R.~W.~K. Odoni.
\newblock On the prime divisors of the sequence {$w_{n+1}=1+w_1\cdots w_n$}.
\newblock {\em J. London Math. Soc. (2)}, 32(1):1--11, 1985.

\bibitem{serre}
Jean-Pierre Serre.
\newblock {\em Topics in {G}alois theory}, volume~1 of {\em Research Notes in
  Mathematics}.
\newblock A K Peters, Ltd., Wellesley, MA, second edition, 2008.
\newblock With notes by Henri Darmon.

\bibitem{silverman2}
Joseph~H. Silverman.
\newblock {\em The arithmetic of dynamical systems}, volume 241 of {\em
  Graduate Texts in Mathematics}.
\newblock Springer, New York, 2007.

\bibitem{silverman1}
Joseph~H. Silverman.
\newblock {\em The arithmetic of elliptic curves}, volume 106 of {\em Graduate
  Texts in Mathematics}.
\newblock Springer, Dordrecht, second edition, 2009.

\bibitem{Stoll}
Michael Stoll.
\newblock Galois groups over {${\bf Q}$} of some iterated polynomials.
\newblock {\em Arch. Math. (Basel)}, 59(3):239--244, 1992.

\bibitem{stoll2}
Michael Stoll.
\newblock Rational points on curves.
\newblock {\em J. Th\'{e}or. Nombres Bordeaux}, 23(1):257--277, 2011.

\end{thebibliography}

\end{document}